\documentclass{amsart}
\usepackage{amssymb}
\usepackage{amsmath}
\usepackage{amsfonts}
\usepackage{chngcntr}
\newtheorem{theorem}{Theorem}[section]

\newtheorem{prop}[theorem]{Proposition}

\newtheorem{remark}[theorem]{Remark}

\newenvironment{customthm}[1]
  {\innercustomthm}
  {\endinnercustomthm}

\renewcommand{\l}{\lambda}
\newcommand{\R}{\mathbb R}

\newcommand{\p}{\partial}
\newcommand{\al}{\alpha}

\newcommand{\tA}{{\tilde{A}}}

\begin{document}
\title[Large data global regularity for the Skyrme model]{Large data global regularity for the classical equivariant  Skyrme model}

\author{Dan-Andrei Geba and Manoussos G. Grillakis}

\address{Department of Mathematics, University of Rochester, Rochester, NY 14627, U.S.A.}
\email{dangeba@math.rochester.edu}
\address{Department of Mathematics, University of Maryland, College Park, MD 20742, U.S.A.}
\email{mggrlk@math.umd.edu}

\date{}

\begin{abstract}
This article is concerned with the large data global regularity for the equivariant case of the classical Skyrme model and proves that this is valid for initial data in $H^s\times H^{s-1}(\R^3)$ with $s>7/2$.
\end{abstract}

\subjclass[2000]{81T13}
\keywords{Skyrme model; global solutions.}

\maketitle


\section{Introduction}


\subsection{Statement of the problem and main result} 

One of the fundamental models in classical field theory is the \textit{Gell-Mann-L\'{e}vy model} \cite{GL}, also known as the \textit{classical nonlinear $\sigma$ model}, which is described by the action
\begin{equation*}
S\,=\,\int_{\R^{3+1}} \left\{ \frac 12 S^{\mu}_{\mu}\right\}\,dg,
\end{equation*}
where  $S_{\mu\nu}\,=\,h_{AB}\,\partial_\mu U^A \,\partial_\nu U^B$ is the pullback metric associated to a map $U: \R^{3+1} \to \mathbb{S}^3$,  $g=\text{diag}(-1,1,1,1)$  is the Minkowski metric, and $h$ is the induced Riemannian metric on $\mathbb{S}^3$ from the Euclidean one on $\mathbb{R}^4$. An important feature of this theory is that it does not admit \textit{static topological solitons} and this is precisely the motivation that led Tony Skyrme to introduce, in a series of seminal papers \cite{S1, S2, S3}, a physically relevant modification of the Gell-Mann-L\'{e}vy model, which no longer has this limitation. The static topological solitons of the Skyrme theory are known as \textit{skyrmions} and, from a historical perspective, they represent the first of their type to model a particle. The action corresponding to the \textit{Skyrme model} is given by
\begin{equation}
S\,=\,\int_{\R^{3+1}}  \left\{\frac 12 S^{\mu}_{\mu}+\, \frac{\alpha^2}{4} (S^{\mu}_{\mu}S^{\nu}_{\nu}-S^{\mu\nu} S_{\mu\nu})\right\}\,dg,
\label{S}
\end{equation} 
where $\al$ is a constant having the dimension of length. As the value of $\alpha$ does not play an important role in our arguments other than being positive, from here on in, we set it to $\alpha = 1$ in order to simplify the exposition. For a more comprehensive discussion of the physical descriptions and motivations for both the Gell-Mann-L\'{e}vy and Skyrme models, we refer the reader to our recent monograph \cite{GGr} and references therein.

Our focus in this article is on the Euler-Lagrange equations associated to the equivariant case of the Skyrme model; i.e., we work with formal critical points for \eqref{S} under the ansatz
\[
U(t,r,\omega)= (u(t,r),\omega),
\]
where
\[
g\,=\,-dt^2 + dr^2 + r^2 d\omega^2, \qquad h\,=\,du^2 + \sin^2 u\, d\omega^2,\]
are the previous metrics written in polar form. The relevant variational equation is the one for the azimuthal angle $u$,
\begin{equation}
\left(1+\frac{2\sin^2u}{r^2}\right)(u_{tt}-u_{rr})-\frac{2}{r}u_r+\frac{\sin2u}{r^2}\left(1+u_t^2-u_r^2+\frac{\sin^2u}{r^2}\right)=0, \label{main}
\end{equation}
which is of quasilinear type. A formal computation shows that solutions to this equation have the energy-type quantity
\begin{equation}
E[u](t)=\int_0^\infty \left\{\left(1 + \frac{2\sin ^2 u}{r^2}\right)\frac{u_t^2+u_r^2}{2}+
\frac{\sin ^2 u}{r^2} +\frac{\sin^4 u}{2r^4}
\right\}\,r^2 dr \label{tote}
\end{equation}
conserved in time. We are studying finite energy solutions, which necessarily satisfy 
\[
u(t,0)\,\equiv\, u(t,\infty)\,\equiv\,0\,(\text{mod}\,\pi).\]
The integer
\[
\frac{u(t,\infty)-u(t,0)}{\pi}
\]
is called the \textit{topological charge} of the map $U$ and, like the energy, it is also a conserved quantity. In what follows, we assume that 
\begin{equation}
u(t,0)=N_1\pi, \quad N_1\in \mathbb{N}, \qquad u(t,\infty)=0.
\label{bdry}
\end{equation}

The subsequent theorem is the main result of this paper. 
\begin{theorem}
Let $(u_0,u_1)$ be radial initial data with
\[
(u_0,u_1)\in H^s\times H^{s-1}(\R^3), \quad s>\frac{7}{2},
\]
which meets the compatibility conditions
 \[u_0(0)=N_1\pi, \qquad u_0(\infty)=u_1(0)=u_1(\infty)=0.\] 
Then there exists a global radial solution $u$ to the Cauchy problem associated to \eqref{main} with $(u(0),u_t(0))=(u_0,u_1)$, satisfying \eqref{bdry} and  
\[
u\in C([0,T], H^s(\R^3))\cap C^1([0,T], H^{s-1}(\R^3)), \qquad (\forall)\,T>0.
\] 
\label{main-th}
\end{theorem}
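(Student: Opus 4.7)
The plan is to prove the theorem by combining a local well-posedness statement in $H^s\times H^{s-1}$ with an a priori bound on the same norm, closing the loop through the standard continuation argument for quasilinear wave equations.

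First, I would handle the local theory. After dividing \eqref{main} by the strictly positive coefficient $1+2\sin^2u/r^2$, one obtains a quasilinear radial wave equation whose coefficients and nonlinearity are smooth functions of $u$ and of the regular quantity $\sin u/r$. Since $s>7/2$ ensures $H^{s-1}(\R^3)\hookrightarrow C^1$, a standard iteration scheme in the equivariant class, carefully set up to respect the boundary conditions \eqref{bdry} and the required regularity at the origin, yields a local solution on a maximal interval $[0,T_\ast)$, together with the blow-up criterion that if $T_\ast<\infty$ then the $H^s\times H^{s-1}$ norm of $(u,u_t)$ becomes unbounded as $t\uparrow T_\ast$.

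Next, conservation of \eqref{tote} must be leveraged to extract uniform pointwise bounds. Beyond the standard $\dot H^1$ control one obtains for the Gell-Mann-L\'evy model, the Skyrme-enhanced quantities
\[
\int_0^\infty\frac{\sin^2u}{r^2}(u_t^2+u_r^2)\,r^2\,dr\quad\text{and}\quad\int_0^\infty\frac{\sin^4u}{r^4}\,r^2\,dr
\]
stay uniformly bounded and, via weighted radial Sobolev inequalities, yield a uniform $L^\infty$ bound on $u(t,r)$ together with quantitative control on $\sin u/r$. This prevents the principal coefficient of the equation from degenerating and tames the singular factor $1/r^2$ appearing in the nonlinear terms.

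The core of the argument is the propagation of the full $H^s$ norm. I would commute the equation with radial derivatives (and, if necessary, with vector fields adapted to the wave operator), then derive a Gronwall-type inequality $\tfrac{d}{dt}E_s(t)\le A(t)\,E_s(t)$ for a suitable higher-order energy $E_s$, where $A(t)$ is controlled by the conserved and pointwise bounds above. The principal obstacle is the derivative nonlinearity $\sin(2u)(u_t^2-u_r^2)/r^2$, which sits at the same scaling as the top-order energy and cannot be closed by Cauchy-Schwarz without losing a derivative. The resolution should exploit the null-form-like structure of $u_t^2-u_r^2$ together with the weight $\sin(2u)/r^2$, absorbing the remainder through the extra coercivity of the Skyrme energy. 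Once such an $E_s$-estimate is in place, Gronwall on any $[0,T]$ rules out the blow-up indicated by the continuation criterion and produces the claimed global solution.
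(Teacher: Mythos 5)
Your high-level strategy — local theory plus a priori bound plus continuation — is the right skeleton, and your step of using conservation of \eqref{tote} with radial Sobolev embeddings to control $u$ and $\sin u/r$ pointwise is indeed a real ingredient in the paper. But the central step, where you want to close a Gronwall inequality $\tfrac{d}{dt}E_s\le A(t)E_s$ with $A(t)$ "controlled by the conserved and pointwise bounds above," has a genuine gap. For a quasilinear wave equation, commuting with $D^s$ produces commutator terms whose $L^2$ bound requires $L^\infty_x$ control of $\nabla_{t,x}u$ (or worse, of $\partial^2 u$) at each time; this is at the level of $H^{5/2+\epsilon}(\R^3)$, far above what the conserved energy \eqref{tote} gives. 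Your pointwise estimates control only $u$ and $\sin u/r$, not their gradients, so $A(t)$ is not actually bounded by quantities you have. The appeal to the null structure of $u_t^2-u_r^2$ doesn't rescue this: null forms pay off in dispersive/Strichartz estimates, not in raw energy commutator bounds, and "the extra coercivity of the Skyrme energy" gives integrated control of weighted $(\nabla u)^2/r^2$, which is $L^1_tL^1_x$-type information, not the $L^1_tL^\infty_x$ control of $\nabla u$ needed to close Gronwall. As written, your argument only re-derives the continuation criterion; it does not verify it.

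The paper bypasses precisely this obstruction by two successive transformations. First, $u=rv+\varphi$ converts \eqref{main} into a $5+1$-dimensional radial semilinear-looking equation \eqref{main-v} for $v$, for which Hörmander's continuation criterion reduces global existence to a weighted $L^\infty$ bound on $v$ and $\nabla v$ (Theorem \ref{main-th-v-2}). Second, and crucially, the authors integrate against the square root of the principal coefficient to form $\Phi$ (via $\Phi_1=\int^u_{N_1\pi}(1+2\sin^2 w/r^2)^{1/2}\,dw$ and its descendants \eqref{Phi-final}); a computation shows the resulting wave equation \eqref{Box-Phi} for $\Phi$ has \emph{no derivative nonlinearity at all}. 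This turns the quasilinear problem into a genuinely semilinear one for $\Phi$, which is then controlled by a bootstrap: energy estimates at $H^1$, then Strichartz estimates \eqref{Str-gen} (particularly $L^2_tL^5_x$) to upgrade to $H^2$ and $H^3$, and finally fractional energy/Kato--Ponce arguments to reach $H^s$. The resulting $L^\infty H^s$ control of $\Phi$ is then unwound, through the explicit relations \eqref{vr-phir}--\eqref{vt-phit}, into the needed $L^\infty$ bound on $(1+r)(|v|+|\nabla v|)$. You would need to supply a device playing the role of this $\Phi$-transformation (or a fundamentally different mechanism for bounding $\|\nabla u\|_{L^1_tL^\infty_x}$) to make your Gronwall step work.
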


\begin{remark}
It is important to compare this result with what is known for the corresponding equation of the Gell-Mann-L\'{e}vy model, i.e.,
\begin{equation*}
u_{tt}-u_{rr}-\frac{2}{r}u_r+\frac{\sin2u}{r^2}=0,
\end{equation*}
which is of semilinear type and looks considerably simpler than \eqref{main}. Surprisingly, Shatah \cite{S88} showed that there are smooth data that lead to solutions of this equation that blow up in finite time, with Turok and Spergel \cite{PhysRevLett.64.2736} later finding such solutions in closed form. 
\end{remark}

\subsection{Comments on previous relevant works and comparison to main result} 
Since it first appeared, the Skyrme model has received considerable interest from both the mathematics and physics communities, with comprehensive lists of references being available in the book by Manton and Sutcliffe \cite{MS} and in our monograph \cite{GGr}. In here, we strictly focus on a number of works that better correlate to ours. 

For the static problem, we start by mentioning the proof for the existence and uniqueness of skyrmions with arbitrary topologic charges, which is due to Kapitanski\u{i} and Ladyzhenskaya \cite{KL} and is based on variational techniques. An alternative approach relying strictly on ODE type methods was formulated later by McLeod and Troy \cite{LT}. The asymptotic stability of the skyrmion with unit topologic charge was numerically investigated by Bizo\'{n}, Chmaj, and Rostworowski \cite{BCR07}, while, just recently, Creek, Donninger, Schlag, and Snelson \cite{CDSS16} proved rigorously its linear stability.

In what concerns the non-static problem, the Euler-Lagrange equations, for both the general and equivariant cases of the Skyrme model, have been the focus of quite a few studies in recent years. As these are evolution equations, one natural question to study about them is the well-posedness of the associated initial value problem. This is a very challenging task, mainly due to the quasilinear nature of the equation (displayed above by \eqref{main}) and the fact that scaling heuristics in a small energy scenario show that the Cauchy problem is supercritical with respect to the energy (for more details, see \cite{GNR}). 

For the general case, Wong \cite{W11} displayed regimes in which the problem is regularly hyperbolic and, consequently, is locally well-posed for almost stationary initial data. The same paper also showcased frameworks which lead to an ultrahyperbolic-type breakdown of hyperbolicity and, thus, to ill-posedness. In the equivariant case, Geba, Nakanishi, and Rajeev \cite{GNR} proved global well-posedness and scattering for the Cauchy problem associated to \eqref{main}, when $N_1=0$ and the initial data have a small Besov-Sobolev norm at the level of $H^{5/2}(\R^3)$. 

However, the most relevant work to the current paper, which also served as one of its sources of inspiration, is due to Li \cite{Li}. His main result is a proof of Theorem \ref{main-th} in the more restrictive setting $s\geq 4$. Our argument follows the general framework of \cite{Li}; nevertheless, certain key parts have been reworked and streamlined and the main argument is now transparent (e.g., Propositions \ref{prop-Phit-h1} and \ref{prop-Phitt-h1}). Additionally, we have a comprehensive appendix that discusses the regularity of the initial data, which is an important element of the proof. A difference of the present work with \cite{Li} is in the last step of our argument, where our approach is able to handle fractional derivatives. A final remark has to do with our belief that Theorem \ref{main-th}  is optimal with regard to the spaces used in its proof. However, we do not pursue this issue in the present work.

A parallel literature exists on other Skyrme-like theories, like the Faddeev and Adkins-Nappi models, for which we ask the interested reader to consult, yet again, our book \cite{GGr}.  


\subsection{Outline of the paper}

In section 2, we reformulate our result in terms of a newly introduced function $v$ and reduce the argument for Theorem \ref{main-th} to the verification of a continuation criterion for $v$. Subsequently, we construct another auxiliary function $\Phi$, which is intimately tied to $v$, but is more amenable to the methods we have in mind. Also here, we reduce once more the argument to the proof of the finiteness of certain Sobolev norms for derivatives of $\Phi$. Section 3 is devoted to setting up the necessary notational conventions and to collecting the analytic tools used throughout the paper. In section 4, we start in earnest our proof and show that both $v$ and $\Phi$ satisfy energy-type estimates, which lead to fixed-time decay bounds. Sections 5 and 6 are dedicated to upgrading these inequalities to match Sobolev regularities at the level of $H^2$ and $H^3$, respectively. In section 7, we conclude the analysis by showing that $\Phi$ is regular enough to imply that the continuation criterion for $v$ is valid. The article finishes with an appendix which certifies that the Sobolev regularity of the initial data assumed in every part of the main argument is indeed the right one.


\subsection*{Acknowledgements}

The first author was supported in part by a grant from the Simons Foundation $\#$ 359727.


\section{Preliminaries}


\subsection{Introducing the function $v$ and initial reductions}

We start by writing the equation \eqref{main} in the form
\begin{equation}
\Box_{3+1}u= N(r,u,\nabla u), \label{main-2}
\end{equation}
where
\[
N(r,u,\nabla u):=\frac{-\frac{\sin2u}{r^2}\left(1+u_t^2-u_r^2+\frac{\sin^2u}{r^2}\right)}{1 + \frac{2\sin ^2 u}{r^2}}-\frac{\frac{4\sin^2u}{r^3}u_r}{1 + \frac{2\sin ^2 u}{r^2}}
\]
and
\[
\Box_{3+1}=\p_{tt}-\p_{rr}-\frac{2}{r}\p_r
\]
is the radial wave operator in $\R^{3+1}$. We perform the substitution
\begin{equation}
u(t,r)\,=\,r\,v(t,r)+\varphi(r),
\label{utov}
\end{equation}
with $\varphi:\R_+\to\R_+$ being a smooth, decreasing function, verifying $\varphi\equiv N_1\pi$ on $[0,1]$ and 
$\varphi\equiv 0$ on $[2,\infty)$. We also need to introduce a finer version of $\varphi$, labelled $\varphi_{<1}$, which shares the same smoothness and monotonicity with $\varphi$, but now satisfies $\varphi_{<1}\equiv 1$ on $[0,1/2]$ and $\varphi_{<1}\equiv 0$ on $[1,\infty)$. Furthermore, we write $\varphi_{>1}$ to denote the function $1-\varphi_{<1}$. As a consequence, we obtain
\begin{equation}
\aligned
\Box_{5+1}v=&\frac{1}{r}\Delta_3\varphi+\frac{1}{r}\varphi_{>1}N(r, rv+\varphi, \nabla(rv+\varphi))+\frac{2}{r^2}\varphi_{>1}v\\
&+\varphi_{<1}\left(\frac{1}{r}N(r, rv, \nabla(rv))+\frac{2}{r^2}v\right).
\endaligned
\label{main-v}
\end{equation}
A careful analysis shows that 
\begin{equation}
\aligned
&\frac{1}{r}N(r, rv, \nabla(rv))+\frac{2}{r^2}v\\
&\qquad\qquad=\frac{1}{1+N_0(rv)v^2}\bigg\{N_1(rv)v^3+N_2(rv)v^5+N_3(rv)v(v_t^2-v_r^2)\\
&\qquad\qquad\qquad\qquad\qquad\qquad+N_4(rv)rv^4v_r\bigg\},
\endaligned
\label{Nv-l1}
\end{equation}
with all $N_i=N_i(x)$ being even, analytic, and satisfying
\begin{equation}
\|\partial^kN_i\|_{L^\infty(\R)}\leq C_k, \qquad (\forall)\,k\in\mathbb{N}.
\label{dni-li}
\end{equation}
We refer the reader to \cite{Li} for the precise formulae of these functions.

The previous substitution is motivated by the fact that  it reduces the proof of Theorem \ref{main-th} to the one of the subsequent result concerning \eqref{main-v}, which could be verified through a fairly straightforward argument (e.g., see Subsection 2.3 in Creek \cite{C13}).

\begin{theorem}
Let $(v_0,v_1)$ be radial initial data with
\[
(v_0,v_1)\in H^s\times H^{s-1}(\R^5), \quad s>\frac{7}{2}.
\]
Then there exists a global radial solution $v$ to the Cauchy problem associated to \eqref{main-v} with $(v(0),v_t(0))=(v_0,v_1)$, which satisfies 
\[
v\in C([0,T], H^s(\R^5))\cap C^1([0,T], H^{s-1}(\R^5)), \qquad \forall\,T>0.
\] 
\label{main-th-v}
\end{theorem}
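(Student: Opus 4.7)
The plan is to follow the standard paradigm for global well-posedness of nonlinear wave equations: establish local existence in $H^s\times H^{s-1}(\R^5)$ together with a continuation criterion at a lower level of regularity, and then prove a priori bounds that render the continuation criterion globally valid on any finite interval. Since the nonlinear terms on the right-hand side of \eqref{main-v}--\eqref{Nv-l1} contain at most quadratic expressions in $\nabla v$ multiplied by $v$, together with polynomial self-interactions of order up to five and the smooth inhomogeneous term $r^{-1}\Delta_3 \varphi$, and since the Sobolev embedding $H^{s-1}(\R^5)\hookrightarrow L^\infty$ holds for $s-1>5/2$, the assumption $s>7/2$ guarantees that both $v$ and $\nabla v$ lie in $L^\infty$. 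Standard energy estimates for semilinear wave equations with quadratic derivative nonlinearity then yield a unique local solution in $C([0,T^*),H^s)\cap C^1([0,T^*),H^{s-1})$, and the usual blow-up argument gives the continuation criterion that the solution persists as long as some intermediate Sobolev norm $\|v(t)\|_{H^{s_0}}$ with $s_0\in(7/2,s]$ stays bounded.

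The core of the argument is a priori control of such a norm on an arbitrary compact interval $[0,T]$. I would begin with the conserved energy \eqref{tote}, which after the substitution \eqref{utov} translates into a uniform bound on an $H^1$-type norm of $v$ together with a positive weighted quartic contribution. This is much weaker than what is ultimately needed, so additional structure must be exploited. The next step is to introduce the auxiliary field $\Phi$ announced in the outline, designed so that the most dangerous quadratic derivative interaction $N_3(rv)v(v_t^2-v_r^2)$ from \eqref{Nv-l1} becomes amenable to energy methods; fixed-time decay bounds derived from weighted radial Sobolev embeddings in $\R^5$, combined with energy identities for $\Phi$, $\Phi_t$, and $\Phi_{tt}$, then provide the baseline estimates at the $H^1$ level.

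With the baseline in place, I would bootstrap regularity in stages: upgrade the $H^1$ control of $v$ and $\Phi$ to $H^2$, then to $H^3$, by differentiating \eqref{main-v} in $t$ and $r$ and applying Gronwall's inequality to the resulting energy identities, each step using only the norms controlled at the previous stage. A final fractional-derivative step then closes the argument up to $H^s$ for any $s>7/2$ by means of Kato--Ponce and Moser commutator inequalities; the analytic composition terms $N_i(rv)$ are handled cleanly using the uniform derivative bounds \eqref{dni-li}.

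The step I expect to be hardest is the control of the quadratic derivative nonlinearity $N_3(rv)v(v_t^2-v_r^2)$, which sits at the borderline of what standard energy methods can handle: two derivatives on the right balance the two on the left, so a naive Gronwall argument would require $\nabla v\in L^1_t L^\infty_x$, which is not supplied by the energy bound alone. The purpose of the auxiliary $\Phi$ is precisely to recast this term into an expression whose energy identity carries a favorable sign or a gain of a derivative, and implementing that reduction at each differentiation level is the technical heart of the argument. A secondary complication is the bookkeeping associated with the cutoffs $\varphi_{<1}$ and $\varphi_{>1}$ in \eqref{main-v}, which separate the behavior near the origin (where the topological charge concentrates) from that at infinity; one must verify that all commutators produced by these cutoffs are absorbed by Sobolev norms already under control.
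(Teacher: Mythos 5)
Your proposal captures the high-level architecture of the paper's argument correctly: reduce Theorem \ref{main-th-v} to a continuation criterion via the standard quasilinear local theory (the paper invokes H\"ormander's Theorem 6.4.11), identify $N_3(rv)v(v_t^2-v_r^2)$ as the dangerous quadratic derivative interaction, introduce the auxiliary field $\Phi$ so that its wave equation contains no derivatives of $v$, and then bootstrap from the $H^1$ energy level upward. You also correctly flag the decay bounds from radial Sobolev embeddings as an ingredient and the cutoff bookkeeping as a secondary nuisance.

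There are, however, two concrete gaps. First, you propose to perform the higher-order bootstrap ``by differentiating \eqref{main-v} in $t$ and $r$.'' This undoes the entire point of introducing $\Phi$: differentiating \eqref{main-v} reproduces the quadratic derivative nonlinearity at one order higher, which is precisely what $\Phi$ was designed to avoid. In the paper the bootstrap is carried out entirely at the level of $\Phi$: one differentiates \eqref{Box-Phi} in time to obtain the wave equations \eqref{Box-Phit} and \eqref{Box-Phitt} for $\Phi_t$ and $\Phi_{tt}$, whose right-hand sides involve only $\tA(r,v)-1$ (no $\nabla v$), then recovers spatial regularity via $\Delta\Phi=\Phi_{tt}-\Box\Phi$ rather than by differentiating in $r$. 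Second, replacing the paper's Strichartz analysis by ``Gronwall's inequality applied to energy identities'' does not close the estimates at the $H^2$ and $H^3$ stages. The obstruction is that at those stages the coefficient $\tA(r,v)-1$ in \eqref{Box-Phit} is \emph{not} in $L^\infty_{t,x}$ near $r=0$ (the decay estimates \eqref{decay-A} and \eqref{decay-A-2} still blow up as $r\to 0$); the only integrability one has from the conserved energy is $\|\tA(r,v)-1\|_{L^\infty L^{10/3}}\lesssim 1$ as in \eqref{ta-103}. To pair this against $\Phi_t$ in $L^1L^2$ and absorb on a short time interval, one genuinely needs a mixed-norm gain $\|\Phi_t\|_{L^2L^5}$, which is a Strichartz norm in \eqref{Str-gen} and is not available from the $L^\infty\dot{H}^1$ energy norm alone (Sobolev from $H^1$ in $\R^5$ only reaches $L^{10/3}$). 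Only after the $H^3$ stage does the improved decay \eqref{decay-A-3} put $\tA(r,v)-1$ in $L^\infty_{t,x}$, at which point a purely energy-based argument (the generalized energy estimate \eqref{en-hs}) suffices for the final fractional step, as in Proposition \ref{prop-phit}. So the Strichartz inequalities are not an optional refinement but the mechanism that makes the first two bootstrap steps work.
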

\noindent The approach in proving this theorem relies on a classical result (e.g., see Theorem 6.4.11 in H\"{o}rmander \cite{H97})) that allows us to derive global solutions from local ones, which additionally satisfy a continuation criterion. The entire argument is then reduced to demonstrating the following theorem.

\begin{theorem}
For any $0<T<\infty$ and $s>7/2$, a radial solution $v$ on $[0,T)$ to \eqref{main-v} with $(v(0),v_t(0))\in H^s\times H^{s-1}(\R^5)$  satisfies 
\begin{equation}
\|(1+r)(|v|+|\nabla_{t,x}v|)\|_{L_{t,x}^\infty([0,T)\times \R^5)}\,<\,\infty.
\label{livr}
\end{equation}
\label{main-th-v-2}
\end{theorem}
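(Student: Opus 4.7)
The plan is to establish \eqref{livr} by combining energy-type estimates for $v$ and for the auxiliary function $\Phi$ alluded to in the introduction, and then transferring the resulting Sobolev control into a pointwise bound via radial Sobolev embeddings in $\R^5$. Since $v$ solves a wave equation on $\R^{5+1}$ whose most sensitive part near the origin has the explicit, analytic structure \eqref{Nv-l1}--\eqref{dni-li}, the strategy is to bootstrap regularity from the energy level up through $H^2$ and $H^3$, at which point a radial Sobolev inequality of the form
\[
(1+r)\bigl(|f(r)|+|\nabla f(r)|\bigr)\,\lesssim\, \|f\|_{H^s_{\text{rad}}(\R^5)},\qquad s>\tfrac{7}{2},
\]
converts the $H^3$-type bounds on $v$ and $\nabla_{t,x}v$ into \eqref{livr}.

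First I would establish a baseline energy estimate: multiplying \eqref{main-v} by $v_t$ and integrating, and using the coercivity inherited from the conserved Skyrme energy \eqref{tote} under the substitution \eqref{utov}, should yield an a priori bound on $v$ in $L^\infty_t H^1_x([0,T)\times\R^5)$ together with control of the weighted quantities associated to $\sin^2 u/r^2$ and $\sin^4 u/r^4$. Feeding this into the radial Sobolev embedding already provides a pointwise fixed-time decay bound for $v$, which is enough to treat the lower-order nonlinearities later but falls well short of \eqref{livr}.

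Next I would introduce the auxiliary function $\Phi$, likely a transform of $v$ chosen so that its wave equation has a right-hand side amenable to the standard energy method, with the singular $1/r^2$ factors in the $\varphi_{<1}$ region and the quasilinear principal part in the $\varphi_{>1}$ region repackaged as harmless or semilinear contributions. I would then prove energy inequalities for $\Phi$, $\Phi_t$ and $\Phi_{tt}$—the latter two being Propositions \ref{prop-Phit-h1} and \ref{prop-Phitt-h1} advertised in the introduction—relying on Moser-type product estimates for the $N_i(rv)$ factors (whose smoothness follows from \eqref{dni-li}), on the $H^1$ bound from step one, and on a Gr\"onwall closure on every $[0,T']$ with $T'<T$. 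This should deliver successively $H^2$ and $H^3$-type control. A final fractional step—absent from \cite{Li}—would bridge the gap between $H^3$ and $H^s$ for $s>7/2$.

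The main obstacle, I expect, is that final passage from integer to fractional regularity, combined with the need to accommodate the full quasilinear character of the Skyrme nonlinearity on the $\varphi_{>1}$ region and the singular $1/r^2$ coefficients on the $\varphi_{<1}$ region. The singularity at $r=0$ must be absorbed by the vanishing of $v$ at the origin encoded in the $\R^5$-radial framework; and applying fractional derivatives to the principal coefficient $1+2\sin^2 u/r^2$ produces commutator errors that close only under the very weighted pointwise decay one is trying to prove. This is precisely why the argument must be organised as a bootstrap rather than a one-shot estimate, and why the appendix certifying the correct Sobolev regularity of the initial data is an essential companion to the main argument.
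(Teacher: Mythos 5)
Your outline correctly identifies the skeleton the paper uses: a baseline $L^\infty_t H^1_x$ bound for $v$ and for an auxiliary function $\Phi$, a bootstrap up through $H^2$ and $H^3$, a final fractional upgrade to $H^s$ for $s$ just above $7/2$, and then radial Sobolev embedding to extract the pointwise weighted decay \eqref{livr}. You also correctly anticipate that the last fractional step is the place where the argument departs from \cite{Li}.

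However, there is a concrete gap in the mechanism you propose for the $H^2$ and $H^3$ upgrades. You suggest closing those steps by ``energy inequalities $\ldots$ and a Gr\"onwall closure,'' i.e.\ by the standard $\frac{d}{dt}\|\Phi_t\|_{\dot H^1}^2 \lesssim \|\Box\Phi_t\|_{L^2}\|\Phi_t\|_{\dot H^1}$ route. That cannot close at this level of regularity. From \eqref{Box-Phit} one has $\Box\Phi_t = (\tA-1)\bigl(\tfrac32+\tfrac12\tA^{-2}\bigr)\Phi_t$, and at the $H^1$ stage the only available fixed-time control on $\tA-1$ is $\tA-1\in L^{10/3}(\R^5)$ (via \eqref{ta-103}) together with the weak pointwise bound \eqref{decay-A} which blows up like $r^{-3/2}$ near the origin. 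So $\tA-1\notin L^\infty$, and an energy/Gr\"onwall estimate asking for $\|(\tA-1)\Phi_t\|_{L^2}\lesssim\|\Phi_t\|_{L^2}$ simply fails. The paper instead applies the $5+1$-dimensional Strichartz estimates \eqref{Str-gen} with $(p,q)=(2,5)$, so that $\|\Box\Phi_t\|_{L^1L^2}\lesssim\|\tA-1\|_{L^2L^{10/3}}\|\Phi_t\|_{L^2L^5}$, and then iterates over subintervals of length $T_1\sim 1$ using a smallness-of-$|I|^{1/2}$ absorption argument (this is the actual closure, not Gr\"onwall). The improved pointwise decay of $\tA-1$ that would let a pure energy estimate close only becomes available after the $H^2$ bound is proved, so the order of reasoning matters. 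The same Strichartz mechanism reappears verbatim for $\Phi_{tt}$ in Proposition~\ref{prop-Phitt-h1}. Your instinct that the fractional step should rely on the generalized energy estimate together with Kato--Ponce/fractional Leibniz and interpolation is, in contrast, accurate: that is indeed how the paper passes from $H^3$ to $H^s$, and at that stage the strong decay \eqref{decay-A-3} does make a pure energy estimate viable. One further omission: the construction of $\Phi$ is not a generic normalization but the explicit transform \eqref{Phi-final}, chosen precisely so that \eqref{Box-Phi} is semilinear with \emph{no} derivative terms and no $1/r^2$ singularity on the right-hand side; without specifying a transform with those two properties, the asymptotics \eqref{rll1}--\eqref{rgrt1} and hence the Strichartz step would not go through.
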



\subsection{The construction of the auxiliary function $\Phi$ and further reductions}

The proof of Theorem \ref{main-th-v-2} is somewhat indirect, in the sense that we argue for \eqref{livr} by constructing a new function $\Phi$, which satisfies an equation that is easier to study than the one for $v$ (i.e., \eqref{main-v}). The first step in this construction aims to eliminate the derivative terms on the right-hand side of \eqref{main-v} and, for that purpose, we take
\[
\Phi_1(t,r)=\int^{u(t,r)}_{N_1\pi}\left(1+\frac{2\sin^2w}{r^2}\right)^{1/2}dw.
\]
The wave equation satisfied by $\Phi_1$ is given by
\[
\Box_{3+1}\Phi_1=-\frac{2}{r^2}\Phi_1\,+\,\frac{1}{2}\int^{u}_{N_1\pi}\left\{3(A^{3/2}-A^{1/2})+A^{-1/2}-A^{-3/2}\right\}dw,
\] 
with
\[
A=A(r,w):=1+\frac{2\sin^2w}{r^2}.
\]

To deal with the $1/r^2$ singularity, we introduce next
\[
\Phi_2(t,r)=\frac{\Phi_1(t,r)}{r},
\]
which solves
\[
\Box_{5+1}\Phi_2=-\frac{3}{2}\Phi_2\,+\,\frac{1}{2r}\int^{u}_{N_1\pi}\left\{3A^{3/2}+A^{-1/2}-A^{-3/2}\right\}dw.
\]
It seems that we took care of the singularity only to introduce a new one in front of the integral. However, this can be seen to be removable by writing
\[
\frac{1}{2r}=\frac{\varphi_{<1}}{2r}+\frac{\varphi_{>1}}{2r} 
\]
and then making the  change of variable 
\[
w=N_1\pi+ry
\] 
in the integral multiplied by $\varphi_{<1}$. Even if the equation is fixed, a formal argument shows that one might have
\[
\|\Phi_2\|_{L^2(\{r\geq 1\})}=\infty,
\]
which does not fit the expected results of our approach.

To address this final issue, we take
\[
\Phi=\Phi_2\,+\,\frac{1}{3r}\varphi_{>1}\int_{0}^{N_1\pi}\left\{3A^{3/2}+A^{-1/2}-A^{-3/2}\right\}dw.
\]
After careful computations, we deduce that
\begin{equation}
\Phi\,=\,\int_{0}^{v}\left(1+\frac{2\sin^2(ry+\varphi)}{r^2}\right)^{1/2}dy\,+\,\frac{\varphi_{\geq 1/2}}{r^3},
\label{Phi-final}
\end{equation}
with the associated wave equation being
\begin{equation}
\Box_{5+1}\Phi=-\frac{3}{2}\Phi\,+\,\frac{1}{2}\int_{0}^{v}\left\{3\tA^{3/2}+\tA^{-1/2}-\tA^{-3/2}\right\}dy\,+\,\frac{\varphi_{\geq 1/2}}{r^3}.
\label{Box-Phi}
\end{equation}
Above, 
\begin{equation}
\tA=\tA(r,y):=1+\frac{2\sin^2(ry+\varphi(r))}{r^2}
\label{ta-formula}
\end{equation}
and $\varphi_{\geq 1/2}=\varphi_{\geq 1/2}(r)$ is a generic smooth function, with bounded derivatives of all orders and supported in the domain $\{r\geq 1/2\}$, which may change from line to line.

In order to prove Theorem \ref{main-th-v-2}, it is obvious from its formulation that we can additionally assume that $s$ is sufficiently close to $7/2$. In fact, we show that if $7/2<s<18/5$, then
\begin{equation}
\aligned
\|\Phi\|_{L^\infty H^{s}([0,T)\times \R^5)}&+\|\Phi_t\|_{L^\infty H^{s-1}([0,T)\times \R^5)}+\|\Phi_{tt}\|_{L^\infty H^{s-2}([0,T)\times \R^5)}\\
&+\|\Phi_{ttt}\|_{L^\infty L^2([0,T)\times \R^5)}\,<\,\infty,
\endaligned
\label{Phi-li}
\end{equation}
which, coupled with Sobolev embeddings and radial Sobolev inequalities, implies \eqref{livr}.


\section{Notations and analytic toolbox}


\subsection{Notational conventions}

First, we write $A\lesssim B$ to denote $A\leq CB$, where $C$ is a constant depending only upon parameters which are considered fixed throughout the paper. Two such important parameters are the conserved energy \eqref{tote}, written in terms of the initial data $(u_0, u_1)$ in Theorem \ref{main-th} as
\begin{equation}
E:=\int_0^\infty \left\{\left(1 + \frac{2\sin ^2 u_0}{r^2}\right)\frac{u_1^2+u_{0,r}^2}{2}+
\frac{\sin ^2 u_0}{r^2} +\frac{\sin^4 u_0}{2r^4}
\right\}r^2 dr,
\label{tote-0}
\end{equation}
and the time $0<T<\infty$ appearing in Theorem \ref{main-th-v-2}. Moreover, we write $A\sim B$ for the case when both $A\lesssim B$ and $B\lesssim A$ are valid.

Secondly, as is the custom for $w=w(t,x)$, we work with $\nabla w=(\partial_t w, \nabla_x w)$ and
\begin{equation*}
\|w\|_{L^pX(I\times \R^n)}=\|w\|_{L_t^pX_x(I\times \R^n)}= \left(\int_I\|w(t,\cdot)\|_{X(\R^n)}^{p}dt\right)^{1/p},
\end{equation*}
where $X(\R^n)$ is a normed/semi-normed space (e.g., $X=L^q$ or $H^\sigma$ or $\dot{H}^\sigma$) and $I\subseteq \R$ is an arbitrary time interval. Furthermore, for ease of notation, in the case when $I\times \R^n=[0,T)\times \R^5$, we drop from the previous notation the dependence on the domain and simply write
\begin{equation*}
\|w\|_{L^pX}=\|w\|_{L^pX([0,T)\times \R^5)}.
\end{equation*}
This is because the majority of the norms we are dealing with from here on out refers to this particular situation.


\subsection{Analytic toolbox}

Here, we list a number of analytic facts that we will use throughout the argument. First, we recall the classical and general Sobolev embeddings
\begin{align}
H^\sigma(\R^n)\subset L^\infty(\R^n),\qquad \sigma>\frac{n}{2}&, \label{Sob-classic}\\
\dot{H}^{\sigma,p}(\R^n)\subset L^q(\R^n),\qquad 1<p\leq q<\infty, \quad &\sigma=n\left(\frac{1}{p}-\frac{1}{q}\right),  \label{Sob-gen}
\end{align}
and the radial Sobolev estimates (\cite{St77}, \cite{CO09})
\begin{align}
r^{n/2-\sigma}|&f(r)|\lesssim \|f\|_{\dot{H}^\sigma(\R^n)},\qquad \frac{1}{2}<\sigma<\frac{n}{2}, \label{rad-Sob-1}\\
&r^{(n-1)/2}|f(r)|\lesssim \|f\|_{H^1(\R^n)},  \label{rad-Sob-2}
\end{align}
which are valid for radial functions defined on $\R^n$. Related to these, we write down Hardy's inequality (\cite{OK90})
\begin{equation}
\left\|\frac{g}{|x|}\right\|_{L^p(\R^n)}\lesssim \left\|\nabla_x g\right\|_{L^p(\R^n)}, \qquad 1<p<n, \label{Hardy}
\end{equation}
and the interpolation bound (\cite{MR0482275})
\begin{equation}\aligned
&\begin{cases}
\sigma_1\neq \sigma_2, \qquad 1\leq p,p_1,p_2\leq \infty, \qquad 0\leq \theta \leq 1,\\ 
\sigma=(1-\theta)\sigma_1+\theta\sigma_2,\qquad \frac{1}{p}=\frac{1-\theta}{p_1}+\frac{\theta}{p_2},
\end{cases}\\
&\qquad\|g\|_{\dot{H}^{\sigma,p}(\R^n)}\lesssim \|g\|^{1-\theta}_{\dot{H}^{\sigma_1,p_1}(\R^n)}\|g\|^\theta_{\dot{H}^{\sigma_2,p_2}(\R^n)},
\endaligned
\label{interpol-bd}
\end{equation}
both of which hold true for general functions on $\R^n$.

Next, using the Riesz potential $D^\sigma=(-\Delta)^{\sigma/2}$, we record the fractional Leibniz estimate (\cite{GO-14}, \cite{BLi-14})
\begin{equation}
\aligned
&\sigma>0, \quad 1\leq p\leq \infty, \quad 1< p_1, p_2, q_1, q_2\leq \infty, \quad\frac{1}{p}=\frac{1}{p_1}+\frac{1}{q_1}=\frac{1}{p_2}+\frac{1}{q_2}, \\
&\,\|D^\sigma(fg)\|_{L^p(\R^n)}\lesssim \|D^\sigma f\|_{L^{p_1}(\R^n)}\|g\|_{L^{q_1}(\R^n)}+\|f\|_{L^{p_2}(\R^n)}\|D^\sigma g\|_{L^{q_2}(\R^n)}, 
\endaligned
\label{Lbnz-0}
\end{equation}
and the Kato-Ponce type inequalities (\cite{Li-16})
\begin{equation}
\aligned
0<\sigma<2, \quad 1< p, p_1, p_2 &<\infty, \quad \frac{1}{p}=\frac{1}{p_1}+\frac{1}{p_2},\\
\|D^\sigma(fg)-D^\sigma f\, g-f\,D^\sigma g\|_{L^p(\R^n)}&\lesssim \|D^{\sigma/2} f\|_{L^{p_1}(\R^n)}\|D^{\sigma/2}g\|_{L^{p_2}(\R^n)}, 
\endaligned
\label{Lbnz-1}
\end{equation}
\begin{equation}
\aligned
&0<\sigma\leq 1, \qquad 1< p <\infty,\\
\|D^\sigma(fg)-f\,&D^\sigma g\|_{L^p(\R^n)}\lesssim \|D^{\sigma} f\|_{L^{p}(\R^n)}\|g\|_{L^{\infty}(\R^n)}.
\endaligned
\label{Lbnz-2}
\end{equation}
We also mention the well-known Moser bound
\begin{equation}
\|F(f)\|_{H^\sigma(\R^n)}\leq \gamma(\|f\|_{L^{\infty}(\R^n)})\,\|f\|_{H^{\sigma}(\R^n)}, \qquad(\forall)\,f\in L^{\infty}\cap H^\sigma(\R^n; \R^k), 
\label{Moser}
\end{equation}
where $F\in C^\infty(\R^k;\R)$, $F(0)=0$, and $\gamma=\gamma(\sigma)\in C(\R; \R)$. Following this, we recall the Bernstein estimates
\begin{equation}
\aligned
\|P_{>\l}f\|&_{L^p(\R^n)}\lesssim \|f\|_{L^p(\R^n)},\\ \l^\sigma\|P_{>\l}f\|_{L^p(\R^n)}\lesssim& \,\|P_{>\l}D^\sigma f\|_{L^p(\R^n)}, \qquad \sigma\geq 0,
\endaligned
\label{Bernstein}
\end{equation}
where $P_{>\l}$ is a Fourier multiplier localizing the spatial frequencies to the region $\{|\xi|>\l\}$.

Finally, we recount the classical Strichartz inequalities for the $5+1$-dimensional linear wave equation, which take the form
\begin{equation}
\aligned
\|\Psi\|&_{L^pL^q(I\times \R^5)}+\|\Psi\|_{L^\infty\dot{H}^\sigma(I\times \R^5)}+\|\Psi_{t}\|_{L^\infty\dot{H}^{\sigma-1}(I\times \R^5)}\\
&\qquad\lesssim \|\Psi(0)\|_{\dot{H}^\sigma( \R^5)}+\|\Psi_{t}(0)\|_{\dot{H}^{\sigma-1}(\R^5)}+\|\Box\Psi\|_{L^{\bar{p}'}L^{\bar{q}'}(I\times \R^5)},
\endaligned
\label{Str-gen} 
\end{equation}
with $I$ being a time interval and
\[
\begin{cases}
2\leq p\leq \infty, \qquad 2\leq q< \infty, \qquad \frac{1}{p}+\frac{2}{q}\leq 1, \\
1\leq \bar{p}' \leq 2, \qquad\, 1<\bar{q}' \leq 2, \qquad \frac{1}{\bar{p}'}+\frac{2}{\bar{q}'}\geq2, \\
\frac{1}{p}+\frac{5}{q}= \frac{5}{2}-\sigma= -2+\frac{1}{\bar{p}'}+\frac{5}{\bar{q}'}.
\end{cases}
\]
A straightforward consequence of the previous bound is the following generalized energy estimate:
\begin{equation}
\aligned
\|\Psi\|_{L^\infty\dot{H}^\sigma(I\times \R^5)}+\|\Psi_{t}\|_{L^\infty\dot{H}^{\sigma-1}(I\times \R^5)}\lesssim &\|\Psi(0)\|_{\dot{H}^\sigma( \R^5)}+\|\Psi_{t}(0)\|_{\dot{H}^{\sigma-1}(\R^5)}\\
&+\|\Box\Psi\|_{L^1\dot{H}^{\sigma-1}(I\times \R^5)}.
\endaligned
\label{en-hs} 
\end{equation}


\section{Energy-type arguments} \label{sect-en}

In this section, we start in earnest our analysis and prove that
\begin{equation}
\|v\|_{L^\infty H^{1}}+\|v_t\|_{L^\infty L^{2}}\lesssim 1
\label{v-h1}
\end{equation}
and, subsequently,
\begin{equation}
\|\Phi\|_{L^\infty H^{1}}+\|\Phi_t\|_{L^\infty L^{2}}\lesssim 1.
\label{Phi-h1}
\end{equation}
Next, we use this information and the radial Sobolev inequalities \eqref{rad-Sob-1}-\eqref{rad-Sob-2} to derive preliminary decay estimates for both $v$ and $\Phi$, which, in turn, imply valuable asymptotics for $\Phi$ and $\Box\Phi$.

\vspace{.1in}
\hrule


\subsection{Energy-type arguments for $v$} 

Based on the formula \eqref{utov} and Hardy's inequality \eqref{Hardy}, we infer that 
\begin{equation}
\|v_t\|_{L^\infty L^2}\simeq\left\|u_t\right\|_{L^\infty L^2([0,T)\times \R^3)}\lesssim E^{1/2}\lesssim 1
\label{vt-l2}
\end{equation}
and
\begin{equation}
\aligned
\|v_r\|_{L^\infty L^2}&\lesssim\left\|\frac{\varphi_r}{r}\right\|_{L^2(\R^5)}+\left\|u_r\right\|_{L^\infty L^2([0,T)\times \R^3)}+\left\|\frac{u-\varphi}{r^2}\right\|_{L^\infty L^2}\\
&\lesssim 1+E^{1/2}+\left\|\frac{u}{r}\right\|_{L^\infty L^2([0,T)\times \R^3)}\\
&\lesssim 1+E^{1/2}+\left\|u_r\right\|_{L^\infty L^2([0,T)\times \R^3)}\\ 
&\lesssim 1+E^{1/2}\\ &\lesssim 1.
\endaligned
\label{vr-l2}
\end{equation}
Hence, by the fundamental theorem of calculus, we also have
\begin{equation}
\|v\|_{L^\infty L^2}\lesssim\|v(0)\|_{L^2(\R^5)}+T\,E^{1/2}  \lesssim 1,
\label{v-l2}
\end{equation}
which finishes the proof of \eqref{v-h1}.


\subsection{Energy-type arguments for $\Phi$} 

We proceed by using the formula \eqref{Phi-final} to deduce
\begin{equation}
\|\Phi_t\|_{L^\infty L^2}\simeq\left\|\left(1+\frac{2\sin^2 u}{r^2}\right)^{1/2}u_t\right\|_{L^\infty L^2([0,T)\times \R^3)}\lesssim E^{1/2} \lesssim 1.
\label{phitlil2}
\end{equation}
Moreover, a direct argument relying on the same formula yields 
\[\aligned
|\Phi(0)|\lesssim \int_0^{|v(0)|}(1+y)dy+\frac{|\varphi_{\geq 1/2}|}{r^3}\lesssim |v(0)|+|v(0)|^2+\frac{|\varphi_{\geq 1/2}|}{r^3},
\endaligned
\]
which, coupled with the Sobolev embeddings \eqref{Sob-gen}, further implies  
\[
\|\Phi(0)\|_{L^2(\R^5)}\,\lesssim\,1+\|v(0)\|_{L^2(\R^5)}+\|v(0)\|^2_{L^4(\R^5)}\,\lesssim\,1+\|v(0)\|_{H^{5/4}(\R^5)}^2 \lesssim 1.
\]
If we argue like we did for $v$, then we obtain
\begin{equation}
\|\Phi\|_{L^\infty L^2}\lesssim 1+\|v(0)\|^2_{H^{5/4}(\R^5)}+T\,E^{1/2} \lesssim 1.
\label{philil2}
\end{equation}
Thus, in order to conclude the argument for \eqref{Phi-h1}, we need to obtain a favorable estimate for $\|\Phi_r\|_{L^\infty L^2}$, which is quite technical in nature. 

First, we prove the following fixed-time inequality.
\begin{prop}
Under the assumptions of Theorem \ref{main-th-v-2},
\begin{equation}
\int_{\R^5}\left(\Phi_r+2\frac{\Phi}{r}\right)^2 dx\lesssim 1
\label{phir}
\end{equation}
holds true uniformly in time on $[0,T)$.
\end{prop}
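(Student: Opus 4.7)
The plan is to exploit the identity $\Phi_r+2\Phi/r=\partial_r(r^2\Phi)/r^2$. Since the radial measure on $\R^5$ is proportional to $r^4\,dr$, this reduces \eqref{phir} to the one-dimensional estimate $\int_0^\infty[\partial_r(r^2\Phi)]^2\,dr\lesssim 1$. I will compute $\partial_r(r^2\Phi)$ in closed form using the explicit formula \eqref{Phi-final}, and then control each of the resulting pieces in $L^2(dr)$ by combining the conserved energy \eqref{tote-0}, Hardy's inequality \eqref{Hardy} in $\R^5$, the already-established bound \eqref{v-h1}, and the smoothness and support properties of the cutoffs $\varphi$ and $\varphi_{\geq 1/2}$.

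For the identity, set $K(r):=\int_{\varphi(r)}^{u(t,r)}\sqrt{1+2\sin^2 w/r^2}\,dw$. The change of variables $w=ry+\varphi(r)$ in \eqref{Phi-final} yields $r\int_0^v\sqrt{\tA}\,dy=K(r)$, so $r^2\Phi=rK+\varphi_{\geq 1/2}/r$. Differentiating $K$ produces boundary contributions at $w=u$ and $w=\varphi$ together with the integral $\int_\varphi^u\partial_r\sqrt{1+2\sin^2 w/r^2}\,dw$; invoking the elementary identity $\sqrt{A}-(A-1)/\sqrt{A}=1/\sqrt{A}$ with $A=1+2\sin^2 w/r^2$ to merge this integral with $K$ itself delivers the closed form
\[
\aligned
\partial_r(r^2\Phi)=&\ r\sqrt{1+2\sin^2 u/r^2}\,u_r+\int_{\varphi}^{u}\frac{dw}{\sqrt{1+2\sin^2 w/r^2}}\\
&\ -r\sqrt{1+2\sin^2\varphi/r^2}\,\varphi_r+\partial_r(\varphi_{\geq 1/2}/r).
\endaligned
\]

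To finish, I will bound each of the four summands in $L^2(dr)$. Squaring the first and integrating recovers $\int_0^\infty(1+2\sin^2 u/r^2)u_r^2\,r^2\,dr$, which is directly controlled by $E\lesssim 1$ via \eqref{tote-0}. The second is pointwise majorized by $|u-\varphi|=r|v|$ (since $1/\sqrt{A}\leq 1$), and $\int_0^\infty r^2 v^2\,dr\simeq\|v/|x|\|_{L^2(\R^5)}^2\lesssim \|\nabla v\|_{L^2(\R^5)}^2\lesssim 1$ by Hardy's inequality \eqref{Hardy} combined with \eqref{v-h1}. The third and fourth involve only $\varphi$ and $\varphi_{\geq 1/2}$, both smooth with compactly supported or decaying behavior, so their $L^2(dr)$ contributions are trivially finite. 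The main technical obstacle is isolating the correct closed form for $\partial_r(r^2\Phi)$: the pieces arising from naive differentiation do not individually have enough decay at infinity, and only after the algebraic cancellation that produces the integrand $1/\sqrt{A}\leq 1$ does one obtain the crucial pointwise bound $\leq r|v|$, which Hardy's inequality and \eqref{v-h1} then convert into an energy-level estimate.
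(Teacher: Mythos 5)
Your proof is correct, and it is a genuine streamlining of the paper's argument rather than the same one. Both proofs ultimately rest on the same ingredients: the algebraic cancellation $\sqrt{A}-(A-1)/\sqrt{A}=A^{-1/2}$, which leaves only the bounded integrand $A^{-1/2}\leq 1$; the conserved energy \eqref{tote-0} to absorb the $r\sqrt{1+2\sin^2u/r^2}\,u_r$ term; and Hardy's inequality \eqref{Hardy} in $\R^5$ together with \eqref{v-h1} to absorb the $r|v|$ term. The difference is structural. The paper derives a closed form for $\Phi_r+2\Phi/r$ directly from \eqref{Phi-final}, uses it only in the region $\{r<1/2\}$, and then handles $\{r\geq 1/2\}$ separately with the cruder triangle-inequality bound $|\Phi_r|+|\Phi|/r\lesssim |v_r|+|v|/r+v^2/r^2+|\tilde\varphi_{\geq 1/2}|/r^3$ and the radial Sobolev estimate \eqref{rad-Sob-2} to control $\int_{\{r\geq 1/2\}}v^4/r^4\,dx$. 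Your rewrite $\Phi_r+2\Phi/r=r^{-2}\partial_r(r^2\Phi)$ followed by the change of variables $w=ry+\varphi(r)$ produces a single closed form for $\partial_r(r^2\Phi)$ valid uniformly in $r$ (so no case split is needed), makes the $\varphi_r$ and $\varphi_{\geq 1/2}$ boundary contributions fully explicit, and turns the problematic integral into the pointwise bound $\int_\varphi^u A^{-1/2}\,dw\leq|u-\varphi|=r|v|$, which is then directly an $H^1$-level quantity by Hardy. The only price is carrying the extra compactly supported term $-r\sqrt{1+2\sin^2\varphi/r^2}\,\varphi_r$, which as you note is trivially in $L^2(dr)$. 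Two tiny points worth keeping in mind: the $L^2(dr)$ finiteness of $\partial_r(\varphi_{\geq 1/2}/r)$ uses that $\varphi_{\geq 1/2}$ is bounded with bounded derivatives (it is supported away from $0$ but need not be compactly supported, so the $1/r$ decay is what saves the integral at infinity); and the real danger you avoid by the cancellation is not so much decay at infinity as that $\sqrt{A}$ is not pointwise bounded near $r=0$ (and the naive integral $K=\int_\varphi^u\sqrt{A}\,dw=r\Phi-\varphi_{\geq 1/2}/r^2$ would require exactly the $\dot H^1$ control of $\Phi$ that this proposition is building toward), whereas $A^{-1/2}\leq 1$ holds uniformly.
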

\begin{proof}
If we rely again on \eqref{Phi-final}, then we deduce
\[
\Phi_r+2\frac{\Phi}{r}=\left(1+\frac{2\sin^2 u}{r^2}\right)\frac{u_r}{r}+\int_0^v \left(1+\frac{2\sin^2(ry+\varphi)}{r^2}\right)^{-1/2}\frac{1}{r}dy.
\]
Therefore, with the help of \eqref{Hardy} and \eqref{vr-l2}, we infer
\begin{equation}\aligned
\int_{\{r<1/2\}}\left(\Phi_r+2\frac{\Phi}{r}\right)^2 dx&\lesssim \int_{\R^3}\left(1+\frac{2\sin^2 u}{r^2}\right)^2\frac{u^2_r}{r^2} \,dx + \int_{\R^5}\frac{v^2}{r^2} \,dx\\
&\lesssim E+\|v\|^2_{L^\infty\dot{H}^1}\\
&\lesssim 1.
\endaligned
\label{phir-1}
\end{equation}
In the complementary region (i.e., $\{r\geq 1/2\}$), a straightforward analysis using \eqref{Phi-final} yields
\[
|\Phi_r|+\frac{|\Phi|}{r}\lesssim |v_r|+\frac{|v|}{r}+\frac{v^2}{r^2}+\frac{|\tilde{\varphi}_{\geq 1/2}|}{r^3}
\]
where $\tilde{\varphi}_{\geq 1/2}=\tilde{\varphi}_{\geq 1/2}(r)$ is a function sharing the profile of $\varphi_{\geq 1/2}$. Hence, using \eqref{Hardy}, \eqref{rad-Sob-2}, and \eqref{v-h1}, we derive:
\begin{equation*}\aligned
\int_{\{r\geq 1/2\}}\left(\Phi_r+2\frac{\Phi}{r}\right)^2 dx&\lesssim 1+\|v\|^2_{L^\infty \dot{H}^1}+\int_{\{r\geq 1/2\}}\frac{v^4}{r^4}\,dx \\
&\lesssim 1+\|v\|^2_{L^\infty \dot{H}^1}+\|v\|^4_{L^\infty H^1}  \int_{\{r\geq 1/2\}}\frac{1}{r^{12}}\,dx\\
&\lesssim 1+\|v\|^4_{L^\infty H^1}\\
&\lesssim 1.
\endaligned
\end{equation*}
Together with \eqref{phir-1}, this bound implies \eqref{phir}.
\end{proof}

Next, we show that another related fixed-time estimate is true.

\begin{prop}
Under the assumptions of Theorem \ref{main-th-v-2}, 
\begin{equation}
\aligned
\int_{\R^5}\left\{\Phi_r^2-\frac{9}{4}\frac{\Phi^2}{r^2}\right\}dx \lesssim 1.
\endaligned
\label{phir-phi}
\end{equation}
holds true uniformly in time on $[0,T)$.
\end{prop}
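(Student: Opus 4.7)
The plan is to extract this bound from the previous proposition \eqref{phir} via a single radial integration-by-parts identity. The starting point is the elementary expansion $(\Phi_r + 2\Phi/r)^2 = \Phi_r^2 + 4\Phi\Phi_r/r + 4\Phi^2/r^2$, which, after solving for $\Phi_r^2$ and substituting, yields the pointwise identity
\[
\Phi_r^2 - \frac{9}{4}\frac{\Phi^2}{r^2} = \left(\Phi_r + 2\frac{\Phi}{r}\right)^2 - 4\frac{\Phi\Phi_r}{r} - \frac{25}{4}\frac{\Phi^2}{r^2}.
\]

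The next step is to integrate over $\R^5$ and handle the cross term via a radial integration by parts in $r$. For a radial function $\Phi$ on $\R^5$ with sufficient regularity and decay, one readily computes (provided the boundary contributions at $r = 0$ and $r = \infty$ vanish)
\[
\int_{\R^5}\frac{\Phi\Phi_r}{r}\,dx \,=\, -\frac{3}{2}\int_{\R^5}\frac{\Phi^2}{r^2}\,dx.
\]
Substituting this into the integrated pointwise identity collapses everything to
\[
\int_{\R^5}\left\{\Phi_r^2 - \frac{9}{4}\frac{\Phi^2}{r^2}\right\}dx \,=\, \int_{\R^5}\left(\Phi_r + 2\frac{\Phi}{r}\right)^2 dx \,-\, \frac{1}{4}\int_{\R^5}\frac{\Phi^2}{r^2}\,dx,
\]
and the claim then follows by dropping the nonpositive correction term on the right and invoking \eqref{phir}.

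The only technical step is justifying the above integration by parts, i.e., showing that $\Phi(t,\cdot)^2 r^3$ vanishes as $r \to 0^+$ and as $r \to \infty$ at each fixed time $t \in [0,T)$. Since $v(t,\cdot) \in H^s(\R^5)$ with $s > 7/2 > 5/2$, the classical Sobolev embedding \eqref{Sob-classic} yields $v(t,\cdot) \in C_0(\R^5)$; combined with the formula \eqref{Phi-final}, this shows that $\Phi(t,\cdot)$ is bounded near the origin and decays at spatial infinity (the $\varphi_{\geq 1/2}/r^3$ piece is manifestly $O(r^{-3})$, while the integral piece inherits the decay of $v$), so both boundary contributions vanish. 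Importantly, this use of the higher Sobolev regularity is purely qualitative — the quantitative bound is uniform in $t \in [0,T)$ and is controlled solely by the constant produced by \eqref{phir}.
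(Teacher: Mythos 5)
Your proposal is correct, and it is a genuinely different and substantially shorter route than the paper's. The paper's proof of \eqref{phir-phi} is a multi-step energy argument: it multiplies \eqref{Box-Phi} by $\Phi_t$, integrates, introduces the auxiliary function $H(r,v)$, calibrates a small radius $r_0$, and finishes with a calculus analysis to establish \eqref{phi98}. You bypass all of this by observing that, for a radial function on $\R^5$, the integration by parts
\[
\int_{\R^5}\frac{\Phi\Phi_r}{r}\,dx \,=\, -\frac{3}{2}\int_{\R^5}\frac{\Phi^2}{r^2}\,dx
\]
turns the left side of \eqref{phir-phi} into $\int(\Phi_r+2\Phi/r)^2\,dx - \frac{1}{4}\int \Phi^2/r^2\,dx$, which is bounded above by the quantity in \eqref{phir}. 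One may phrase the same observation even more cleanly: the integration by parts gives, for any constant $a$,
\[
\int_{\R^5}\left(\Phi_r+a\,\frac{\Phi}{r}\right)^2dx \,=\, \int_{\R^5}\Phi_r^2\,dx + (a^2-3a)\int_{\R^5}\frac{\Phi^2}{r^2}\,dx,
\]
so that the choice $a=3/2$ (which minimizes $a^2-3a$) yields $\int(\Phi_r^2 - \tfrac{9}{4}\Phi^2/r^2)\,dx=\int(\Phi_r+\tfrac32\Phi/r)^2\,dx$, a \emph{perfect square} wedged between $0$ and $\int(\Phi_r+2\Phi/r)^2\,dx$. Incidentally, the same identity with $a=2$ immediately gives $\int\Phi_r^2 - 2\int\Phi^2/r^2 = \int(\Phi_r+2\Phi/r)^2$, and combining the two choices $a=2$ and $a=3/2$ already produces $\int\Phi^2/r^2\lesssim 1$ and hence $\|\Phi_r\|_{L^\infty L^2}\lesssim 1$, so the ``basic inequality'' step at the end of Section~4 is also subsumed; the only nontrivial input for the whole subsection is really \eqref{phir}.

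One small correction to your justification of the boundary terms: the classical Sobolev embedding \eqref{Sob-classic} gives $v(t,\cdot)\in C_0(\R^5)$ and hence that $v$, and therefore $\Phi$, vanishes at infinity, but this is only a qualitative statement and does not by itself force $\Phi^2 r^3\to 0$. What you actually want is the radial Sobolev estimate \eqref{rad-Sob-2}, which yields $|v(t,r)|\lesssim r^{-2}\|v(t)\|_{H^1(\R^5)}$ and hence $|\Phi(t,r)|\lesssim r^{-2}$, so that $\Phi^2 r^3 = O(r^{-1})\to 0$. Alternatively, you can avoid boundary terms altogether: at each fixed $t<T$ one has $\Phi(t,\cdot)\in H^1(\R^5)$, Hardy's inequality \eqref{Hardy} gives $\Phi/r\in L^2$, and the radial integration-by-parts identity then follows by density of $C_c^\infty(\R^5\setminus\{0\})$ in $H^1(\R^5)$. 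Either fix is routine and does not affect the substance of your argument.
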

\begin{proof}
The first step in the argument is to prove
\begin{equation}
\aligned
\frac{d}{dt}\left\{\int_{\R^5}\frac{\Phi^2_t+\Phi^2_r}{2}\,dx\right\}-\int_{\R^5}\left\{\varphi_{<r_0}\Phi_t\int_0^v\frac{3}{2}\left(\tA^{3/2}-\tA^{1/2}\right)dy\right\}dx 
\lesssim \frac{1}{r^2_0},
\label{phir-phi-v1}
\endaligned
\end{equation}
where $\varphi_{<r_0}=\varphi_{<r_0}(r)=\varphi_{<1}(r/r_0)$ and $r_0\leq 1$ is a scale to be further calibrated. To achieve this, we use \eqref{Phi-final}, \eqref{Box-Phi}, and $\tA\geq 1$ to obtain
\begin{equation}
|v|\leq|\Phi|+\frac{\left|\varphi_{\geq 1/2}\right|}{r^3}
\label{v-bd}
\end{equation}
and
\begin{equation*}
\Box_{5+1}\Phi=\frac{1}{2}\int_{0}^{v}\left\{3(\tA^{3/2}-\tA^{1/2})+(\tA^{-1/2}-\tA^{-3/2})\right\}dy\,+\,\frac{\varphi_{\geq 1/2}}{r^3}.
\end{equation*}
If we multiply this equation by $\Phi_t$ and integrate the outcome on $\R^5$, then we deduce
\begin{equation}
\aligned
\frac{d}{dt}&\left\{\int_{\R^5}\frac{\Phi^2_t+\Phi^2_r}{2}\,dx\right\}-\int_{\R^5}\left\{\varphi_{<r_0}\Phi_t\int_0^v\frac{3}{2}\left(\tA^{3/2}-\tA^{1/2}\right)dy\right\}dx\\ 
&\quad=\int_{\R^5}\left\{(1-\varphi_{<r_0})\Phi_t\int_0^v\frac{3}{2}\left(\tA^{3/2}-\tA^{1/2}\right)dy\right\}dx\\
&\quad\quad+\int_{\R^5}\left\{\Phi_t\left[ \int_0^v\frac{1}{2}\left(\tA^{-1/2}-\tA^{-3/2}\right)dy+\frac{\varphi_{\geq 1/2}}{r^3}\right]\right\}dx.
\endaligned
\label{ddtphi}
\end{equation}
Due to $\tA\geq 1$, \eqref{v-bd}, \eqref{phitlil2}, and \eqref{v-l2}, it follows that
\begin{equation}
\aligned
\int_{\R^5}&\left\{\Phi_t\left[ \int_0^v\frac{1}{2}\left(\tA^{-1/2}-\tA^{-3/2}\right)dy+\frac{\varphi_{\geq 1/2}}{r^3}\right]\right\}dx\\
&\qquad\qquad\qquad\lesssim \int_{\R^5}\left\{|\Phi_t|\left(|v|+\frac{|\varphi_{\geq 1/2}|}{r^3}\right)\right\}dx\\
&\qquad\qquad\qquad\lesssim \|\Phi_t\|_{L^\infty L^2}\left(1+\|v\|_{L^\infty L^2}\right)\\
&\qquad\qquad\qquad\lesssim 1.
\endaligned
\label{ddtphi-2}
\end{equation}
On the other hand, with the help of \eqref{Phi-final}, we derive
\begin{equation*}
\left|(1-\varphi_{<r_0})\int_0^v\left(\tA^{3/2}-\tA^{1/2}\right)dy\right|\lesssim \frac{1}{r_0^2}\int_0^{|v|}\tA^{1/2}dy\lesssim  \frac{1}{r_0^2}\left(|\Phi|+\frac{\left|\varphi_{\geq 1/2}\right|}{r^3}\right).
\end{equation*}
If we also factor in \eqref{phitlil2} and \eqref{philil2}, then this estimate implies
\begin{equation*}
\aligned
\int_{\R^5}\left\{(1-\varphi_{<r_0})\Phi_t\int_0^v\frac{3}{2}\left(\tA^{3/2}-\tA^{1/2}\right)dy\right\}dx\lesssim \frac{\|\Phi_t\|_{L^\infty L^2}}{r^2_0}\left(\|\Phi\|_{L^\infty L^2}+1\right)\lesssim \frac{1}{r^2_0}.
\endaligned
\end{equation*}
It is clear now that \eqref{phir-phi-v1} holds as a result of this bound, \eqref{ddtphi}, and \eqref{ddtphi-2}.

The second step in the proof of \eqref{phir-phi} consists in rewriting the second term on the right-hand side of \eqref{phir-phi-v1} in a friendlier format. For this purpose, we introduce
\[
H(r,w):=\frac{3}{2}\int_0^w \left(1+\frac{2\sin^2(rx)}{r^2}\right)^{1/2}\left\{\int_0^x \left(1+\frac{2\sin^2(ry)}{r^2}\right)^{1/2}\frac{2\sin^2(ry)}{r^2}\,dy\right\}dx.
\]
If we take advantage of \eqref{Phi-final} and $r_0\leq 1$, then we obtain
\[
\aligned
\frac{d}{dt}&\left\{\varphi_{<r_0}H(r,v)\right\}\\
&= \varphi_{<r_0}\left(1+\frac{2\sin^2(rv)}{r^2}\right)^{1/2}v_t\int_0^v \left(1+\frac{2\sin^2(ry)}{r^2}\right)^{1/2}\frac{3\sin^2(ry)}{r^2}\,dy\\
&=\varphi_{<r_0}\Phi_t\int_0^v \left(1+\frac{2\sin^2(ry+\varphi(r))}{r^2}\right)^{1/2}\frac{3\sin^2(ry+\varphi(r))}{r^2}\,dy\\
&= \varphi_{<r_0}\Phi_t\int_0^v\frac{3}{2}\left(\tA^{3/2}-\tA^{1/2}\right)dy.
\endaligned
\] 
Hence, we can restate \eqref{phir-phi-v1} as
\begin{equation*}
\aligned
\frac{d}{dt}\left\{\int_{\R^5}\left\{\frac{\Phi^2_t+\Phi^2_r}{2}-\varphi_{<r_0}\,H(r,v)\right\}dx\right\}\lesssim \frac{1}{r^2_0}.
\endaligned
\end{equation*}

The third step of this argument involves integrating the previous estimate over the interval $[0,t]\subset [0,T)$, which leads to 
\begin{equation}
\aligned
\int_{\R^5}&\left\{\frac{\Phi^2_r}{2}-\varphi_{<r_0}\,H(r,v)\right\}dx\\
&\qquad\lesssim \int_{\R^5}\left\{\frac{\Phi^2_r(0)}{2}-\varphi_{<r_0}\,H(r,v(0))\right\}dx+\|\Phi_t\|^2_{L^\infty L^2}+\frac{T}{r^2_0}\\
&\qquad\lesssim \int_{\R^5}\left\{\frac{\Phi^2_r(0)}{2}-\varphi_{<r_0}\,H(r,v(0))\right\}dx+\frac{1}{r^2_0}.
\endaligned
\label{phir-l2-1}
\end{equation}
We address first the integral term on the right-hand side, for which a direct computation based on \eqref{Phi-final} and \eqref{ta-formula} reveals that
\[
\aligned
\Phi_r(0)\,=\,\left(1+\frac{2\sin^2u(0)}{r^2}\right)^{1/2}v_r(0)+\frac{\varphi_{\geq 1/2}}{r^3}+\frac{1}{2}\int_{0}^{v(0)}\left\{\tA^{-1/2}\tA_r\right\}dy,
\endaligned
\]
with
\begin{equation}
\tA_r=\frac{-4\sin^2(ry+\varphi)}{r^3}+\frac{2\sin2(ry+\varphi)\cdot(y+\varphi_r)}{r^2}.
\label{ta-r}
\end{equation}
Using the formula \eqref{utov}, it is easy to derive
\begin{equation*}
\frac{\sin^2u(0)}{r^2}\lesssim 1+v^2(0).
\end{equation*}
Moreover, a direct analysis using Maclaurin series shows that, when $r\geq 1$, we have
\begin{equation}
\left|\frac{-2\sin^2(ry+\varphi)}{r^3}+\frac{\sin2(ry+\varphi)\cdot(y+\varphi_r)}{r^2}\right|\lesssim\frac{1+|y|}{r^2},
\label{ta-r-g1}
\end{equation}
while for $r<1$, we get
\begin{equation}
\left|\frac{-2\sin^2(ry+\varphi)}{r^3}+\frac{\sin2(ry+\varphi)\cdot(y+\varphi_r)}{r^2}\right|\lesssim ry^4.
\label{ta-r-l1}
\end{equation}
Hence, by collecting the last five mathematical statements and applying the Sobolev embeddings \eqref{Sob-gen}, we conclude that
\begin{equation}
\aligned
\int_{\R^5}\Phi^2_r(0)\,dx\lesssim& \int_{\R^5}\left\{(1+v^2(0))v^2_r(0)+\frac{\varphi^2_{\geq 1/2}}{r^6}\right\}dx\\
&+\int_{\{r\geq 1\}}\left\{\frac{v^2(0)+v^4(0)}{r^4}\right\}dx+\int_{\{r< 1\}}\left\{r^2v^{10}(0)\right\}dx.\\
\lesssim& \ 1+\|v(0)\|^2_{\dot{H}^{1}(\R^5)}+\|v(0)\|^4_{H^{7/4}(\R^5)}+\|v(0)\|^2_{L^2(\R^5)}\\
&\ +\|v(0)\|^4_{H^{5/4}(\R^5)}+\|v(0)\|^{10}_{H^{2}(\R^5)}
\\
\lesssim& \ 1.
\endaligned
\label{phir-l2}
\end{equation}
Using the basic estimate $|\sin x|\leq|x|$, we easily infer that
\[
|H(r,w)|\lesssim w^4+w^6
\]
and, as a consequence, we have
\begin{equation*}
\aligned
\left|\int_{\R^5}\varphi_{<r_0}\,H(r,v(0))dx\right|&\lesssim \|v(0)\|^4_{L^4(\R^5)}+\|v(0)\|^6_{L^6(\R^5)}\\&\lesssim \|v(0)\|^4_{H^{5/4}(\R^5)}+\|v(0)\|^6_{H^{5/3}(\R^5)}
\\&\lesssim 1.
\endaligned
\end{equation*}
Together with \eqref{phir-l2-1} and \eqref{phir-l2}, this bound yields
\begin{equation}
\aligned
\int_{\R^5}\left\{\frac{\Phi^2_r}{2}-\varphi_{<r_0}\,H(r,v)\right\}dx\lesssim \frac{1}{r^2_0}.
\endaligned
\label{phir-l2-2}
\end{equation}

In the last step of this proof, we show that if we choose $r_0$ to be sufficiently small, then the following estimate holds:
\begin{equation}
|\varphi_{<r_0}\,H(r,v)|\leq \frac{9}{8}\frac{\Phi^2}{r^2}.
\label{phi98}
\end{equation}
It is clear that, jointly with \eqref{phir-l2-2}, this inequality implies \eqref{phir-phi}. We claim that a calculus-level analysis finds that
\[
|H(r,w)|\leq \frac{9}{8r^2} \left(\int^w_0\left(1+\frac{2\sin^2(rx)}{r^2}\right)^{1/2}dx\right)^2, \qquad \forall\, (r,w)\in(0,r_1)\times\R,
\]
is true if $r_1$ is sufficiently small. Therefore, by choosing 
\[
r_0<\min\left\{\frac{1}{2}, r_1\right\}
\]
and also using \eqref{Phi-final}, we infer
\[
|\varphi_{<r_0}\,H(r,v)|\leq \frac{9\,\varphi_{<r_0}}{8r^2} \left(\int^v_0\left(1+\frac{2\sin^2(rx)}{r^2}\right)^{1/2}dx\right)^2=\frac{9\,\varphi_{<r_0}}{8r^2}\,\Phi^2\leq \frac{9}{8}\frac{\Phi^2}{r^2},
\]
which finishes the argument for \eqref{phi98}.
\end{proof}

We can finally now invoke the basic inequality
\begin{equation*}
\frac{1}{10}b^2\leq \frac{3}{2}\left(b+2a\right)^2+b^2-\frac{9}{4}a^2,
\end{equation*}
which, coupled to \eqref{phir} and \eqref{phir-phi}, yields the desired control over $\|\Phi_r\|_{L^\infty L^2}$ in the form of
\begin{equation}
\aligned
\|\Phi_r\|_{L^\infty L^2}\lesssim 1.
\endaligned
\label{phirlil2}
\end{equation}


\subsection{Preliminary decay estimates and asymptotics}

First, we take advantage of \eqref{v-h1} and \eqref{Phi-h1} and derive decay estimates for both $\Phi$ and $v$. 
\begin{prop}
Under the assumptions of Theorem \ref{main-th-v-2}, we have
\begin{align}
|\Phi(t,r)|&\lesssim \min\left\{\frac{1}{r^2}, \frac{1}{r^{3/2}}\right\},\label{decay-Phi}\\
|v(t,r)|&\lesssim \min\left\{\frac{1}{r^2}, \frac{1}{r^{3/4}}\right\}.\label{decay-v}
\end{align}
\end{prop}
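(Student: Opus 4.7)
The plan is to derive both decay bounds from the $H^1$ control \eqref{v-h1} and \eqref{Phi-h1} established earlier in this section, combined with the radial Sobolev inequalities \eqref{rad-Sob-1}--\eqref{rad-Sob-2} and, for $v$ near the origin, the integral representation \eqref{Phi-final}.

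For $\Phi$, both estimates come directly from radial Sobolev on $\R^5$. Applying \eqref{rad-Sob-1} with $\sigma = 1$ (which lies in the admissible range $(1/2, 5/2)$ for $n = 5$) gives $r^{3/2}|\Phi(t,r)| \lesssim \|\Phi\|_{\dot{H}^1(\R^5)} \lesssim 1$, while \eqref{rad-Sob-2} yields $r^2 |\Phi(t,r)| \lesssim \|\Phi\|_{H^1(\R^5)} \lesssim 1$; taking the pointwise minimum produces \eqref{decay-Phi}. The bound $|v(t,r)| \lesssim r^{-2}$, which is the dominant member of the minimum in \eqref{decay-v} when $r \geq 1$, follows in exactly the same way from \eqref{rad-Sob-2} applied to \eqref{v-h1}.

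The complementary bound $|v(t,r)| \lesssim r^{-3/4}$ is needed only for $r \leq 1$, where it is strictly sharper than the $r^{-3/2}$ bound that \eqref{rad-Sob-1} would produce. On the intermediate strip $r \in [1/2, 1]$ the bound $|v| \lesssim 1$ coming from \eqref{rad-Sob-2} can be absorbed into the (bounded-below) quantity $r^{-3/4}$. On $\{r \leq 1/2\}$ the term $\varphi_{\geq 1/2}/r^3$ in \eqref{Phi-final} vanishes and $\varphi \equiv N_1\pi$, so $\sin^2(ry + \varphi(r)) = \sin^2(ry)$ and the representation reduces to $\Phi(t,r) = \int_0^{v(t,r)} (1 + 2\sin^2(ry)/r^2)^{1/2}\, dy$. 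Using the elementary inequality $(1+x)^{1/2} \geq \max(1, \sqrt{x})$ and changing variables $z = ry$ then produces the two pointwise lower bounds
\begin{equation*}
|\Phi(t,r)| \,\geq\, |v(t,r)| \qquad \text{and} \qquad |\Phi(t,r)| \,\gtrsim\, \frac{1}{r^{2}} \int_0^{r|v(t,r)|} |\sin z|\, dz.
\end{equation*}

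A case split on whether $r|v(t,r)| \lesssim 1$ or $r|v(t,r)| \gtrsim 1$ evaluates the integral above: it is bounded below by $(r|v|)^{2}$ in the first sub-case (using $|\sin z| \geq (2/\pi)z$ on $[0,\pi/2]$) and by $r|v|$ in the second (since $\int_0^L |\sin z|\,dz \gtrsim L$ for $L \gtrsim 1$). Combining with the $\dot{H}^1$--based bound $|\Phi| \lesssim r^{-3/2}$ proven above yields $|v|^2 \lesssim r^{-3/2}$ in the first sub-case and $|v|/r \lesssim r^{-3/2}$ in the second, and both implications force $|v(t,r)| \lesssim r^{-3/4}$ on $\{r \leq 1/2\}$, completing \eqref{decay-v}. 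The main subtlety is the nonlinear correspondence between $v$ and $\Phi$: the map is essentially linear ($\Phi \sim v$) for small $|v|$ but becomes at least quadratic once $|v|$ exceeds the scale $1/r$, and it is exactly this quadratic boost in the regime $r|v| \lesssim 1$ that converts the available $r^{-3/2}$ bound on $\Phi$ into the sharper $r^{-3/4}$ bound on $v$.
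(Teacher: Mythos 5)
Your proposal is correct and follows essentially the same route as the paper: radial Sobolev from $H^1$ control for both halves of \eqref{decay-Phi} and the $r^{-2}$ half of \eqref{decay-v}, then the integral representation \eqref{Phi-final} with the lower bound $|\Phi|\gtrsim r^{-2}\int_0^{r|v|}|\sin z|\,dz$ played against the upper bound $|\Phi|\lesssim r^{-3/2}$. The only cosmetic difference is that you keep both branches of the case split $r|v|\lesssim 1$ versus $r|v|\gtrsim 1$, whereas the paper observes that $\int_0^{r|v|}|\sin z|\,dz\lesssim r^{1/2}$ forces $r|v|\le\pi/2$ once $r$ is small and then only uses $|\sin z|\gtrsim z$ on that range; the two are logically equivalent, and the extra lower bound $|\Phi|\ge|v|$ you record from the ``$1$'' branch is not actually used.
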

\begin{proof}
The radial Sobolev inequalities \eqref{rad-Sob-1} and \eqref{rad-Sob-2} easily imply
\begin{equation*}
\aligned
|\Phi(t,r)|&\lesssim \min\left\{\frac{1}{r^2}, \frac{1}{r^{3/2}}\right\}\|\Phi\|_{L^\infty H^1},\\
|v(t,r)|&\lesssim \min\left\{\frac{1}{r^2}, \frac{1}{r^{3/2}}\right\}\|v\|_{L^\infty H^1},
\endaligned
\end{equation*}
thus proving \eqref{decay-Phi} and half of \eqref{decay-v} on the basis of \eqref{v-h1} and \eqref{Phi-h1}. 

For the other half of \eqref{decay-v}, we work in the regime when $r\ll 1$. If we choose $r<1/2$ and use \eqref{Phi-final}, then we obtain
\[
\Phi\,=\,\int_{0}^{v}\left(1+\frac{2\sin^2(ry)}{r^2}\right)^{1/2}dy
\]
and, consequently,
\[
|\Phi|\,\geq\,\int_{0}^{|v|}\frac{|\sin(ry)|}{r}dy=\frac{1}{r^2}\int_{0}^{r|v|}|\sin z|dz.
\]
Relying on \eqref{decay-Phi}, we deduce
\[
\int_{0}^{r|v|}|\sin z|dz\lesssim r^{1/2},
\]
which shows that $r|v|\leq \pi/2$ if $r$ is sufficiently small. Hence, we can argue that in this scenario we have
\[
\frac{r^2v^2}{2}=\int_{0}^{r|v|}|z|dz\lesssim \int_{0}^{r|v|}|\sin z|dz\lesssim r^{1/2},
\] 
thus proving the desired bound.
\end{proof}

Next, we use these decay estimates and obtain asymptotics for $\Phi$ and $\Box\Phi$ in terms of $v$, which turn out to be very important in further arguments.
\begin{prop}
Under the assumptions of Theorem \ref{main-th-v-2}, we have
\begin{equation}
|\Phi|\sim |v|+v^2 \quad \text{and} \quad|\Box\Phi|\sim |v|^3+v^4\qquad\text{if}\quad r\ll 1
\label{rll1}
\end{equation}
and
\begin{equation}
\left|\Phi-\frac{\varphi_{\geq 1/2}}{r^3}\right|\sim |v| \quad \text{and} \quad\left|\Box\Phi-\frac{\varphi_{\geq 1/2}}{r^3}\right|\lesssim \frac{|v|}{r^2}\qquad\text{if}\quad r\gtrsim 1.
\label{rgrt1}
\end{equation}
\end{prop}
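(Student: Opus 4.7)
The plan is to read off both asymptotics directly from the defining identities \eqref{Phi-final} and \eqref{Box-Phi}. Since $-\tfrac{3}{2}\Phi=-\tfrac{3}{2}\int_0^v\tA^{1/2}\,dy-\tfrac{3}{2}\varphi_{\geq 1/2}/r^3$, substituting into \eqref{Box-Phi} and absorbing the leftover constant multiples of $\varphi_{\geq 1/2}/r^3$ into a new generic $\varphi_{\geq 1/2}/r^3$ produces
\[
\Box\Phi-\frac{\varphi_{\geq 1/2}}{r^3}=\int_0^v (\tA-1)\left[\frac{3}{2}\tA^{1/2}+\frac{1}{2}\tA^{-3/2}\right]dy,
\]
with $\tA-1=2\sin^2(ry+\varphi)/r^2$. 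This is the form I would estimate; for $\Phi$ itself, \eqref{Phi-final} already displays $\Phi-\varphi_{\geq 1/2}/r^3=\int_0^v\tA^{1/2}\,dy$, so the same strategy applies.

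For $r\ll 1$, I would pick $r$ small enough that $\varphi(r)=N_1\pi$ and $\varphi_{\geq 1/2}(r)=0$, reducing $\sin^2(ry+\varphi)$ to $\sin^2(ry)$. The decay bound \eqref{decay-v} gives $r|v|\lesssim r^{1/4}\leq\pi/2$ for $r$ small, so the range of integration lies entirely in $|ry|\leq\pi/2$, on which $|\sin(ry)/r|\sim|y|$ both from above and below (the lower bound comes from $\sin x/x\geq 2/\pi$ on $[0,\pi/2]$). Consequently $\tA^{1/2}\sim 1+|y|$ and $\tA-1\sim y^2$. Direct integration then yields $|\Phi|\sim\int_0^{|v|}(1+y)\,dy\sim|v|+v^2$; for $\Box\Phi$, the integrand above is equivalent to $y^2(1+|y|)$, and a clean case split according to whether $|v|\leq 1$ or $|v|>1$ delivers $|\Box\Phi|\sim|v|^3+v^4$.

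For $r\gtrsim 1$, the analysis is much simpler because $1\leq\tA\leq 1+2/r^2\leq 3$, so all fractional powers $\tA^{\pm 1/2}$, $\tA^{\pm 3/2}$ are pinched between positive constants. This alone yields $|\Phi-\varphi_{\geq 1/2}/r^3|=|\int_0^v\tA^{1/2}\,dy|\sim|v|$. Moreover $\tA-1\leq 2/r^2$, so the integrand of the rewritten $\Box\Phi$ formula is bounded by $1/r^2$ pointwise, producing $|\Box\Phi-\varphi_{\geq 1/2}/r^3|\lesssim|v|/r^2$ immediately.

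The only genuinely delicate point is the two-sided equivalence $|\sin(ry)/r|\sim|y|$ used in the small-$r$ regime; its lower half requires staying inside a range where the sine has not yet turned over, which is exactly what the a priori decay \eqref{decay-v} from the preceding proposition secures. Everything else is elementary calculus, and the main practical obstacle is nothing more than organising the case split on $|v|\lessgtr 1$ cleanly when establishing the lower bound $|\Box\Phi|\gtrsim|v|^3+v^4$.
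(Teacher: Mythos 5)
Your proposal is correct and follows essentially the same route as the paper: rewrite $\Phi-\varphi_{\geq 1/2}/r^3=\int_0^v\tA^{1/2}\,dy$ and $\Box\Phi-\varphi_{\geq 1/2}/r^3=\int_0^v(\tfrac32\tA^{1/2}+\tfrac12\tA^{-3/2})(\tA-1)\,dy$, then invoke the a priori decay \eqref{decay-v} to force $r|v|$ into the regime where $|\sin(ry)|\sim|ry|$, giving $\tA^{1/2}\sim 1+|y|$ and $\tA-1\sim y^2$, and for $r\gtrsim 1$ use $\tA\sim 1$ with $\tA-1\lesssim r^{-2}$. The only cosmetic difference is that the paper calibrates $r$ so that $r|v|\leq 1$ rather than $\leq\pi/2$, and does not need the $|v|\lessgtr 1$ case split since the integral $\int_0^{|v|}(1+y)y^2\,dy$ is already comparable to $|v|^3+v^4$ outright.
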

\begin{proof}
We start by rewriting \eqref{Phi-final} and \eqref{Box-Phi} in the form
\[
\aligned
\Phi-\frac{\varphi_{\geq 1/2}}{r^3}&=\int_0^v \tilde{A}^{1/2}\,dy,\\
\Box\Phi-\frac{\varphi_{\geq 1/2}}{r^3}&=\int_0^v \left(\frac{3}{2}\tilde{A}^{1/2}+\frac{1}{2}\tilde{A}^{-3/2}\right)(\tilde{A}-1)\,dy.
\endaligned
\]
If we choose $r<1/2$, then
\[
\tilde{A}=1+\frac{2\sin^2(ry)}{r^2}, \qquad \varphi_{\geq 1/2}(r)=0. 
\]
Moreover, due to \eqref{decay-v}, it follows that, by further calibrating $r$ to be sufficiently small, we can guarantee that $r|v|\leq 1$. Therefore, we derive
\[
|\Phi|=\int_0^{|v|}\left(1+\frac{2\sin^2(ry)}{r^2}\right)^{1/2}dy\sim \int_0^{|v|}\left(1+y\right)dy\sim |v|+v^2
\] 
and
\[
\aligned
|\Box\Phi|&=\int_0^{|v|}\left\{\frac{3}{2}\left(1+\frac{2\sin^2(ry)}{r^2}\right)^{1/2}+\frac{1}{2}\left(1+\frac{2\sin^2(ry)}{r^2}\right)^{-3/2}\right\}\frac{2\sin^2(ry)}{r^2}\,dy\\
&\sim \int_0^{|v|}\left(1+y\right)y^2\,dy\sim |v|^3+v^4,
\endaligned
\]
which proves \eqref{rll1}.

In the complementary case when $r\gtrsim 1$, we easily have
\[
1\leq \tilde{A}\leq 1+\frac{2}{r^2}\sim 1
\] 
and the derivation of \eqref{rgrt1} follows exactly like above. \end{proof}


\section{$H^2$-type analysis} 

In this section, we take the next step in our analysis and show that
\begin{equation}
\|\Phi\|_{L^\infty \dot{H}^2}+\|\Phi_t\|_{L^\infty \dot{H}^1}+\|\Phi_{tt}\|_{L^\infty L^2}\lesssim 1.
\label{H2-norm}
\end{equation}
This is done by first deriving a wave equation for $\Phi_t$, which is then investigated by using the Strichartz estimates \eqref{Str-gen}. As a result, we obtain the desired Sobolev regularity for  both $\Phi_t$ and $\Phi_{tt}$. Jointly  with the main equation satisfied by $\Phi$ (i.e., \eqref{Box-Phi}), this information yields that $\Phi\in L^\infty \dot{H}^2$. Following this, we improve the decay estimates \eqref{decay-Phi} and \eqref{decay-v}.

\vspace{.1in}
\hrule


\subsection{Argument for the $\dot{H}^1$ and $L^2$ regularities of  $\Phi_{t}$ and $\Phi_{tt}$}

We start by noticing that a simple differentiation with respect to $t$ of both \eqref{Phi-final} and \eqref{Box-Phi} produces
\begin{equation}
\Phi_t=\tA^{1/2}(r,v)v_t=\left(1+\frac{2\sin^2u}{r^2}\right)^{1/2}v_t
\label{Phit-final}
\end{equation}
and
\begin{equation}
\aligned
\Box_{5+1}\Phi_t&=-\frac{3}{2}\Phi_t+\frac{1}{2}\left(3\tA^{3/2}(r,v)+\tA^{-1/2}(r,v)-\tA^{-3/2}(r,v)\right)v_t\\
&= \left(\tA(r,v)-1\right)\left(\frac{3}{2}+\frac{\tA^{-2}(r,v)}{2}\right)\Phi_t.
\endaligned
\label{Box-Phit}
\end{equation}

From these equations, it is clear that the expression $\tA(r,v)$ will play an important role moving forward. For this purpose, we rely on the decay estimate \eqref{decay-v} to easily infer
\begin{equation}
|\sin u|\lesssim \min\left\{\frac{1}{r}, r^{1/4}\right\},
\label{sinu-r}
\end{equation}
which leads to
\begin{equation}
\tA(r,v)-1=|\tA(r,v)-1|\lesssim \min\left\{\frac{1}{r^4}, \frac{1}{r^{3/2}}\right\}
\label{decay-A}
\end{equation}
and, subsequently, 
\begin{equation}
\frac{3}{2}+\frac{\tA^{-2}(r,v)}{2}\sim 1.
\label{ta-2}
\end{equation}
Moreover, the conservation of energy \eqref{tote} and \eqref{sinu-r} imply
\begin{equation}
\aligned
\|\tA(r,v)-1\|^{10/3}_{L^{10/3}(\R^5)}&\sim \int_0^\infty\frac{|\sin u|^{20/3}}{r^{8/3}}\,dr\\
&\lesssim \int_0^1\frac{\sin^4u}{r^2}\,dr+\int_1^\infty\frac{1}{r^{28/3}}\,dr\\
&\lesssim E+1\\&\lesssim 1.
\endaligned
\label{ta-103}
\end{equation}
We now have all that is needed to prove that $\Phi_t$ and $\Phi_{tt}$ have $\dot{H}^1$ and $L^2$ regularities, respectively.

\begin{prop}
Under the assumptions of Theorem \ref{main-th-v-2},
\begin{equation}
\|\Phi_t\|_{L^p L^q}+\|\Phi_{t}\|_{L^\infty \dot{H}^{1}}+\|\Phi_{tt}\|_{L^\infty L^2}\lesssim 1
\label{Phit-h1}
\end{equation}
holds true for all pairs $(p,q)$ satisfying 
\begin{equation}
2\leq p\leq \infty, \qquad 2\leq q<\infty, \qquad \frac{1}{p}+\frac{2}{q}\leq 1, \qquad \frac{1}{p}+\frac{5}{q}=\frac{3}{2}.
\label{pq-h2}
\end{equation}
\label{prop-Phit-h1}
\end{prop}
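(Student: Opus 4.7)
The plan is to apply the Strichartz machinery directly to the wave equation \eqref{Box-Phit} satisfied by $\Phi_t$, treating the right-hand side as a source term that is linear in $\Phi_t$ with a coefficient whose spatial $L^{10/3}$ norm is uniformly controlled by \eqref{ta-103}. The key observation is that since $\Box\Phi_t = (\tA-1)(3/2 + \tA^{-2}/2)\,\Phi_t$ and $3/2+\tA^{-2}/2\sim 1$ by \eqref{ta-2}, we can estimate pointwise in time
\[
\|\Box\Phi_t(t,\cdot)\|_{L^2(\R^5)} \;\lesssim\; \|\tA-1\|_{L^{10/3}(\R^5)}\,\|\Phi_t(t,\cdot)\|_{L^5(\R^5)} \;\lesssim\; \|\Phi_t(t,\cdot)\|_{L^5(\R^5)},
\]
where the Hölder exponent relation $1/2=3/10+1/5$ is used. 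This suggests pairing the inhomogeneous Strichartz endpoint $(\bar p',\bar q')=(1,2)$ (which satisfies $\frac{1}{\bar p'}+\frac{5}{\bar q'}=\frac{7}{2}$) with the admissible pair $(p,q)=(2,5)$ on the left-hand side.

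Concretely, I would first apply \eqref{Str-gen} with $\sigma = 1$ on a small subinterval $I=[t_*,t_*+T_0]$ to get
\[
\|\Phi_t\|_{L^2L^5(I)} + \|\Phi_t\|_{L^\infty\dot H^1(I)} + \|\Phi_{tt}\|_{L^\infty L^2(I)} \;\lesssim\; \|\Phi_t(t_*)\|_{\dot H^1} + \|\Phi_{tt}(t_*)\|_{L^2} + \|\Box\Phi_t\|_{L^1L^2(I)}.
\]
Combining the pointwise $L^2$ bound above with time-Hölder yields $\|\Box\Phi_t\|_{L^1L^2(I)}\lesssim T_0^{1/2}\,\|\Phi_t\|_{L^2L^5(I)}$, so choosing $T_0$ small enough (depending only on the fixed absolute constant from Strichartz) allows the $L^2L^5$ term to be absorbed, leaving
\[
\|\Phi_t\|_{L^2L^5(I)} + \sup_{t\in I}\bigl(\|\Phi_t(t)\|_{\dot H^1}+\|\Phi_{tt}(t)\|_{L^2}\bigr) \;\lesssim\; \|\Phi_t(t_*)\|_{\dot H^1}+\|\Phi_{tt}(t_*)\|_{L^2}.
\]

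Next I would partition $[0,T)$ into $\sim T/T_0$ consecutive intervals of length $T_0$ and iterate the above bound. This yields a Gronwall-type growth $e^{CT/T_0^{1/2}}$ for the quantity $\|\Phi_t(t)\|_{\dot H^1}+\|\Phi_{tt}(t)\|_{L^2}$, which is finite since $T$ is fixed, and (after summing $L^2L^5$ squares over subintervals) a finite bound for $\|\Phi_t\|_{L^2L^5([0,T))}$. The base case requires $\|\Phi_t(0)\|_{\dot H^1}$ and $\|\Phi_{tt}(0)\|_{L^2}$ to be finite, which is provided by the initial-data analysis in the appendix. Once $\|\Box\Phi_t\|_{L^1L^2([0,T))}$ is shown to be finite, a final application of \eqref{Str-gen} with an arbitrary admissible pair $(p,q)$ satisfying \eqref{pq-h2}—still against the dual pair $(\bar p',\bar q')=(1,2)$—yields $\|\Phi_t\|_{L^pL^q}\lesssim 1$ for every such pair.

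The main obstacle is the very last one in principle, i.e., keeping the argument closed despite only having the critical Hölder relation $\frac{1}{\bar p'}+\frac{2}{\bar q'}=2$ hold at equality, which forces us into the endpoint $(1,2)$ for the inhomogeneous side and thus into the single admissible pair $(2,5)$ for running the fixed-point/Gronwall step. This is why the small-interval partition and iteration are essential: a direct global estimate on $[0,T)$ would lose a factor $T^{1/2}$ that cannot be absorbed, whereas the iterated local version only pays an exponential in $T/T_0^{1/2}$, which is acceptable since $T<\infty$ is fixed throughout Theorem \ref{main-th-v-2}.
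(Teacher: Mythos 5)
Your proposal is correct and follows essentially the same route as the paper: Strichartz with $\sigma=1$ and dual exponents $(\bar p',\bar q')=(1,2)$, the pointwise H\"older bound $\|\Box\Phi_t\|_{L^2}\lesssim\|\tA-1\|_{L^{10/3}}\|\Phi_t\|_{L^5}$ using \eqref{ta-2}--\eqref{ta-103}, absorption at the single admissible pair $(2,5)$ on short intervals, iteration across the partition of $[0,T)$ starting from the Appendix's initial-data bounds, and a final Strichartz pass to fill out all admissible $(p,q)$. (One cosmetic slip: the iterated bound grows like $C^{\,T/T_0}$, not $e^{CT/T_0^{1/2}}$, but this does not affect the conclusion since $T<\infty$ is fixed.)
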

\begin{proof}
We start by applying the Strichartz estimates \eqref{Str-gen} to the nonlinear wave equation \eqref{Box-Phit} for the case when $\sigma=1$ and $(\bar{p}',\bar{q}')=(1,2)$. We derive that
\begin{equation}
\aligned
\|\Phi_t\|&_{L^pL^q(I\times \R^5)}+\|\Phi_t\|_{L^\infty\dot{H}^1(I\times \R^5)}+\|\Phi_{tt}\|_{L^\infty L^2(I\times \R^5)}\\
&\qquad\lesssim \|\Phi_t(a)\|_{\dot{H}^1( \R^5)}+\|\Phi_{tt}(a)\|_{L^2(\R^5)}+\|\Box\Phi_t\|_{L^1L^2(I\times \R^5)}
\endaligned
\label{Phit-h1-str}
\end{equation}
is valid for all intervals $I=[a,b]\subset [0,T)$ and pairs $(p,q)$ satisfying \eqref{pq-h2}. For the last term on the right-hand side, we use \eqref{ta-2} and \eqref{ta-103} to obtain
\begin{equation}
\aligned
\|\Box\Phi_t\|_{L^1L^2(I\times \R^5)}&\lesssim  \|\tA(r,v)-1\|_{L^2L^{10/3}(I\times \R^5)}\|\Phi_t\|_{L^2L^5(I\times \R^5)}\\
&\lesssim  |I|^{1/2}\|\Phi_t\|_{L^2L^5(I\times \R^5)}.
\endaligned
\label{Box-Phit-l1l2}
\end{equation}

It is easy to see that we are allowed to have $(p,q)=(2,5)$ in \eqref{Phit-h1-str} and, as a result, we deduce  
\begin{equation*}
\aligned
\|\Phi_t\|&_{L^2L^5(I\times \R^5)}+\|\Phi_t\|_{L^\infty\dot{H}^1(I\times \R^5)}+\|\Phi_{tt}\|_{L^\infty L^2(I\times \R^5)}\\
&\qquad\lesssim \|\Phi_t(a)\|_{\dot{H}^1( \R^5)}+\|\Phi_{tt}(a)\|_{L^2(\R^5)}+|I|^{1/2}\|\Phi_t\|_{L^2L^5(I\times \R^5)}.
\endaligned
\end{equation*}
Recalling our notational conventions, it follows that  for $|I|\sim 1$, yet sufficiently small, we have
\[
M(I)\lesssim \|\Phi_t(a)\|_{\dot{H}^{1}( \R^5)}+\|\Phi_{tt}(a)\|_{L^2(\R^5)},
\]
where
\[
M(I):=\|\Phi_t\|_{L^2L^{5}(I\times \R^5)}+\|\Phi_t\|_{L^\infty\dot{H}^{1}(I\times \R^5)}+\|\Phi_{tt}\|_{L^\infty L^2(I \times \R^5)}.
\]
Therefore, if one chooses $T_1$ to be the maximal length of an interval for which the previous bound holds true, then
\[
\aligned
M([(k+1)T_1, (k+2)T_1])&\lesssim \|\Phi_t((k+1)T_1)\|_{\dot{H}^{1}( \R^5)}+\|\Phi_{tt}((k+1)T_1)\|_{L^2(\R^5)}\\&\lesssim M([kT_1, (k+1)T_1])
\endaligned
\]
holds true for as long as 
\[
0\leq kT_1<(k+2)T_1<T,
\] 
with $k$ being a nonnegative integer. Given that  the assumptions of Theorem \ref{main-th-v-2} guarantee, through the Appendix, that
\[
M([0,T_1])\lesssim \|\Phi_t(0)\|_{\dot{H}^{1}( \R^5)}+\|\Phi_{tt}(0)\|_{L^2(\R^5)}\lesssim 1,
\]
it is clear that the previous facts lead to 
\[\|\Phi_t\|_{L^2L^{5}}+\|\Phi_t\|_{L^\infty\dot{H}^1}+\|\Phi_{tt}\|_{L^\infty L^2}\lesssim 1.
\]

Now, we can go back to \eqref{Box-Phit-l1l2} and claim 
\[
\|\Box\Phi_t\|_{L^1L^2}\lesssim 1.
\] 
Coupled to \eqref{Phit-h1-str}, this estimate implies that
\[
\|\Phi_t\|_{L^pL^q}\lesssim 1
\]
also holds true for $(p,q)\neq (2,5)$ satisfying \eqref{pq-h2} and, thus, finishes the proof.
\end{proof}


\subsection{$\dot{H}^2$ regularity for $\Phi$ and improved decay estimates}

Using the newfound regularities for $\Phi_t$ and $\Phi_{tt}$ in conjunction with the wave equation \eqref{Box-Phi}, we show that, at this stage of the analysis, $\Phi$ has $\dot{H}^2$ regularity. 

\begin{prop}
Under the assumptions of Theorem \ref{main-th-v-2}, it is true that
\begin{equation}
\|\Phi\|_{L^\infty\dot{H}^2}\lesssim 1. 
\label{Phi-lih2}
\end{equation}
\end{prop}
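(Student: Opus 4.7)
The plan is to use the wave equation \eqref{Box-Phi} to rewrite $\Delta\Phi=\Phi_{tt}-\Box\Phi$, so that $\|\Phi\|_{L^\infty\dot H^2}\sim\|\Delta\Phi\|_{L^\infty L^2}$ decomposes into two contributions. The bound $\|\Phi_{tt}\|_{L^\infty L^2}\lesssim 1$ is already supplied by Proposition \ref{prop-Phit-h1}, so the task reduces to proving $\|\Box\Phi\|_{L^\infty L^2}\lesssim 1$. I would attack this by localizing into the regions $\{r\gtrsim 1\}$ and $\{r\ll 1\}$ and applying the asymptotic formulas \eqref{rll1}--\eqref{rgrt1} that were just established.

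On $\{r\gtrsim 1\}$, the inequality $|\Box\Phi-\varphi_{\geq 1/2}/r^3|\lesssim |v|/r^2$ combined with $|v|\lesssim r^{-2}$ from \eqref{decay-v} makes the integrand manifestly $L^2$-summable, and $\varphi_{\geq 1/2}/r^3$ is directly in $L^2(\R^5)$. On $\{r\ll 1\}$, \eqref{rll1} gives $|\Box\Phi|\lesssim |v|^3+v^4$. The cubic piece is tame: $|v|\lesssim r^{-3/4}$ yields $|v|^6\lesssim r^{-9/2}$, and the $r^4\,dr$-weighted integral $\int_0^{r_0} r^{-1/2}\,dr$ converges.

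The main obstacle is the quartic term $v^4$, since pointwise estimates alone produce $v^8\lesssim r^{-6}$, which fails to be $r^4\,dr$-integrable at the origin. To circumvent this, I would invoke the bound $v^2\lesssim|\Phi|$ valid for $r\ll 1$, which is the very relation that yielded the refined decay in the derivation of \eqref{decay-v}. This gives the pointwise domination $v^4\lesssim\Phi^2$, whence $\|v^4\|_{L^2(\{r\ll 1\})}\lesssim\|\Phi\|_{L^4(\R^5)}^2$. The required $L^4$ control of $\Phi$ I would secure by interpolation between $\Phi\in L^\infty L^{10/3}$, which follows from $\|\Phi\|_{L^\infty H^1}\lesssim 1$ via \eqref{Sob-gen}, and $\Phi\in L^\infty L^5$. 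The latter in turn follows from the fundamental theorem of calculus together with the Strichartz bound $\|\Phi_t\|_{L^2 L^5}\lesssim 1$ coming from \eqref{Phit-h1} (the pair $(p,q)=(2,5)$ satisfies \eqref{pq-h2}), combined with an initial-data bound $\|\Phi(0)\|_{L^5(\R^5)}\lesssim 1$ that the Appendix is designed to furnish.

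Assembling all the contributions over both regions produces $\|\Box\Phi\|_{L^\infty L^2}\lesssim 1$, and consequently $\|\Phi\|_{L^\infty\dot H^2}\lesssim 1$. The only nontrivial step in the chain is the $v^4$ estimate described above; once that is handled by passing through the Strichartz-derived $L^5$ integrability of $\Phi$, the remaining bookkeeping is straightforward.
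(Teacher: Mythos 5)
Your argument is correct, but it reaches the key intermediate bound $\|\Phi\|_{L^\infty L^4}\lesssim 1$ by a genuinely different route than the paper. The paper does not split $\Delta\Phi=\Phi_{tt}-\Box\Phi$ directly at the level of $L^2$; instead it introduces a Littlewood--Paley decomposition of $D^{\sigma-1}\Delta\Phi$ into low and high frequencies, estimates the $\varphi_{<1}\Box\Phi$ contribution in $L^{p(\sigma)}$ with $\tfrac{1}{p(\sigma)}=\tfrac12+\tfrac{1-\sigma}{5}$, and uses the same pointwise input $|\varphi_{<1}\Box\Phi|\lesssim\Phi^2$ together with $\|\Phi\|_{L^\infty L^{2p(\sigma)}}\lesssim\|\Phi\|_{L^\infty H^1}$ to first extract $\|\Phi\|_{L^\infty\dot H^{3/2}}\lesssim 1$ (valid for $0\le\sigma\le 1/2$); it then upgrades through the embedding $H^{5/4}(\R^5)\subset L^4$ and runs the scheme once more with $\sigma=1$ to close the argument. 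You instead obtain the $L^4$ control by interpolating between $L^{10/3}$ (from $H^1$) and $L^5$, securing the $L^5$ endpoint via the fundamental theorem of calculus and the Strichartz bound $\|\Phi_t\|_{L^2L^5}\lesssim 1$ from Proposition \ref{prop-Phit-h1}, which is a clean alternative that avoids the frequency decomposition and the intermediate $\dot H^{3/2}$ step. The one thing you should write out rather than attribute to the Appendix is the datum bound $\|\Phi(0)\|_{L^5(\R^5)}\lesssim 1$: it is not among the three claims proved there, but it does follow quickly from $|\Phi(0)|\lesssim|v(0)|+v^2(0)+|\varphi_{\geq 1/2}|/r^3$, the embeddings $\dot H^{3/2}(\R^5)\subset L^5$ and $\dot H^{2}(\R^5)\subset L^{10}$, and $v(0)\in H^s(\R^5)$ with $s>7/2$. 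With that one gap filled, your argument is a valid, and arguably more direct, alternative to the paper's bootstrap.
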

\begin{proof}
For $0\leq \sigma\leq 1$, we perform the decomposition
\begin{equation}
D^{\sigma-1}\Delta \Phi=(1-P_{>1})D^{\sigma-1}\Delta \Phi+P_{>1}D^{\sigma-1}\Delta \Phi,
\label{dphi-decomp}
\end{equation}
where $P_{>1}$ is a Fourier multiplier localizing the spatial frequencies to the region $\{|\xi|\geq 2\}$. For the first term on the right-hand side, we can easily infer based on \eqref{Phi-h1} that
\begin{equation}
\|(1-P_{>1})D^{\sigma-1}\Delta \Phi\|_{L^\infty L^2}\lesssim \|\Phi\|_{L^\infty \dot{H}^1}\lesssim 1.
\label{delta-p-1}
\end{equation}
For the second term, we rely on \eqref{Phit-h1}, the Bernstein's inequalities \eqref{Bernstein}, and the Sobolev embeddings \eqref{Sob-gen} to derive that 
\begin{equation}
\aligned
\|P_{>1}D^{\sigma-1}\Delta \Phi\|_{L^\infty L^2}
&\lesssim \|P_{>1}D^{\sigma-1} \Phi_{tt}\|_{L^\infty L^2}+\|P_{>1}D^{\sigma-1}\Box \Phi\|_{L^\infty L^2}\\
&\lesssim \|\Phi_{tt}\|_{L^\infty L^2}+\|P_{>1}D^{\sigma-1}\left(\varphi_{>1}\Box \Phi\right)\|_{L^\infty L^2}\\
&\qquad\qquad+\|P_{>1}D^{\sigma-1}\left(\varphi_{<1}\Box \Phi\right)\|_{L^\infty L^2}\\
&\lesssim 1+\|\varphi_{>1}\Box \Phi\|_{L^\infty L^2}+\|\varphi_{<1}\Box \Phi\|_{L^\infty L^{p(\sigma)}},
\endaligned
\label{delta-p-2}
\end{equation}
where
\begin{equation}
\frac{1}{p(\sigma)}=\frac{1}{2}+\frac{1-\sigma}{5}.
\label{ps}
\end{equation}
By employing \eqref{rgrt1} and \eqref{decay-v}, we now deduce that
\begin{equation}
\|\varphi_{>1}\Box \Phi\|_{L^\infty L^2}\lesssim \left\|\frac{\varphi_{\geq 1/2}}{r^3}\right\|_{L^\infty L^2}+\left\| \varphi_{>1}\frac{|v|}{r^2}\right\|_{L^\infty L^2}\lesssim 1.
\label{delta-p-3}
\end{equation}
For the norm involving $\varphi_{<1}\Box \Phi$, if we use \eqref{rll1}, we obtain, on the account of the Sobolev embeddings \eqref{Sob-gen} and \eqref{Phi-h1}, that
\begin{equation}
\|\varphi_{<1}\Box \Phi\|_{L^\infty L^{p(\sigma)}}\lesssim \|\varphi_{<1} \Phi^2\|_{L^\infty L^{p(\sigma)}}\lesssim \|\Phi\|^2_{L^\infty L^{2p(\sigma)}}\lesssim \|\Phi\|^2_{L^\infty H^1}\lesssim 1
\label{delta-p-4}
\end{equation}
holds true if $2\leq 2p(\sigma)\leq 10/3$. Due to \eqref{ps}, it is easy to see that this happens when $0\leq \sigma \leq 1/2$ and, consequently,
\[
\|\Phi\|_{L^\infty \dot{H}^{3/2}}\lesssim 1.
\] 
Therefore, based on the Sobolev embeddings \eqref{Sob-gen}, \eqref{Phi-h1}, and \eqref{dphi-decomp}-\eqref{delta-p-4}, we infer that
\begin{equation*}
\|\Phi\|_{L^\infty L^4}\lesssim\|\Phi\|_{L^\infty H^{5/4}}\lesssim 1,
\end{equation*} 
It follows that we can upgrade \eqref{delta-p-4} to read
\[
\|\varphi_{<1}\Box \Phi\|_{L^\infty L^2} \lesssim \|\Phi\|^2_{L^\infty L^{4}}\lesssim 1,
\]
which, coupled to \eqref{dphi-decomp}-\eqref{delta-p-3}, allows us to derive that
\[
\|\Delta \Phi\|_{L^\infty L^2}\lesssim 1,
\]
thus proving \eqref{Phi-lih2}.
\end{proof}

Next,  a direct application of the radial Sobolev inequalities \eqref{rad-Sob-1} and \eqref{rad-Sob-2} and of the asymptotic equation \eqref{rll1}, in the context of the $\dot{H}^2$ regularity for $\Phi$, leads to the following upgrade for the previous decay estimates satisfied by $\Phi$, $v$, and $\tA(r,v)-1$. 

\begin{prop}
Under the assumptions of Theorem \ref{main-th-v-2}, we have
\begin{align}
|\Phi(t,r)|&\lesssim \min\left\{\frac{1}{r^2}, \frac{1}{r^{1/2}}\right\},\label{decay-Phi-2}\\
|v(t,r)|&\lesssim \min\left\{\frac{1}{r^2}, \frac{1}{r^{1/4}}\right\},\label{decay-v-2}\\
|\tA(r,v)-1|&\lesssim \min\left\{\frac{1}{r^4}, \frac{1}{r^{1/2}}\right\}.\label{decay-A-2}
\end{align}
\end{prop}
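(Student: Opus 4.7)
The plan is to feed the just-proved $\dot{H}^2$ bound \eqref{Phi-lih2} into the radial Sobolev inequalities and then propagate the sharper decay for $\Phi$ to $v$ and to $\tA(r,v)-1$ via the asymptotic relations already recorded in \eqref{rll1} and \eqref{rgrt1}. For the decay of $\Phi$ in \eqref{decay-Phi-2}, I would apply \eqref{rad-Sob-1} with $n=5$ and $\sigma=2$, which yields $r^{1/2}|\Phi(t,r)|\lesssim \|\Phi\|_{\dot H^2(\R^5)}\lesssim 1$; combining this with the $H^1$-based bound $|\Phi|\lesssim r^{-2}$ already obtained in \eqref{decay-Phi} gives the two-sided decay.

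For \eqref{decay-v-2}, the regime $r\gtrsim 1$ is already handled by \eqref{decay-v}, so the only new piece is the regime $r\ll 1$. There I would invoke \eqref{rll1}, which says $|\Phi|\sim |v|+v^2$. Plugging in the improved $|\Phi|\lesssim r^{-1/2}$ forces both $|v|\lesssim r^{-1/2}$ and $v^2\lesssim r^{-1/2}$, and the latter is the binding constraint for small $r$, giving $|v|\lesssim r^{-1/4}$. Taken together with $|v|\lesssim r^{-2}$ for $r\gtrsim 1$, this produces \eqref{decay-v-2}.

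For \eqref{decay-A-2}, recall $\tA(r,v)-1 = 2\sin^2(rv+\varphi(r))/r^2$ from \eqref{ta-formula}. In the regime $r<1$ we have $\varphi(r)=N_1\pi$, so $\sin^2(rv+\varphi)=\sin^2(rv)$ and thus $|\tA-1|\le 2v^2\lesssim r^{-1/2}$ by the newly established bound on $v$. In the regime $r\geq 2$ we have $\varphi(r)=0$, and the elementary bound $|\sin(rv)|\leq r|v|$ gives $|\tA-1|\leq 2v^2\lesssim r^{-4}$ from \eqref{decay-v-2}. On the compact transition region $1\leq r\leq 2$ both $r^{-4}$ and $r^{-1/2}$ are $\sim 1$, and the trivial $|\tA-1|\leq 2/r^2\lesssim 1$ suffices. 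Patching these three ranges yields the claim.

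No step looks difficult here; the whole proposition is essentially a one-line consequence of feeding \eqref{Phi-lih2} into \eqref{rad-Sob-1}. The only place that requires a little care is the small-$r$ bound on $\tA(r,v)-1$, where one must remember that $\varphi\equiv N_1\pi$ near the origin so that the $\sin^2$ can be absorbed into $(rv)^2$ without picking up an additive $\varphi$ contribution; once that is noted, everything is a direct computation.
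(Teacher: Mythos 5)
Your proposal is correct and follows exactly the route the paper indicates (which is given only as a one-sentence remark before the proposition): feed the new $\dot{H}^2$ bound \eqref{Phi-lih2} into the radial Sobolev estimate \eqref{rad-Sob-1} (with $n=5$, $\sigma=2$) to obtain $|\Phi|\lesssim r^{-1/2}$, then push this through the asymptotic $|\Phi|\sim|v|+v^2$ from \eqref{rll1} to upgrade the small-$r$ decay of $v$, and finally use $\tA-1=2\sin^2(rv+\varphi)/r^2$ with $|\sin(rv)|\leq r|v|$ (valid near $0$ and near $\infty$ since $\varphi$ is a multiple of $\pi$ there) to upgrade the decay of $\tA-1$. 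The only ingredient you do not cite explicitly is \eqref{rad-Sob-2}, but it enters implicitly through the old bounds \eqref{decay-Phi} and \eqref{decay-v} that you reuse for the $r\gtrsim 1$ regime, so the argument is complete.
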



\section{$H^3$-type analysis} 

Here, we show that
\begin{equation}
\|\Phi\|_{L^\infty \dot{H}^3}+\|\Phi_t\|_{L^\infty \dot{H}^2}+\|\Phi_{tt}\|_{L^\infty \dot{H}^1}+\|\Phi_{ttt}\|_{L^\infty L^2}\lesssim 1.
\label{H3-norm}
\end{equation}
The approach is similar to the one used in the previous section, in the sense that we start by writing a wave equation for $\Phi_{tt}$ and we analyze it through the Strichartz estimates \eqref{Str-gen}. This yields the expected regularity for  both $\Phi_{tt}$ and $\Phi_{ttt}$. Next, we tie these regularities to equations satisfied by $\Phi_t$ and $\Phi_r$ to deduce that $\Phi_t\in L^\infty \dot{H}^2$ and $\Phi\in L^\infty \dot{H}^3$, respectively. Finally, we continue to improve the decay rates for  $\Phi$, $v$, and $\tA(r,v)-1$.

\vspace{.1in}
\hrule

\subsection{Derivation of $\dot{H}^1$ and $L^2$ regularities for  $\Phi_{tt}$ and $\Phi_{ttt}$}

By differentiating \eqref{Phit-final} and \eqref{Box-Phit} with respect to $t$, we obtain
\begin{equation}
\Phi_{tt}=\tA^{1/2}(r,v)v_{tt}+ \tA^{-1/2}(r,v)\,\frac{\sin(2u)}{r}\,v^2_{t}
\label{Phitt-final}
\end{equation}
and
\begin{equation}
\aligned
\Box_{5+1}\Phi_{tt}=& \left(\tA(r,v)-1\right)\left(\frac{3}{2}+\frac{\tA^{-2}(r,v)}{2}\right)\Phi_{tt}\\
&+\left(\frac{3}{2}-\frac{\tA^{-2}(r,v)}{2}+\tA^{-3}(r,v)\right)\partial_t(\tA(r,v))\Phi_{t}.
\endaligned
\label{Box-Phitt}
\end{equation}
It is important to notice that \eqref{Phit-final} and \eqref{decay-A} imply
\begin{equation}
|\partial_t(\tA(r,v))|=2\frac{|\sin(2u)|}{r}\,|v_t|\lesssim \tA^{1/2}(r,v)\,|v_t|=|\Phi_t|
\label{tA-Phit}
\end{equation}
and 
\[
\frac{3}{2}-\frac{\tA^{-2}(r,v)}{2}+\tA^{-3}(r,v)\sim 1,
\]
which lead to 
\begin{equation}
|\Box\Phi_{tt}|\lesssim\Phi_{t}^2+ |(\tA(r,v)-1)\Phi_{tt}|.
\label{bd-box-phitt}
\end{equation}
These are all the necessary prerequisites to argue for the desired regularities for $\Phi_{tt}$ and $\Phi_{ttt}$.

\begin{prop}
Under the assumptions of Theorem \ref{main-th-v-2},
\begin{equation}
\|\Phi_{tt}\|_{L^p L^q}+\|\Phi_{tt}\|_{L^\infty \dot{H}^{1}}+\|\Phi_{ttt}\|_{L^\infty L^2}\lesssim 1
\label{Phitt-h1}
\end{equation}
holds true for all pairs $(p,q)$ satisfying \eqref{pq-h2}.
\label{prop-Phitt-h1}
\end{prop}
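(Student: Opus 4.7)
The plan is to mirror the interval-splitting Strichartz argument already used in the proof of Proposition \ref{prop-Phit-h1}, applied now to the wave equation \eqref{Box-Phitt} for $\Phi_{tt}$. I will apply the Strichartz estimate \eqref{Str-gen} with $\sigma=1$ and dual exponents $(\bar p',\bar q') = (1,2)$ on an arbitrary subinterval $I=[a,b]\subset[0,T)$, obtaining
\[
\|\Phi_{tt}\|_{L^pL^q(I\times\R^5)} + \|\Phi_{tt}\|_{L^\infty\dot H^1(I\times\R^5)} + \|\Phi_{ttt}\|_{L^\infty L^2(I\times\R^5)} \lesssim \|\Phi_{tt}(a)\|_{\dot H^1(\R^5)} + \|\Phi_{ttt}(a)\|_{L^2(\R^5)} + \|\Box\Phi_{tt}\|_{L^1L^2(I\times\R^5)}
\]
for every admissible $(p,q)$ satisfying \eqref{pq-h2}. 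The entire argument is then reduced to estimating the forcing term on the right.

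By the pointwise bound \eqref{bd-box-phitt}, the forcing splits into $\|\Phi_t^2\|_{L^1L^2}$ and $\|(\tA(r,v)-1)\Phi_{tt}\|_{L^1L^2}$. For the quadratic-in-$\Phi_t$ contribution I would note that $(p,q)=(4,4)$ is admissible in \eqref{pq-h2}, so Proposition \ref{prop-Phit-h1} yields $\|\Phi_t\|_{L^4L^4}\lesssim 1$, and Hölder in time gives
\[
\|\Phi_t^2\|_{L^1L^2(I\times\R^5)} \,=\, \|\Phi_t\|_{L^2L^4(I\times\R^5)}^2 \,\leq\, |I|^{1/2}\|\Phi_t\|_{L^4L^4(I\times\R^5)}^2 \,\lesssim\, |I|^{1/2}.
\]
For the $(\tA-1)\Phi_{tt}$ piece, Hölder in space together with \eqref{ta-103}, exactly as in \eqref{Box-Phit-l1l2}, produces
\[
\|(\tA(r,v)-1)\Phi_{tt}\|_{L^1L^2(I\times\R^5)} \lesssim \|\tA(r,v)-1\|_{L^2L^{10/3}(I\times\R^5)}\|\Phi_{tt}\|_{L^2L^5(I\times\R^5)} \lesssim |I|^{1/2}\|\Phi_{tt}\|_{L^2L^5(I\times\R^5)}.
\]

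With these two estimates in hand, specializing the Strichartz bound to the pair $(p,q)=(2,5)$ and choosing $|I|\sim 1$ small enough to absorb the $|I|^{1/2}\|\Phi_{tt}\|_{L^2L^5(I\times\R^5)}$ factor on the right, I obtain
\[
M(I) \,\lesssim\, \|\Phi_{tt}(a)\|_{\dot H^1(\R^5)} + \|\Phi_{ttt}(a)\|_{L^2(\R^5)},
\]
where $M(I) := \|\Phi_{tt}\|_{L^2L^5(I\times\R^5)} + \|\Phi_{tt}\|_{L^\infty\dot H^1(I\times\R^5)} + \|\Phi_{ttt}\|_{L^\infty L^2(I\times\R^5)}$. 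I would then partition $[0,T)$ into finitely many intervals of that common length, starting from the initial step $I = [0,T_2]$ in which the Appendix supplies $\|\Phi_{tt}(0)\|_{\dot H^1} + \|\Phi_{ttt}(0)\|_{L^2}\lesssim 1$, and propagate the bound from one interval to the next. Once the global control $\|\Box\Phi_{tt}\|_{L^1L^2}\lesssim 1$ has been established, a final application of \eqref{Str-gen} promotes the estimate to the full admissible range of $(p,q)$ in \eqref{pq-h2}.

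The only genuinely delicate points are (i) ensuring that the quadratic term $\partial_t(\tA)\Phi_t$ appearing in \eqref{Box-Phitt} does not produce a worse nonlinearity than $\Phi_t^2$, which is already taken care of by \eqref{tA-Phit}, and (ii) verifying that $\|\Phi_{tt}(0)\|_{\dot H^1}$ and $\|\Phi_{ttt}(0)\|_{L^2}$ are finite under the hypothesis $(v_0,v_1)\in H^s\times H^{s-1}$ with $s>7/2$; as stated in the introduction, the bookkeeping needed for this last point is precisely the role of the Appendix and is the subtlest part of the argument.
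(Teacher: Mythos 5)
Your proposal is correct and follows essentially the same route as the paper: Strichartz \eqref{Str-gen} with $\sigma=1$ applied to \eqref{Box-Phitt}, the pointwise control \eqref{bd-box-phitt}, absorption of the $(\tA-1)\Phi_{tt}$ piece via \eqref{ta-103} on short intervals, and the interval-continuation scheme seeded by the Appendix bound \eqref{phitt0-h1}. The only (immaterial) difference is the Hölder split for $\Phi_t^2$: you use $L^4\times L^4\to L^2$ together with the $(4,4)$ Strichartz pair, whereas the paper uses $L^{10/3}\times L^5\to L^2$ together with Sobolev embedding and the $(2,5)$ pair — both are covered by Proposition \ref{prop-Phit-h1} and give the same conclusion.
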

\begin{proof}
The argument is a carbon copy of the one for Proposition \ref{prop-Phit-h1}, in the sense that we start by writing down the Strichartz estimates \eqref{Str-gen} for the equation \eqref{Box-Phitt}, i.e., 
\begin{equation*}
\aligned
\|\Phi_{tt}\|&_{L^pL^q(I\times \R^5)}+\|\Phi_{tt}\|_{L^\infty\dot{H}^1(I\times \R^5)}+\|\Phi_{ttt}\|_{L^\infty L^2(I\times \R^5)}\\
&\qquad\lesssim \|\Phi_{tt}(a)\|_{\dot{H}^1( \R^5)}+\|\Phi_{ttt}(a)\|_{L^2(\R^5)}+\|\Box\Phi_{tt}\|_{L^1L^2(I\times \R^5)},
\endaligned
\end{equation*}
which are valid in the same context as the one for which \eqref{Phit-h1-str} holds true. Following this, we apply \eqref{bd-box-phitt}, the Sobolev embeddings \eqref{Sob-gen}, \eqref{Phit-h1}, and \eqref{ta-103} to deduce
\begin{equation*}
\aligned
\|\Box\Phi_{tt}\|_{L^1L^2(I\times \R^5)}
\lesssim\, &\|\Phi_t\|_{L^2L^{10/3}(I\times \R^5)}\|\Phi_t\|_{L^2L^5(I\times \R^5)}\\&+ \|\tA(r,v)-1\|_{L^2L^{10/3}(I\times \R^5)}\|\Phi_{tt}\|_{L^2L^5(I\times \R^5)}\\
\lesssim\, &|I|^{1/2}\left(\|\Phi_t\|_{L^\infty H^{1}(I\times \R^5)}+ \|\Phi_{tt}\|_{L^2L^5(I\times \R^5)}\right)\\
\lesssim\, &1+ |I|^{1/2}\|\Phi_{tt}\|_{L^2L^5(I\times \R^5)}.
\endaligned
\end{equation*}
This leads to 
\[
\aligned
\|\Phi_{tt}\|_{L^2L^{5}(I\times \R^5)}+\|\Phi_{tt}\|_{L^\infty\dot{H}^{1}(I\times \R^5)}&+\|\Phi_{ttt}\|_{L^\infty L^2(I \times \R^5)}\\
&\lesssim 1+ \|\Phi_{tt}(a)\|_{\dot{H}^{1}( \R^5)}+\|\Phi_{ttt}(a)\|_{L^2(\R^5)},
\endaligned
\]
where $|I|\sim 1$, yet small enough. Subsequently, by also invoking the Appendix, we derive 
\[
\|\Phi_{tt}\|_{L^2L^{5}}+\|\Phi_{tt}\|_{L^\infty\dot{H}^1}+\|\Phi_{ttt}\|_{L^\infty L^2}\lesssim 1,
\]
which suffices to claim that
\[
\|\Phi_{tt}\|_{L^pL^q}\lesssim 1
\]
holds true for pairs $(p,q)\neq (2,5)$ satisfying \eqref{pq-h2}.
\end{proof}

\subsection{$\dot{H}^3$ and $\dot{H}^2$ regularities for  $\Phi$ and $\Phi_{t}$ and further improvement of the decay information}

As an immediate consequence of the previous proposition, we obtain the corresponding Sobolev regularity for  $\Phi_{t}$.

\begin{prop}
Under the assumptions of Theorem \ref{main-th-v-2}, it is true that
\begin{equation}
\|\Phi_t\|_{L^\infty\dot{H}^2}\lesssim 1. 
\label{Phit-lih2}
\end{equation}
\end{prop}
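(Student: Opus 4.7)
The strategy parallels the derivation of the $\dot H^2$ regularity of $\Phi$ in \eqref{Phi-lih2}. Differentiating the wave equation \eqref{Box-Phi} in time gave \eqref{Box-Phit}, which rearranges to
\[
\Delta\Phi_t \,=\, \Phi_{ttt} \,-\, \Box\Phi_t \,=\, \Phi_{ttt} \,-\, (\tA(r,v)-1)\!\left(\tfrac{3}{2} + \tfrac{\tA^{-2}(r,v)}{2}\right)\Phi_t,
\]
and the plan is to bound $\|D^{\sigma-1}\Delta\Phi_t\|_{L^\infty L^2}$ for $\sigma\in[0,1]$, aiming ultimately to reach $\sigma=1$ and hence \eqref{Phit-lih2}.

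\textbf{Main steps.} For each $\sigma$, I would decompose $D^{\sigma-1}\Delta\Phi_t = (1-P_{>1})D^{\sigma-1}\Delta\Phi_t + P_{>1}D^{\sigma-1}\Delta\Phi_t$. The low-frequency piece is harmless, being controlled by $\|\Phi_t\|_{L^\infty\dot H^1}\lesssim 1$ from \eqref{Phit-h1}. On the high-frequency piece, the $\Phi_{ttt}$ contribution is bounded by $\|\Phi_{ttt}\|_{L^\infty L^2}\lesssim 1$ via \eqref{Phitt-h1}. For $\Box\Phi_t$, I would split with $\varphi_{>1}+\varphi_{<1}$: on $\{r\geq 1/2\}$ the decay \eqref{decay-A-2} forces $|\tA-1|\lesssim r^{-4}\lesssim 1$, giving $\|\varphi_{>1}\Box\Phi_t\|_{L^\infty L^2}\lesssim\|\Phi_t\|_{L^\infty L^2}\lesssim 1$. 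On $\{r\leq 1\}$ I would invoke the Bernstein-plus-Sobolev device from the proof of \eqref{Phi-lih2} to get
\[
\|P_{>1}D^{\sigma-1}(\varphi_{<1}\Box\Phi_t)\|_{L^2}\,\lesssim\,\|\varphi_{<1}\Box\Phi_t\|_{L^{p(\sigma)}},
\]
with $p(\sigma)$ as in \eqref{ps}, and then apply H\"older against the energy-level bound \eqref{ta-103}:
\[
\|\varphi_{<1}\Box\Phi_t\|_{L^{p(\sigma)}}\,\lesssim\,\|\tA-1\|_{L^{10/3}}\|\Phi_t\|_{L^{q(\sigma)}}\,\lesssim\,\|\Phi_t\|_{L^{q(\sigma)}},
\]
where a short computation gives $1/q(\sigma)=(2-\sigma)/5$.

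\textbf{Bootstrap.} Exactly as in the argument for \eqref{Phi-lih2}, one pass does not suffice: at $\sigma=1$ the bound would require $\Phi_t\in L^\infty L^5$, which is not among the estimates currently in hand. Instead, I would first take $\sigma=1/2$, for which $q(1/2)=10/3$, and $\Phi_t\in L^\infty L^{10/3}$ is immediate from $\Phi_t\in L^\infty H^1$ via \eqref{Sob-gen}. This intermediate pass yields $\|\Phi_t\|_{L^\infty\dot H^{3/2}}\lesssim 1$, and then the Sobolev embedding $\dot H^{3/2}(\R^5)\hookrightarrow L^5(\R^5)$ (again from \eqref{Sob-gen}) supplies the missing $L^\infty L^5$ bound. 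Re-running the argument at $\sigma=1$ then delivers \eqref{Phit-lih2}.

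\textbf{Main obstacle.} The delicate point is not any single inequality but the necessity of the bootstrap: the H\"older split has to be arranged so that the $L^{10/3}$ norm of $\tA-1$ coming from the conserved energy \eqref{ta-103} absorbs the spatial-integrability loss in $\Box\Phi_t$, and the residual $L^{q(\sigma)}$ norm on $\Phi_t$ must line up exactly with the regularity produced by the previous stage. The endpoint $\sigma=1$ is not forgiving in this regard, which is why an intermediate $\dot H^{3/2}$ step is essential.
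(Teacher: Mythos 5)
Your proof is correct, but it is considerably more elaborate than the one in the paper, and the extra machinery (frequency localization, Bernstein-plus-Sobolev, and the two-pass bootstrap through $\dot H^{3/2}$) is actually unnecessary at this stage of the argument. The reason is that you reach for the \emph{energy-level} integrability $\|\tA(r,v)-1\|_{L^{10/3}}\lesssim 1$ from \eqref{ta-103}, which forces you to place $\Phi_t$ in $L^5$ and hence requires the bootstrap. But by the time this proposition is stated, the \emph{improved decay} bound \eqref{decay-A-2}, namely $|\tA(r,v)-1|\lesssim\min\{r^{-4},\,r^{-1/2}\}$, is already available, and a direct radial computation shows $\|\tA(r,v)-1\|_{L^\infty L^5}\lesssim 1$ (the exponent $-1/2$ near the origin and $-4$ at infinity are exactly enough). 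With that in hand, the paper's proof is a one-liner: write $\Delta\Phi_t=\Phi_{ttt}-\Box\Phi_t$, bound $\|\Phi_{ttt}\|_{L^\infty L^2}\lesssim 1$ from \eqref{Phitt-h1}, and then
\[
\|\Box\Phi_t\|_{L^\infty L^2}\lesssim\|\tA(r,v)-1\|_{L^\infty L^{5}}\,\|\Phi_t\|_{L^\infty L^{10/3}}\lesssim 1,
\]
where $\Phi_t\in L^\infty L^{10/3}$ follows immediately from $\Phi_t\in L^\infty H^1$ via \eqref{Sob-gen}. No frequency decomposition, no bootstrap. Your route is sound and self-contained, and it is instructive that it mirrors the proof of \eqref{Phi-lih2}; but the whole point of having upgraded the decay of $\tA-1$ between the $H^2$ and $H^3$ stages is precisely so that this proposition \emph{doesn't} need that apparatus.
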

\begin{proof}
We infer directly on the basis of \eqref{Box-Phit}, \eqref{ta-2}, \eqref{Phitt-h1}, \eqref{decay-A-2}, and \eqref{Phit-h1} that
\[
\aligned
\|\Phi_t\|_{L^\infty\dot{H}^2}\sim \|\Delta\Phi_t\|_{L^\infty L^2}&\lesssim \|\Phi_{ttt}\|_{L^\infty L^2}+\|\Box\Phi_t\|_{L^\infty L^2}\\
&\lesssim 1+\|\tA(r,v)-1\|_{L^\infty L^{5}}\|\Phi_t\|_{L^\infty L^{10/3}}\\
&\lesssim 1.
\endaligned
\] 
\end{proof}

In a similar, but more involved fashion, we derive the $\dot{H}^3$ regularity for  $\Phi$.

\begin{prop}
Under the assumptions of Theorem \ref{main-th-v-2}, it is true that
\begin{equation}
\|\Phi\|_{L^\infty\dot{H}^3}\lesssim 1. 
\label{Phi-lih3}
\end{equation}
\end{prop}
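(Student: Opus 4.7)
The plan is to argue algebraically from the wave equation \eqref{Box-Phi}. Since $\Delta=\partial_{tt}-\Box$ in $\mathbb{R}^{5+1}$, we have
\[
\|\Phi\|_{L^\infty\dot H^3}\sim \|\Delta\Phi\|_{L^\infty\dot H^1}\le \|\Phi_{tt}\|_{L^\infty\dot H^1}+\|\Box\Phi\|_{L^\infty\dot H^1},
\]
and the first term is already bounded by \eqref{Phitt-h1}. The entire argument thus reduces to proving $\|\Box\Phi\|_{L^\infty\dot H^1}\lesssim 1$.

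From the explicit formula \eqref{Box-Phi}, it suffices to control $\|\Phi\|_{L^\infty\dot H^1}$ (finite by \eqref{Phi-h1}), the fixed radial function $\varphi_{\geq 1/2}/r^3$ in $\dot H^1$, and the nonlinear integral term, which we handle by splitting via $1=\varphi_{<1}+\varphi_{>1}$. In the far-field region ($r\gtrsim 1$), the asymptotic \eqref{rgrt1} combined with the explicit formula \eqref{Box-Phi} shows that differentiating in $x$ produces factors of the form $v_r/r^2$, $v/r^3$, plus derivatives of the smooth cutoffs; these are controlled in $L^\infty L^2$ via \eqref{v-h1}, \eqref{vr-l2}, the decay estimate \eqref{decay-v-2}, and Hardy's inequality \eqref{Hardy}.

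In the near-field region ($r\ll 1$), use the rewriting derived in the proof of \eqref{rll1}, namely
\[
\Box\Phi-\frac{\varphi_{\geq 1/2}}{r^3}\,=\,\int_0^v(\tA-1)\left(\tfrac{3}{2}\tA^{1/2}+\tfrac{1}{2}\tA^{-3/2}\right)\,dy,
\]
and differentiate under the integral via the chain rule. This produces a main boundary contribution of pointwise size $\lesssim v^2|\nabla v|$, coming from the upper limit and the fact that $\tA-1$ is $O(v^2)$ on the support of $\varphi_{<1}$, together with a parametric interior contribution estimated through \eqref{ta-r-l1}. Since $|v|\lesssim 1$ in this region by \eqref{decay-v-2} and $\|\nabla v\|_{L^\infty L^2}\lesssim 1$ by \eqref{v-h1}, both contributions are bounded in $L^\infty L^2$ (using, if needed, Moser \eqref{Moser} and Sobolev embedding \eqref{Sob-gen} to absorb the smooth $v$-dependent factors), which closes the argument. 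The main obstacle is precisely this near-field chain-rule accounting, where the integrand depends on $v$ both through the upper limit and parametrically through $\tA(r,y)$, so one must carefully separate the two and verify that the singular $r$-derivative of the integrand is tamed by the extra power of $v$ it carries.
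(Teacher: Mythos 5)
Your reduction $\|\Phi\|_{L^\infty\dot H^3}\lesssim\|\Phi_{tt}\|_{L^\infty\dot H^1}+\|\Box\Phi\|_{L^\infty\dot H^1}$ and the subsequent split of $\partial_r\Box\Phi$ into a boundary (chain-rule) term proportional to $v_r$ and a parametric interior term handled via \eqref{ta-r-g1}--\eqref{ta-r-l1} both mirror the paper. The far-field estimate is fine, and the interior-integral piece in the near-field is also handled in essentially the paper's way.

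The gap is in the near-field estimate of the boundary term. You justify the $L^2$ bound on the term $\sim v^2\,|\nabla v|$ (really $(\tA(r,v)-1)\tA^{1/2}(r,v)v_r$) by asserting $|v|\lesssim 1$ for $r\ll 1$ from \eqref{decay-v-2}. But at this point in the argument \eqref{decay-v-2} only gives $|v(t,r)|\lesssim \min\{r^{-2},r^{-1/4}\}$, which blows up as $r\to 0$; the uniform bound $|v|\lesssim 1$ appears only in \eqref{decay-v-3}, which is a \emph{consequence} of the $\dot H^3$ regularity you are trying to prove. Moreover, even the available decay $v^2\lesssim r^{-1/2}$ does not close the estimate with only $\|\nabla v\|_{L^\infty L^2}\lesssim 1$: you would be left needing $\int_0^1 v_r^2\, r^{7/2}\,dr\lesssim 1$, a weighted bound strictly stronger than \eqref{vr-l2}, which requires some Hardy-type gain and hence more regularity than $v\in H^1$. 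The paper circumvents this by invoking the previously established $\|\Phi\|_{L^\infty\dot H^2}\lesssim 1$ (Proposition giving \eqref{Phi-lih2}): writing $\tA^{1/2}(r,v)v_r=\Phi_r-\tfrac12\int_0^v\tA^{-1/2}\tA_r\,dy-\varphi_{\geq 1/2}/r^3$ from \eqref{phir-vr}, it bounds $\|\tA^{1/2}(r,v)v_r\|_{L^\infty L^{10/3}}\lesssim\|\Phi_r\|_{L^\infty H^1}+1\lesssim 1$ via Sobolev, and then uses H\"older against $\|\tA(r,v)-1\|_{L^\infty L^5}\lesssim 1$ (from \eqref{decay-A-2}) to close the $L^2$ bound. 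Without this transfer of regularity from $\Phi$ to $\tA^{1/2}v_r$, your near-field estimate does not go through.
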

\begin{proof}
We start by arguing that, due to \eqref{Phitt-h1},
\begin{equation}
\aligned
\|\Phi\|_{L^\infty\dot{H}^3}\sim \|D\Delta\Phi\|_{L^\infty L^2}&\lesssim \|D\Phi_{tt}\|_{L^\infty L^2}+\|D\Box\Phi\|_{L^\infty L^2}\\
&\sim \|\Phi_{tt}\|_{L^\infty \dot{H}^1}+\|\partial_r\Box\Phi\|_{L^\infty L^2}\\
&\lesssim 1+\|\partial_r\Box\Phi\|_{L^\infty L^2}.
\endaligned
\label{dr-box-phi-l2}
\end{equation}
A direct computation using \eqref{Box-Phi} yields
\begin{equation*}
\aligned
\partial_r\Box\Phi=\,&\frac{1}{4}\int_0^v \left\{\left(9\tA^{1/2}-3\tA^{-1/2}-\tA^{-3/2}+3\tA^{-5/2}\right)\tA_r\right\}dy\\&+\left(\tA(r,v)-1\right)\left(\frac{3}{2}+\frac{\tA^{-2}(r,v)}{2}\right)\tA^{1/2}(r,v)v_r+\frac{\varphi_{\geq 1/2}}{r^3}
\endaligned
\end{equation*}
and, taking into account the formula \eqref{ta-formula}, we deduce that
\begin{equation}
\left|\partial_r\Box\Phi\right|\lesssim \int_0^{|v|} \left\{\tA^{1/2}|\tA_r|\right\}dy+|\tA(r,v)-1|\tA^{1/2}(r,v)|v_r|+\frac{|\varphi_{\geq 1/2}|}{r^3}.
\label{dr-box-phi}
\end{equation}
It is easy to check that
\begin{equation}
\left\|\frac{\varphi_{\geq 1/2}}{r^3}\right\|_{L^\infty L^2}\lesssim 1
\label{phi-g12}
\end{equation}
and, as a result, we turn our attention to the other two terms on the right-hand side of the previous bound.

For the integral term, we rely on \eqref{ta-formula}, \eqref{ta-r}-\eqref{ta-r-l1}, and \eqref{decay-v-2} to infer that
\begin{equation}
\int_0^{|v|}\left\{\tA^{1/2}|\tA_r|\right\}dy\,\lesssim \,\frac{|v|+v^2}{r^2}\lesssim \frac{1}{r^4}
\label{int-tar-g1}
\end{equation}
holds true when $r\geq 1$, and
\begin{equation}
\int_0^{|v|}\left\{\tA^{1/2}|\tA_r|\right\}dy\,\lesssim \,r(|v|^5+v^6)\lesssim \frac{1}{r^{1/2}}
\label{int-tar-l1}
\end{equation}
is valid when $r<1$. Hence, it is immediate to claim that
\begin{equation}
\left\|\int_0^{|v|}\left\{\tA^{1/2}|\tA_r|\right\}dy\right\|_{L^\infty L^2}\lesssim 1.
\label{int-tar}
\end{equation}

Finally, for the term in \eqref{dr-box-phi} involving $v_r$, we obtain from \eqref{Phi-final} that
\begin{equation}
\Phi_r=\tA^{1/2}(r,v)v_r+\frac{1}{2}\int_0^v\left\{\tA^{-1/2}\tA_r\right\}dy+\frac{\varphi_{\geq 1/2}}{r^3}.
\label{phir-vr}
\end{equation}
It is straightforward to argue that
\[
\left\|\frac{\varphi_{\geq 1/2}}{r^3}\right\|_{L^\infty L^{10/3}}\lesssim 1
\]
and, using \eqref{int-tar-g1} and \eqref{int-tar-l1}, we also have that
\[
\left\|\int_0^{v}\left\{\tA^{-1/2}\tA_r\right\}dy\right\|_{L^\infty L^{10/3}}\lesssim 1.
\]
Furthermore, due to the Sobolev embeddings \eqref{Sob-gen} and \eqref{Phi-lih2}, we derive
\[
\|\Phi_r\|_{L^\infty L^{10/3}}\lesssim \|\Phi_r\|_{L^\infty H^{1}}\lesssim 1. 
\]
Therefore, as the combined result of the last four mathematical statements, we infer that 
\[
\|\tA^{1/2}(r,v)v_r\|_{L^\infty L^{10/3}}\lesssim 1.
\]
If we couple this estimate with \eqref{decay-A-2}, we conclude that
\[
\|(\tA(r,v)-1)\tA^{1/2}(r,v)v_r\|_{L^\infty L^2}\lesssim \|\tA(r,v)-1\|_{L^\infty L^5}\|\tA^{1/2}(r,v)v_r\|_{L^\infty L^{10/3}}\lesssim 1.
\]
Together with \eqref{dr-box-phi-l2}, \eqref{dr-box-phi}, \eqref{phi-g12}, and \eqref{int-tar}, this bounds yields \eqref{Phi-lih3}, thus finishing the proof.
\end{proof}

As a corollary of this result, we can argue as in the section devoted to the $H^2$-type analysis and further improve the decay estimates for $\Phi$, $v$, and $\tA(r,v)-1$.

\begin{prop}
Under the assumptions of Theorem \ref{main-th-v-2}, we have
\begin{align}
|\Phi(t,r)|&\lesssim \frac{1}{1+r^2},\label{decay-Phi-3}\\
|v(t,r)|&\lesssim \frac{1}{1+r^2},\label{decay-v-3}\\
|\tA(r,v)-1|&\lesssim \frac{1}{1+r^4}.\label{decay-A-3}
\end{align}
\end{prop}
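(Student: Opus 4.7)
The plan is to mimic the three-step improvement executed in the $H^2$-type section (which passed from the coarse decay \eqref{decay-Phi} up to \eqref{decay-Phi-2}--\eqref{decay-A-2}), but now fed by the stronger regularity \eqref{Phi-lih3}. The key observation is that at the $\dot{H}^2$ level the radial Sobolev inequality \eqref{rad-Sob-1} with $\sigma=2$, $n=5$ only produced $|\Phi|\lesssim r^{-1/2}$ near the origin, whereas at the $\dot{H}^3$ level we have crossed the Sobolev embedding threshold $5/2$ and can push the small-$r$ bound all the way to $L^\infty$.

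First I would combine $\Phi\in L^\infty L^2$ from \eqref{philil2} with $\Phi\in L^\infty\dot{H}^3$ from \eqref{Phi-lih3} to place $\Phi\in L^\infty H^3(\R^5)$. Since $3>5/2$, the classical Sobolev embedding \eqref{Sob-classic} then yields $\|\Phi\|_{L^\infty L^\infty}\lesssim 1$, which gives the small-$r$ half of \eqref{decay-Phi-3}; the large-$r$ half $|\Phi(t,r)|\lesssim r^{-2}$ is already contained in \eqref{decay-Phi-2}. Together they imply \eqref{decay-Phi-3}.

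Next, I would read off \eqref{decay-v-3} from \eqref{decay-Phi-3} via the asymptotics \eqref{rll1}--\eqref{rgrt1}. For $r\ll 1$, \eqref{rll1} gives $|v|+v^2\sim|\Phi|\lesssim 1$, and a quick dichotomy (if $|v|>1$ then $v^2>|v|$, whence $v^2\lesssim 1$, a contradiction) forces $|v|\lesssim 1$. For $r\gtrsim 1$, \eqref{rgrt1} gives $|v|\lesssim|\Phi|+r^{-3}\lesssim r^{-2}$. For \eqref{decay-A-3}, I would go back to the definition \eqref{ta-formula}. When $r\leq 1$, $\varphi(r)\equiv N_1\pi$, so $\sin(rv+\varphi)=\pm\sin(rv)$ and $|\sin z|\leq|z|$ yields $|\tA-1|=2\sin^2(rv)/r^2\leq 2v^2\lesssim 1$. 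When $r\geq 2$, $\varphi(r)=0$ and $|u|=|rv|\lesssim r^{-1}$, so $|\sin u|\leq|u|\lesssim r^{-1}$ and hence $|\tA-1|\lesssim r^{-4}$.

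I do not anticipate a genuine obstacle: the argument is essentially bookkeeping that recycles the structures already set up. The only mild subtlety is that \eqref{rll1} and \eqref{rgrt1} are stated only for $r\ll 1$ and $r\gtrsim 1$, so the compact intermediate range must be absorbed separately; this is handled by observing that the target profiles $(1+r^2)^{-1}$ and $(1+r^4)^{-1}$ are bounded above and below by positive constants on any such $r$-interval, reducing matters on that set to the uniform bounds $|\Phi|,|v|,|\tA-1|\lesssim 1$, which follow from $\|\Phi\|_{L^\infty L^\infty}\lesssim 1$, the second asymptotic of \eqref{rgrt1}, and the trivial bound $|\tA-1|\leq 2r^{-2}$, respectively.
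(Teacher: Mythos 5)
Your proposal is correct and fills in exactly the argument the paper leaves implicit (the paper merely asserts "we can argue as in the section devoted to the $H^2$-type analysis"), with the one genuine adaptation being rightly identified: since $\dot H^3$ lies above the radial-Sobolev threshold $n/2=5/2$ of \eqref{rad-Sob-1}, the near-origin bound now comes from the classical Sobolev embedding $H^3(\R^5)\subset L^\infty(\R^5)$ rather than \eqref{rad-Sob-1}, while the far-field $r^{-2}$ bound is already contained in \eqref{decay-Phi-2}. One small citation slip: to get $|v|\lesssim 1$ on the intermediate range you should invoke the first (not second) comparison in \eqref{rgrt1}, or more simply the universal bound \eqref{v-bd} together with the previously established \eqref{decay-v-2}, which already controls $|v|$ uniformly there; the rest of the bookkeeping for \eqref{decay-v-3} and \eqref{decay-A-3} is exactly as you wrote.
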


\begin{remark}
We notice that the decay bound \eqref{decay-v-3} easily implies the portion of the main estimate to be proved about $v$ (i.e., \eqref{livr}), which doesn't involve its gradient:
\begin{equation}
\|(1+r)|v|\|_{L^\infty_{t,x}}\leq  \|(1+r^2)|v|\|_{L^\infty_{t,x}}\lesssim 1.
\label{li-v}
\end{equation}
\end{remark}

\begin{remark}
In what concerns $\nabla v$, we can show with the facts obtained so far that
\begin{equation}
\|(1+r)|\nabla v|\|_{L^\infty_{t,x}([0,T)\times \{r\geq 1\})}\lesssim 1
\label{li-nabla-v-1}
\end{equation}
holds true. First, we can rewrite \eqref{phir-vr} in the form
\begin{equation}
\aligned
v_r\,=\,\tA^{-1/2}(r,v)\left(\Phi_r-\frac{1}{2}\int_{0}^{v}\left\{\tA^{-1/2}\tA_r\right\}dy-\frac{\varphi_{\geq 1/2}}{r^3}\right),
\endaligned
\label{vr-phir}
\end{equation}
and we can infer from \eqref{Phit-final} that
\begin{equation}
v_t\,=\,\tA^{-1/2}(r,v)\Phi_t.
\label{vt-phit}
\end{equation}
Thus, in the regime when $r\geq 1$, we deduce based on \eqref{ta-r}, \eqref{ta-r-g1}, \eqref{decay-A}, and \eqref{decay-v} that
\[
\aligned
r(|v_r|+|v_t|)&\lesssim r\left(|\Phi_r|+|\Phi_t|+\int_0^{|v|}\left\{\frac{1+|y|}{r^2}\right\}\,dy+\frac{\left|\varphi_{\geq 1/2}\right|}{r^3}\right)\\
&\lesssim r\left(|\Phi_r|+|\Phi_t|+\frac{|v|+v^2}{r^2}+\frac{1}{r^3}\right)\\
&\lesssim r\left(|\Phi_r|+|\Phi_t|+\frac{1}{r^3}\right).
\endaligned
\]
Finally, with the help of \eqref{rad-Sob-1} and \eqref{H3-norm}, we derive
\[
r(|v_r|+|v_t|)\lesssim \|\Phi_r\|_{L^\infty\dot{H}^{3/2}} +  \|\Phi_t\|_{L^\infty\dot{H}^{3/2}} +1\lesssim 1,
\]
which yields \eqref{li-nabla-v-1}.
\end{remark}


\section{Final estimates and conclusion of the argument}

On the basis of the previous two remarks, specifically the estimates \eqref{li-v} and \eqref{li-nabla-v-1}, in order to conclude the argument for \eqref{livr} (and thus finish the proof of Theorem \ref{main-th-v-2}), we are left to show that   
\begin{equation}
\|\nabla v\|_{L^\infty_{t,x}([0,T)\times \{r< 1\})}\lesssim 1
\label{li-nabla-v-2}
\end{equation}
is valid. As in the derivation of \eqref{li-nabla-v-1}, we start by relying on \eqref{vr-phir}, \eqref{vt-phit}, \eqref{ta-r}, \eqref{ta-r-l1}, \eqref{decay-A}, and \eqref{decay-v-3} to infer that
\[
\aligned
|v_r|+|v_t|&\lesssim |\Phi_r|+|\Phi_t|+\int_0^{|v|}\left\{ry^4\right\}\,dy+\frac{\left|\varphi_{\geq 1/2}\right|}{r^3}\\
&\lesssim |\Phi_r|+|\Phi_t|+r|v|^5+1\\
&\lesssim |\Phi_r|+|\Phi_t|+1
\endaligned
\]
holds true when $r<1$. This implies
\begin{equation*}
\|\nabla v\|_{L^\infty_{t,x}([0,T)\times \{r< 1\})}\lesssim \|\Phi_r\|_{L^\infty_{t,x}} +  \|\Phi_t\|_{L^\infty_{t,x}} +1
\end{equation*}
and, with the help of \eqref{Phi-h1} and the classical Sobolev embedding \eqref{Sob-classic}, we obtain the desired bound if we show that
\begin{equation}
\|\Phi\|_{L^\infty \dot{H}^{s}}+\|\Phi_t\|_{L^\infty \dot{H}^{s-1}}\lesssim 1
\label{phi-phit-hs}
\end{equation}
is valid.

The strategy is to prove first the finiteness of the norm involving $\Phi_t$ by using energy estimates applied to equation \eqref{Box-Phit}. As a byproduct of this argument, we also get to control $\|\Phi_{tt}\|_{L^\infty \dot{H}^{s-2}}$, which, coupled to equation \eqref{Box-Phi} satisfied by $\Phi$, allows us to deduce the finiteness of $\|\Phi\|_{L^\infty \dot{H}^{s}}$ and thus finish the proof of \eqref{phi-phit-hs}. 

\vspace{.1in}
\hrule


\subsection{New qualitative bounds for $v$ and $\tA(r,v)$}

As one can imagine from the structure of the equations involved in this step of the main argument (i.e., \eqref{Box-Phi} and \eqref{Box-Phit}), it is important to derive more qualitative information on $v$ and $\tA(r,v)$, in addition to  \eqref{v-h1}, \eqref{decay-v-3}, and \eqref{decay-A-3}. First, we prove the following result.

\begin{prop}
Under the assumptions of Theorem \ref{main-th-v-2}, we have
\begin{align}
\|\nabla v\|_{L^\infty L^4}+&\|\Delta v\|_{L^\infty L^2}\lesssim 1,\label{nabla-delta-v}\\
\|\nabla (\tA(r,v))\|_{L^\infty L^4}+&\|\Delta( \tA(r,v))\|_{L^\infty L^2}\lesssim 1\label{nabla-delta-A}.
\end{align}
\end{prop}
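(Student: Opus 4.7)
The plan is to handle the two parts of the proposition in the natural order: first prove the bounds on $\nabla v$ and $\Delta v$, then deduce the corresponding bounds on $\tA(r,v)$ via the chain rule. The starting point is the pair of pointwise identities
\begin{equation*}
v_t \,=\, \tA^{-1/2}(r,v)\,\Phi_t, \qquad v_r \,=\, \tA^{-1/2}(r,v)\Bigl(\Phi_r - \frac{1}{2}\int_0^v \tA^{-1/2}\tA_r\,dy - \frac{\varphi_{\geq 1/2}}{r^3}\Bigr)
\end{equation*}
from \eqref{Phit-final} and \eqref{phir-vr}. Writing $G(r,v)$ for the lower-order correction in parentheses, the bound $\tA^{-1/2}\leq 1$ immediately yields $|\nabla v|\lesssim |\nabla\Phi|+|G(r,v)|$.

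For $\|\nabla v\|_{L^\infty L^4}\lesssim 1$, I would interpolate between \eqref{Phi-h1} and \eqref{Phi-lih3} to obtain $\Phi\in L^\infty H^{9/4}$, at which point the Sobolev embedding $\dot H^{5/4}(\R^5)\hookrightarrow L^4$ delivers $\|\nabla\Phi\|_{L^\infty L^4}\lesssim 1$. For $G(r,v)$, the pointwise bounds \eqref{int-tar-g1}--\eqref{int-tar-l1}, together with the support and decay of $\varphi_{\geq 1/2}/r^3$, give $\|G\|_{L^\infty L^4}\lesssim 1$ by direct integration in the two regions $\{r<1\}$ and $\{r\geq 1\}$.

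For $\|\Delta v\|_{L^\infty L^2}\lesssim 1$, the cleanest route is to differentiate the identity $\Phi = F(r,v) + \varphi_{\geq 1/2}/r^3$, with $F(r,y):=\int_0^y \tA^{1/2}(r,z)\,dz$, twice in space and to apply the chain rule in $r$ to extract
\begin{equation*}
\tA^{1/2}(r,v)\,\Delta v \,=\, \Delta\Phi - \Delta\!\Bigl(\frac{\varphi_{\geq 1/2}}{r^3}\Bigr) - F_{rr}(r,v) - \frac{4}{r}F_r(r,v) - 2F_{rv}(r,v)\,v_r - F_{vv}(r,v)\,v_r^2.
\end{equation*}
Since $\tA^{1/2}\geq 1$, it suffices to bound the right-hand side in $L^\infty L^2$. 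The two $\Delta$-terms are immediate from \eqref{Phi-lih2} and the support of $\varphi_{\geq 1/2}$; the factor $F_{vv}=\tA^{-1/2}\sin(2u)/r$ is pointwise bounded by $(1+r^2)^{-1}$ (because $u=rv+\varphi$ together with \eqref{decay-v-3} gives $|\sin u|/r\lesssim\min\{1,1/r^2\}$), so the $F_{vv}v_r^2$ piece is absorbed by the $L^4$-bound on $\nabla v$ just obtained; and $F_{rv}=\tfrac12\tA^{-1/2}\tA_r$, together with the second-derivative analogues of the Maclaurin expansions \eqref{ta-r-g1}--\eqref{ta-r-l1}, controls the remaining $F_{rv}v_r$, $F_{rr}$, and $(4/r)F_r$ terms after inserting the decay \eqref{decay-v-3} at $y=v$.

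Finally, for \eqref{nabla-delta-A}, I would apply the chain rule to the closed form $\tA(r,v)=1+2\sin^2(rv+\varphi(r))/r^2$, which expresses $\nabla\tA$ and $\Delta\tA$ in terms of $\nabla v$, $\Delta v$, $|\nabla v|^2$, and partial derivatives of $\tA$ at fixed $y$; the latter satisfy the same type of decay as in \eqref{ta-r-g1}--\eqref{ta-r-l1}, and a H\"older argument then closes the estimate. The main technical obstacle throughout is making the $(4/r)F_r$ and $F_{rr}$ contributions sit in $L^2$ near $r=0$, where the $1/r$-type singularities are worst: this is resolved by the small-$r$ bound \eqref{ta-r-l1}, which provides the decisive gain $|\tA_r|\lesssim r y^4$, so that after integration in $y\in[0,v]$ and multiplication by $1/r$ the bound $|v|\lesssim 1$ from \eqref{decay-v-3} leaves an $L^2$-integrable remainder.
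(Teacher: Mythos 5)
Your approach to $\|\nabla v\|_{L^\infty L^4}$ coincides with the paper's: both pass through the interpolated bound $\|\Phi\|_{L^\infty H^{9/4}}\lesssim 1$, the Sobolev embedding $\dot H^{5/4}(\R^5)\hookrightarrow L^4$, and the pointwise bounds \eqref{ta-r-g1}--\eqref{ta-r-l1} on the lower-order corrections in \eqref{vr-phir} and \eqref{Phit-final}. For the final part, \eqref{nabla-delta-A}, you and the paper both apply the chain rule to the explicit formula for $\tA$; cf.\ \eqref{delta-ta}.

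Where your route genuinely differs is in the proof of $\|\Delta v\|_{L^\infty L^2}\lesssim 1$. You compute $\Delta v$ by differentiating the constitutive identity $\Phi = F(r,v)+\varphi_{\geq 1/2}/r^3$ twice in the radial variable and isolating $\tA^{1/2}\Delta v$ against $\Delta\Phi$ and lower-order terms $F_{rr}$, $\tfrac{4}{r}F_r$, $F_{rv}v_r$, $F_{vv}v_r^2$. The paper instead writes $\Delta v = v_{tt}-\Box v$, bounds $v_{tt}$ using \eqref{Phitt-final} (so the $\Phi_{tt}$ information already in hand), and bounds $\Box v$ by returning to the wave equation \eqref{main-v} and estimating the nonlinearity $N(r,rv,\nabla(rv))$ term by term via \eqref{Nv-l1}. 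Your route never touches the PDE for $v$ or the structure of $N$, and only needs $\Delta\Phi\in L^\infty L^2$ from \eqref{Phi-lih2} together with pointwise control of the mixed and second $r$-derivatives of $F$; its price is that you must carry out the Maclaurin analysis one derivative further (to $\tA_{rr}$) than the paper records in \eqref{ta-r-g1}--\eqref{ta-r-l1}, which you correctly flag but do not write out. That analysis is routine (the same kind of bound, $|\tA_{rr}|\lesssim y^4$ for $r<1$ and $\lesssim (1+y^2)/r^2$ for $r\geq 1$, is what the paper derives later when computing $\Delta\Box\Phi$), so this is a gap in detail rather than in idea. One small advantage of the paper's route is that the intermediate bound $\|v_{tt}\|_{L^\infty L^2}\lesssim 1$ comes out for free, whereas yours produces $\Delta v$ directly; for the purposes of this proposition the two are equivalent.
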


\begin{proof}
We start by arguing for the finiteness of both $L^\infty L^4$ norms. Using \eqref{tA-Phit}, the Sobolev embeddings \eqref{Sob-gen}, \eqref{phitlil2}, and \eqref{Phit-lih2}, we immediately deduce
\begin{equation}
\| \partial_t(\tA(r,v))\|_{L^\infty L^4}\lesssim \|\Phi_t\|_{L^\infty L^4}\lesssim \|\Phi_t\|_{L^\infty H^{5/4}}\lesssim 1,
\label{tA-lil4}
\end{equation}
which, together with \eqref{Phit-final}, implies
\begin{equation}
\| v_t\|_{L^\infty L^4}\lesssim \|\Phi_t\|_{L^\infty L^4}\lesssim 1.
\label{vt-lil4}
\end{equation}

Next, if we use \eqref{vr-phir} jointly with \eqref{ta-r}-\eqref{ta-r-l1}, then we obtain
\begin{equation*}
|v_r|\lesssim |\Phi_r|+\frac{\left|\varphi_{\geq 1/2}\right|}{r^3}+
\begin{cases}
\frac{|v|+v^2}{r^2}, \, &r\geq 1,\\
r|v|^5, \, &r< 1.
\end{cases}
\end{equation*}
Consequently, by also factoring in the Sobolev embeddings \eqref{Sob-gen}, \eqref{decay-v-3}, \eqref{philil2}, and \eqref{Phi-lih3}, it follows that
\begin{equation}
\aligned
\| v_r\|_{L^\infty L^4}\lesssim &\,\|\Phi_r\|_{L^\infty L^4}+\left\|\frac{1}{r^3}\right\|_{L^\infty L^4([0,T)\times \{r\geq 1/2\})}\\&+ \left\|\frac{1}{r^4}\right\|_{L^\infty L^4([0,T)\times \{r\geq 1\})}+ \left\|r\right\|_{L^\infty L^4([0,T)\times \{r< 1\})}\\\lesssim &\, \|\Phi\|_{L^\infty H^{9/4}}+1\\ \lesssim &\, 1.
\endaligned
\label{vr-lil4}
\end{equation}
To conclude this part of the argument, if we rely on the formula \eqref{ta-formula}, \eqref{ta-r}-\eqref{ta-r-l1}, \eqref{decay-v-3}, and the previous estimate, then we can infer that
\begin{equation*}
\aligned
\| \partial_r&(\tA(r,v))\|_{L^\infty L^4}\\&\lesssim \left\|\frac{1+|v|}{r^2}+\frac{|v_r|}{r}\right\|_{L^\infty L^4([0,T)\times \{r\geq 1\})}+ \left\|rv^4+|v||v_r|\right\|_{L^\infty L^4([0,T)\times \{r< 1\})}\\&\lesssim \left\|\frac{1}{r^2}\right\|_{L^\infty L^4([0,T)\times \{r\geq 1\})}+ \left\|r\right\|_{L^\infty L^4([0,T)\times \{r< 1\})}+\left\|v_r\right\|_{L^\infty L^4}\\ &\lesssim 1.
\endaligned
\end{equation*}

Next, we prove the finiteness of the second norm in \eqref{nabla-delta-v} by showing that
\begin{equation}
\|v_{tt}\|_{L^\infty L^2}+\|\Box v\|_{L^\infty L^2}\lesssim 1.
\label{vtt-dv}
\end{equation}
We deduce directly from \eqref{Phitt-final} that
\[
|v_{tt}|\lesssim |\Phi_{tt}|+v_t^2
\]
and, subsequently, on the basis of \eqref{Phit-h1} and \eqref{vt-lil4}, we infer that
\[
\|v_{tt}\|_{L^\infty L^2}\lesssim \|\Phi_{tt}\|_{L^\infty L^2}+\|v_t\|_{L^\infty L^4}^2\lesssim 1.
\]

For estimating the $L^\infty L^2$ norm of $\Box v$, we rely on \eqref{main-v} and analyze separately each term on the right-hand side of that equation. First, we can easily argue by using the definitions of $\varphi$ and $\varphi_{>1}$ and \eqref{v-l2} that
\begin{equation*}
\left\| \frac{1}{r}\Delta_3 \varphi\right\|_{L^\infty L^2}+\left\|\frac{2}{r^2}\varphi_{>1} v\right\|_{L^\infty L^2} \lesssim \left\|\frac{1}{r}\right\|_{L^\infty L^2([0,T)\times \{1\leq r\leq 2\})}+\|v\|_{L^\infty L^2}\lesssim 1.
\end{equation*}
Next, due to \eqref{main-2}, \eqref{utov}, and \eqref{decay-v-3}, we derive
\begin{equation*}
\aligned
\frac{1}{r}\left|\varphi_{>1}N(r, rv+\varphi, \nabla(rv+\varphi))\right|&\lesssim \frac{1}{r}|\varphi_{>1}| \left(\frac{1+|\nabla(rv+\varphi)|^2}{r^2}+\frac{|v+rv_r+\varphi_r|}{r^3}\right)\\
&\lesssim |\varphi_{>1}| \left(\frac{1}{r^3}+\frac{|\nabla v|^2}{r}+\frac{|v|}{r^4}+\frac{|v_r|}{r^3}\right).
\endaligned
\end{equation*}
Hence, by applying \eqref{v-h1}, \eqref{vt-lil4}, and \eqref{vr-lil4}, we obtain that
\begin{equation*}
\aligned
&\left\|\frac{1}{r}\varphi_{>1}N(r, rv+\varphi, \nabla(rv+\varphi))\right\|_{L^\infty L^2}\\
&\qquad\qquad\lesssim \left\|\frac{1}{r^3}\right\|_{L^\infty L^2([0,T)\times \{ r\geq 1/2\})}+\|\nabla v\|^2_{L^\infty L^4}+\|v\|_{L^\infty L^2}+\|v_r\|_{L^\infty L^2}\\&\qquad\qquad\lesssim 1.
\endaligned
\end{equation*}
Finally, on the basis of \eqref{Nv-l1}, we deduce
\begin{equation*}
\left|\varphi_{<1}\left(\frac{1}{r}N(r, rv, \nabla(rv))+\frac{2}{r^2}v\right)\right|\lesssim |\varphi_{<1}|\left(|v|^3+|v|^5+|v||\nabla v|^2+rv^4|v_r|\right)
\end{equation*}
and, consequently, 
\begin{equation*}
\aligned
&\left\|\varphi_{<1}\left(\frac{1}{r}N(r, rv, \nabla(rv))+\frac{2}{r^2}v\right)\right\|_{L^\infty L^2}\\
&\qquad \qquad\qquad\lesssim \left(\|v\|_{L^\infty_{t,x}}^2+\|v\|_{L^\infty_{t,x}}^4 \right)\|v\|_{L^\infty L^2}+\|v\|_{L^\infty_{t,x}}\|\nabla v\|^2_{L^\infty L^4}\\&\qquad \qquad\qquad\quad+\|v\|_{L^\infty_{t,x}}^{7/2}\|v\|^{1/2}_{L^\infty L^2}\|\nabla v\|_{L^\infty L^4}.
\endaligned
\end{equation*}
The desired estimate follows as a result of \eqref{decay-v-3}, \eqref{v-l2}, \eqref{vt-lil4}, and \eqref{vr-lil4}, and this concludes the argument for the finiteness of $\|\Box v\|_{L^\infty L^2}$. Thus, we have finished the proof of \eqref{nabla-delta-v}.

A direct computation based on \eqref{ta-formula} reveals that
\begin{equation}
\aligned
\Delta (\tA(r,v))=\ & 4\,\frac{\cos(2u)}{r^2}\left(v+rv_r+\varphi_r\right)^2+2\,\frac{\sin(2u)}{r^2}\left(r\Delta v-2v_r+\varphi_{rr}\right)\\&-4\,\frac{\sin^2u}{r^4}.
\endaligned
\label{delta-ta}
\end{equation}
If we invoke \eqref{Hardy}, the definition of $\varphi$, and \eqref{nabla-delta-v}, then we infer that
\begin{equation}
\aligned
\left\|\frac{\cos(2u)}{r^2}\left(v+rv_r+\varphi_r\right)^2\right\|_{L^\infty L^2}&\lesssim \left\|\frac{v}{r}\right\|^2_{L^\infty L^4}+\|v_r\|^2_{L^\infty L^4}+\left\|\frac{\varphi_r}{r}\right\|^2_{L^\infty L^4}\\&\lesssim \|\nabla_x v\|^2_{L^\infty L^4}+\left\|\frac{1}{r}\right\|^2_{L^\infty L^4([0,T)\times \{1\leq r\leq 2\})}\\&\lesssim 1.
\endaligned
\label{delta-ta-1}
\end{equation}
Next, using the elementary bound
\[
\frac{|\sin(2u)|}{r}\lesssim  \tA^{1/2}(r,v),
\] 
we derive 
\[
\left|\frac{\sin(2u)}{r^2}\left(r\Delta v-2v_r+\varphi_{rr}\right)\right|\lesssim  \tA^{1/2}(r,v)\left(|\Delta v|+\frac{|\nabla_xv|+|\varphi_{rr}|}{r}\right).
\]
Hence, due to \eqref{Hardy}, the definition of $\varphi$, \eqref{decay-A-3}, and \eqref{nabla-delta-v}, we obtain
\begin{equation}
\aligned
&\left\|\frac{\sin(2u)}{r^2}\left(r\Delta v-2v_r+\varphi_{rr}\right)\right\|_{L^\infty L^2}\\
&\qquad\quad\lesssim \left\| \tA^{1/2}(r,v)\right\|_{L^\infty_{t,x}}\left(\left\|\Delta v\right\|_{L^\infty L^2}+\left\|\frac{\nabla_xv}{r}\right\|_{L^\infty L^2}+\left\|\frac{\varphi_{rr}}{r}\right\|_{L^\infty L^2}\right)\\&\qquad\quad\lesssim  \left\| \tA^{1/2}(r,v)\right\|_{L^\infty_{t,x}}\left(\left\|\Delta v\right\|_{L^\infty L^2}+\left\|\frac{1}{r}\right\|_{L^\infty L^2([0,T)\times \{1\leq r\leq 2\})}\right)\\&\qquad\quad\lesssim 1.
\endaligned
\label{delta-ta-2}
\end{equation}
Following this, one more application of \eqref{decay-A-3} yields
\begin{equation*}
\left\|\frac{\sin^2u}{r^4}\right\|_{L^\infty L^2}\sim\left\|\frac{\tA(r,v)-1}{r^2}\right\|_{L^\infty L^2} \lesssim \left\|\frac{1}{(1+r^4)r^2}\right\|_{L^\infty L^2}\lesssim 1.
\end{equation*}
Jointly with \eqref{delta-ta}-\eqref{delta-ta-2}, this estimate implies 
\begin{equation*}
\|\Delta (\tA(r,v))\|_{L^\infty L^2}\lesssim 1,
\end{equation*}
which concludes the argument for \eqref{nabla-delta-A} and the whole proof of this proposition.
\end{proof}

Next, due to the presence of $\tA^{-2}$ on the right-hand side of \eqref{Box-Phit}, we also need estimates for derivatives of $\tA^{-1}$.

\begin{prop}
Under the assumptions of Theorem \ref{main-th-v-2}, the fixed-time bound
\begin{equation}
\|D^\sigma(\tA^{-1}(r,v))\|_{L^p(\R^5)}\lesssim \|D^\sigma(\tA(r,v))\|_{L^p(\R^5)}
\label{ta-inv-ta}
\end{equation}
holds true uniformly on $[0,T)$ for all $1<\sigma<2$ and $1<p<\infty$. 
\end{prop}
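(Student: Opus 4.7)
The plan is to exploit the algebraic identity $\tA(r,v)\cdot\tA^{-1}(r,v)=1$ together with the Kato--Ponce commutator estimate \eqref{Lbnz-1}. From \eqref{decay-A-3}, both $\tA$ and $\tA^{-1}$ lie uniformly in $L^\infty(\R^5)$, with $\|\tA-1\|_{L^\infty}+\|\tA^{-1}-1\|_{L^\infty}\lesssim 1$ on $[0,T)$. Applying \eqref{Lbnz-1} with $f=\tA$, $g=\tA^{-1}$, and $p_1=p_2=2p$ (admissible since $1<p<\infty$), and using $D^\sigma(\tA\cdot\tA^{-1})=D^\sigma(1)=0$, I would obtain the pointwise identity
\[
\tA\,D^\sigma(\tA^{-1})=-\tA^{-1}\,D^\sigma(\tA)-R,
\]
with $\|R\|_{L^p(\R^5)}\lesssim \|D^{\sigma/2}\tA\|_{L^{2p}(\R^5)}\|D^{\sigma/2}\tA^{-1}\|_{L^{2p}(\R^5)}$. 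Dividing through by $\tA\ge 1$ and taking the $L^p$ norm then gives
\[
\|D^\sigma(\tA^{-1})\|_{L^p(\R^5)}\lesssim \|D^\sigma(\tA)\|_{L^p(\R^5)}+\|D^{\sigma/2}\tA\|_{L^{2p}(\R^5)}\|D^{\sigma/2}\tA^{-1}\|_{L^{2p}(\R^5)}.
\]

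Next, I would dispose of the cross term by applying the interpolation bound \eqref{interpol-bd} with $\sigma_1=0$, $p_1=\infty$, $\sigma_2=\sigma$, $p_2=p$, and $\theta=1/2$ separately to $\tA-1$ and $\tA^{-1}-1$ (noting that $D^\tau$ annihilates additive constants, and that both of these functions are bounded uniformly in $L^\infty$). This yields
\[
\|D^{\sigma/2}\tA\|_{L^{2p}(\R^5)}\lesssim \|D^\sigma\tA\|_{L^p(\R^5)}^{1/2},\qquad \|D^{\sigma/2}\tA^{-1}\|_{L^{2p}(\R^5)}\lesssim \|D^\sigma\tA^{-1}\|_{L^p(\R^5)}^{1/2},
\]
with the implicit constants absorbing the $L^\infty$ factors. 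Substituting these estimates and invoking Young's inequality $ab\le \epsilon a^2+\tfrac{1}{4\epsilon}b^2$ with a sufficiently small parameter $\epsilon>0$, the resulting factor of $\|D^\sigma\tA^{-1}\|_{L^p(\R^5)}^{1/2}$ on the right can be absorbed into the left-hand side, yielding the claimed bound $\|D^\sigma(\tA^{-1})\|_{L^p(\R^5)}\lesssim \|D^\sigma(\tA)\|_{L^p(\R^5)}$.

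The main delicate point is the absorption step, which tacitly presupposes a priori finiteness of $\|D^\sigma(\tA^{-1})\|_{L^p(\R^5)}$. For a classical solution, at each fixed time $t$, both $v(t,\cdot)$ and $\tA(r,v(t,\cdot))$ are smooth with spatial decay inherited from \eqref{decay-A-3}, so the relevant norms are finite and the absorption is legitimate; if needed, a brief regularization argument, working with $(\tA+\epsilon)^{-1}$ for $\epsilon>0$ and then passing to the limit $\epsilon\to 0^+$, rigorously justifies the manipulation.
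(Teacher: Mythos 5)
Your proposal is correct, but it diverges from the paper's argument precisely at the point you flag as delicate. Both proofs begin the same way: apply the Kato--Ponce commutator bound \eqref{Lbnz-1} to $\tA\cdot\tA^{-1}\equiv 1$, use $D^\sigma 1=0$, and divide by $\tA\geq 1$ to obtain
\[
\|D^\sigma(\tA^{-1})\|_{L^p}\lesssim \|D^\sigma\tA\|_{L^p}+\|D^{\sigma/2}\tA\|_{L^{2p}}\|D^{\sigma/2}\tA^{-1}\|_{L^{2p}}.
\]
Where you then interpolate \emph{both} $D^{\sigma/2}\tA$ and $D^{\sigma/2}\tA^{-1}$ up to level $\sigma$ and absorb the resulting $\|D^\sigma\tA^{-1}\|_{L^p}^{1/2}$ factor via Young's inequality, the paper instead notes that the exponent $\sigma/2$ lies in $(1/2,1)$, so the one-sided Kato--Ponce estimate \eqref{Lbnz-2} applies to $\tA\cdot\tA^{-1}\equiv 1$ at level $\sigma/2$ and yields directly $\|D^{\sigma/2}\tA^{-1}\|_{L^{2p}}\lesssim\|D^{\sigma/2}\tA\|_{L^{2p}}\,\|\tA^{-1}\|_{L^\infty}\lesssim\|D^{\sigma/2}\tA\|_{L^{2p}}$. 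Plugging this in and interpolating only $\tA$ gives $\|D^\sigma(\tA^{-1})\|_{L^p}\lesssim\|D^\sigma\tA\|_{L^p}+\|D^{\sigma/2}\tA\|^2_{L^{2p}}\lesssim\|D^\sigma\tA\|_{L^p}$, with no term on the right involving $\tA^{-1}$ to absorb. The upshot is that the paper's route sidesteps entirely the a priori finiteness of $\|D^\sigma(\tA^{-1})\|_{L^p}$, whereas your route requires it (which you honestly acknowledge and patch with a regularization argument). Your approach is a legitimate alternative, but the paper's second commutator application is the cleaner move, and it is worth noticing the structural reason it works: $\sigma$ itself may be up to $2$, so \eqref{Lbnz-2} cannot be used at level $\sigma$, but $\sigma/2<1$ is exactly in its admissible range.
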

\begin{proof}
We start by applying the Kato-Ponce type inequalities \eqref{Lbnz-1} and \eqref{Lbnz-2} to deduce
\begin{equation*}
\aligned
\|D^\sigma(\tA\,\tA^{-1}(r,v))&-D^\sigma(\tA(r,v))\,\tA^{-1}(r,v)- D^\sigma(\tA^{-1}(r,v))\,\tA(r,v) \|_{L^p(\R^5)}\\
&\lesssim \|D^{\sigma/2}(\tA(r,v))\|_{L^{2p}(\R^5)}\|D^{\sigma/2}(\tA^{-1}(r,v))\|_{L^{2p}(\R^5)}
\endaligned
\end{equation*}
and 
\begin{equation*}
\aligned
\|D^{\sigma/2}(\tA\,\tA^{-1}(r,v))&- D^{\sigma/2}(\tA^{-1}(r,v))\,\tA(r,v)\|_{L^{2p}(\R^5)}\\
&\lesssim \|D^{\sigma/2}(\tA(r,v))\|_{L^{2p}(\R^5)}\|\tA^{-1}(r,v)\|_{L^{\infty}(\R^5)}.
\endaligned
\end{equation*}
Following this, due to $\tA\,\tA^{-1}(r,v)\equiv 1$ and $\sigma>0$, we notice that 
\begin{equation*}
D^\sigma(\tA\,\tA^{-1}(r,v))=D^{\sigma/2}(\tA\,\tA^{-1}(r,v))\equiv 1.
\end{equation*}
Moreover, it is easily seen that  \eqref{ta-formula} implies
\begin{equation}
\|\tA^{-1}(r,v)\|_{L^{\infty}(\R^5)}\lesssim 1.
\label{ta-inv-li}
\end{equation}
Using these two observations jointly with the previous two bounds and \eqref{decay-A-3}, we derive
\begin{equation*}
\aligned
\|D^\sigma(\tA^{-1}(r,v))\|_{L^p(\R^5)}\lesssim\, &\|D^\sigma(\tA(r,v))\|_{L^p(\R^5)}\\&+ \|D^{\sigma/2}(\tA(r,v))\|_{L^{2p}(\R^5)}\|D^{\sigma/2}(\tA^{-1}(r,v))\|_{L^{2p}(\R^5)}
\endaligned
\end{equation*}
and
\begin{equation*}
\aligned
\| D^{\sigma/2}(\tA^{-1}(r,v))\|_{L^{2p}(\R^5)}
\lesssim \|D^{\sigma/2}(\tA(r,v))\|_{L^{2p}(\R^5)}.
\endaligned
\end{equation*}
Consequently, we infer that
\begin{equation*}
\aligned
\|D^\sigma(\tA^{-1}(r,v))\|_{L^p(\R^5)}\lesssim\,\|D^\sigma(\tA(r,v))\|_{L^p(\R^5)}+ \|D^{\sigma/2}(\tA(r,v))\|^2_{L^{2p}(\R^5)}.
\endaligned
\end{equation*}
Finally, combining this estimate with an application of the interpolation inequality \eqref{interpol-bd} that yields
\begin{equation*}
\aligned
\|D^{\sigma/2}(\tA(r,v))\|^2_{L^{2p}(\R^5)}&\lesssim \|D^\sigma(\tA(r,v))\|_{L^p(\R^5)}\|\tA(r,v)\|_{L^{\infty}(\R^5)}\\
&\lesssim \|D^\sigma(\tA(r,v))\|_{L^p(\R^5)},
\endaligned
\end{equation*}
we reach the desired conclusion.
\end{proof}


\subsection{Improved Sobolev regularities for $\Phi_t$ and $\Phi$}
Now, we have all the prerequisites needed to upgrade the $H^2$ and $H^3$ regularities for  $\Phi_t$ and $\Phi$, respectively, to the level of the ones featured in \eqref{phi-phit-hs}. As outlined at the start of this section, we first focus our analysis on $\Phi_t$.

\begin{prop}
Under the assumptions of Theorem \ref{main-th-v-2}, with $s>7/2$ replaced by $7/2<s<18/5$,
\begin{equation}
\|\Phi_t\|_{L^\infty \dot{H}^{s-1}}+\|\Phi_{tt}\|_{L^\infty \dot{H}^{s-2}}\lesssim 1
\label{phit-phitt-hs}
\end{equation}
is valid.
\label{prop-phit}
\end{prop}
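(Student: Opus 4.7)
My plan is to apply the generalized energy estimate \eqref{en-hs} at Sobolev level $\sigma = s - 1$ to the nonlinear wave equation \eqref{Box-Phit}, which reduces the task to establishing
\[
\|\Box \Phi_t\|_{L^1 \dot{H}^{s-2}} \lesssim 1,
\]
since the initial-data contribution is controlled by the Appendix. As $T < \infty$, I will actually show the stronger bound $\|\Box \Phi_t\|_{L^\infty \dot{H}^{s-2}} \lesssim 1$ and then integrate in time. Writing $\Box \Phi_t = F(r,v)\, \Phi_t$ with $F := (\tA - 1)\bigl(\tfrac{3}{2} + \tfrac{1}{2}\tA^{-2}\bigr)$, I apply the fractional Leibniz rule \eqref{Lbnz-0} at order $s - 2$:
\[
\|D^{s-2}(F\, \Phi_t)\|_{L^2} \lesssim \|D^{s-2} F\|_{L^{p_1}} \|\Phi_t\|_{L^{q_1}} + \|F\|_{L^{p_2}} \|D^{s-2}\Phi_t\|_{L^{q_2}}.
\]

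I choose $p_1 = 4/(s-2)$, $q_1 = 4/(4-s)$ for the first term (so $1/p_1 + 1/q_1 = 1/2$), and $p_2 = 5/(4-s)$, $q_2 = 10/(2s-3)$ for the second (so $1/p_2 + 1/q_2 = 1/2$). The factor $\|F\|_{L^{p_2}}$ is controlled by the pointwise decay $|F| \lesssim 1/(1+r^4)$ coming from \eqref{decay-A-3}, which places $F$ in $L^{p_2}(\R^5)$ for every $p_2 > 5/4$. The quantity $\|D^{s-2}\Phi_t\|_{L^{q_2}}$ is dominated via the Sobolev embedding \eqref{Sob-gen} by $\|\Phi_t\|_{L^\infty \dot{H}^2}$, which is finite by \eqref{Phit-lih2}; likewise $\|\Phi_t\|_{L^{q_1}}$ is dominated by $\|\Phi_t\|_{L^\infty H^2}$, and the Sobolev admissibility condition $q_1 \le 10$ holds exactly when $s \le 18/5$. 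This is precisely where the upper restriction $s < 18/5$ in the statement originates.

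The heart of the argument is thus the bound $\|D^{s-2} F\|_{L^\infty L^{p_1}} \lesssim 1$. My plan is to first establish $F \in L^\infty \dot{H}^{1,4} \cap L^\infty \dot{H}^{2,2}$ by using the Leibniz rule \eqref{Lbnz-0} at integer orders together with \eqref{nabla-delta-A} for $\tA$, iterated applications of \eqref{ta-inv-ta} and Leibniz to pass from $\tA^{-1}$ to $\tA^{-2}$, and the $L^\infty$ bounds on $\tA - 1$ and $\tA^{-1}$ supplied by \eqref{decay-A-3} and \eqref{ta-inv-li}. Then the interpolation inequality \eqref{interpol-bd}, applied with $\theta = s - 3 \in (1/2, 3/5)$, matches $(1-\theta)\cdot 1 + \theta \cdot 2 = s-2$ and $(1-\theta)/4 + \theta/2 = (s-2)/4 = 1/p_1$, and upgrades the integer-level bounds to the required fractional control $\|F\|_{L^\infty \dot{H}^{s-2, p_1}} \lesssim 1$. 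This interpolation step, together with the bookkeeping needed to propagate Leibniz through the $\tA^{-2}$ factor, is the main technical obstacle; once it is in place, the rest of the argument assembles directly from \eqref{en-hs}, yielding the two norms of \eqref{phit-phitt-hs} simultaneously.
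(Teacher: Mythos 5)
Your proposal follows essentially the same route as the paper's proof: apply the generalized energy estimate \eqref{en-hs} at level $\sigma = s-1$ to \eqref{Box-Phit}, use the Appendix to control the data, apply fractional Leibniz \eqref{Lbnz-0} to $D^{s-2}(F\Phi_t)$, and close with the interpolation \eqref{interpol-bd} between $\dot{H}^{1,4}$ and $\dot{H}^{2,2}$ with $\theta = s-3$, together with \eqref{nabla-delta-A}. You also correctly locate the origin of the restriction $s<18/5$ in the Sobolev embedding needed for $\|\Phi_t\|_{L^{4/(4-s)}}$.

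There are two minor tactical differences worth flagging. First, for the Leibniz term carrying $D^{s-2}$ on $\Phi_t$, you choose $(p_2,q_2)=(5/(4-s),\,10/(2s-3))$ and then appeal to the pointwise decay of $F$ from \eqref{decay-A-3}, whereas the paper simply takes $(p_2,q_2)=(\infty,2)$ and uses $\|D^{s-2}\Phi_t\|_{L^2}\lesssim\|\Phi_t\|_{H^{8/5}}$; both splits are valid. Second, and more substantively: you propose to establish integer-order bounds $F\in L^\infty\dot{H}^{1,4}\cap L^\infty\dot{H}^{2,2}$ \emph{before} interpolating, and you cite ``iterated applications of \eqref{ta-inv-ta}'' in that step. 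Note, however, that \eqref{ta-inv-ta} is stated only for fractional orders $1<\sigma<2$ and so does not directly apply at $\sigma=1$ or $\sigma=2$; the paper avoids this by keeping the derivative at the fractional level $\sigma=s-2\in(3/2,8/5)$ when invoking \eqref{ta-inv-ta}, and only then interpolating $D^{s-2}\tA$ down to the integer-order data. Your order of operations is still workable, but for the integer-order control of $\nabla F$ and $\Delta F$ you should replace the appeal to \eqref{ta-inv-ta} by a direct chain- and product-rule computation (e.g.\ $\Delta \tA^{-1}=-\tA^{-2}\Delta\tA + 2\tA^{-3}|\nabla\tA|^2$), which is elementary given \eqref{nabla-delta-A}, \eqref{decay-A-3}, and \eqref{ta-inv-li}. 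With that adjustment the argument is complete.
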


\begin{proof}
We commence by relying on the energy-type estimate \eqref{en-hs} applied to \eqref{Box-Phit} to argue that
\begin{equation}
\aligned
\|\Phi_t\|_{L^\infty \dot{H}^{s-1}}+\|\Phi_{tt}\|_{L^\infty \dot{H}^{s-2}}\lesssim\, &\|\Phi_t(0)\|_{\dot{H}^{s-1}( \R^5)}+\|\Phi_{tt}(0)\|_{\dot{H}^{s-2}(\R^5)}\\&+\|\Box\Phi_t\|_{L^1\dot{H}^{s-2}}.
\endaligned
\end{equation}
The Appendix ensures that
\begin{equation*}
\|\Phi_t(0)\|_{\dot{H}^{s-1}( \R^5)}+\|\Phi_{tt}(0)\|_{\dot{H}^{s-2}(\R^5)}\lesssim 1
\end{equation*}
and, hence, in order to deduce \eqref{phit-phitt-hs}, it is enough to show that
\begin{equation}
\|\Box\Phi_t\|_{L^1\dot{H}^{s-2}}\lesssim 1.
\label{box-phit-hs-1}
\end{equation}
Using \eqref{Box-Phit} and the fractional Leibniz bound \eqref{Lbnz-0}, we derive that
\begin{equation}
\aligned
\|\Box\Phi_t\|&_{L^1\dot{H}^{s-2}}\\
\lesssim\,&\|D^{s-2}(\tA(r,v))\|_{L^\infty L^{4/(s-2)}}\left(1+\|\tA^{-1}(r,v)\|^2_{L^\infty_{t,x}}\right)\|\Phi_t\|_{L^\infty L^{4/(4-s)}}\\
&+\|\tA(r,v)-1\|_{L^\infty_{t,x}}\|D^{s-2}(\tA^{-2}(r,v))\|_{L^\infty L^{4/(s-2)}}\|\Phi_t\|_{L^\infty L^{4/(4-s)}}\\
&+\|\tA(r,v)-1\|_{L^\infty_{t,x}}\left(1+\|\tA^{-1}(r,v)\|^2_{L^\infty_{t,x}}\right)\|D^{s-2}\Phi_t\|_{L^\infty L^2}.
\endaligned\label{box-phit-hs-2}
\end{equation}

First, we deal with the norms involving $\Phi_t$, for which an application of the Sobolev embeddings \eqref{Sob-gen} produces
\begin{equation*}
\|\Phi_t\|_{L^\infty L^{4/(4-s)}}\lesssim \|\Phi_t\|_{L^\infty H^{5(s-2)/4}}\lesssim \|\Phi_t\|_{L^\infty H^{2}},
\end{equation*}
since $s<18/5$. This also implies
\begin{equation*}
\|D^{s-2}\Phi_t\|_{L^\infty L^2}\sim \|\Phi_t\|_{L^\infty \dot{H}^{s-2}}\lesssim \|\Phi_t\|_{L^\infty H^{8/5}}.
\end{equation*} 
Therefore, due to \eqref{phitlil2} and \eqref{Phit-lih2}, we obtain
\begin{equation*}
\|\Phi_t\|_{L^\infty L^{4/(4-s)}}+\|D^{s-2}\Phi_t\|_{L^\infty L^2}\lesssim 1.
\end{equation*}

Next, we work on the norms depending on $\tA$ and $\tA^{-1}$. On account of \eqref{Lbnz-0}, \eqref{ta-inv-ta}, and \eqref{ta-inv-li}, we infer that
\begin{equation*}
\aligned
\|D^{s-2}(\tA^{-2}(r,v))\|_{L^\infty L^{4/(s-2)}}&\lesssim \|D^{s-2}(\tA^{-1}(r,v))\|_{L^\infty L^{4/(s-2)}}\|\tA^{-1}(r,v)\|_{L^\infty_{t,x}}\\&\lesssim \|D^{s-2}(\tA(r,v))\|_{L^\infty L^{4/(s-2)}}.
\endaligned
\end{equation*}
Using the interpolation inequality \eqref{interpol-bd} and \eqref{nabla-delta-A}, we deduce
\begin{equation*}
\|D^{s-2}(\tA(r,v))\|_{L^\infty L^{4/(s-2)}}\lesssim \|\nabla_x(\tA(r,v))\|^{4-s}_{L^\infty L^4}\|\Delta(\tA(r,v))\|^{s-3}_{L^\infty L^2}\lesssim 1.
\end{equation*}
Finally, by invoking \eqref{decay-A-3} and \eqref{ta-inv-li}, we also control the $L^\infty_{t,x}$ norms in \eqref{box-phit-hs-2} and, thus, the argument for \eqref{box-phit-hs-1} is concluded.
\end{proof}

Following this, we can finish the proof of \eqref{phi-phit-hs} and, consequently, the proof of our main result by deriving the expected Sobolev regularity for $\Phi$. 

\begin{prop}
Under the assumptions of Proposition \ref{prop-phit}, we have
\begin{equation}
\|\Phi\|_{L^\infty \dot{H}^{s}}\lesssim 1.
\label{phi-hs+1}
\end{equation}
\end{prop}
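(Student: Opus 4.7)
The plan is to exploit the elliptic identity $\|\Phi\|_{L^\infty \dot H^s}=\|\Delta\Phi\|_{L^\infty\dot H^{s-2}}$ combined with the wave equation \eqref{Box-Phi}, which yields $\Delta\Phi=\Phi_{tt}-\Box\Phi$. Since Proposition \ref{prop-phit} already supplies $\|\Phi_{tt}\|_{L^\infty\dot H^{s-2}}\lesssim 1$, the entire task collapses to proving
$$\|\Box\Phi\|_{L^\infty\dot H^{s-2}}\lesssim 1.$$

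Next, I would split $\Box\Phi$ according to \eqref{Box-Phi} into three contributions: the linear piece $-\tfrac{3}{2}\Phi$, the nonlinear integral
$$J(x) := \tfrac12\int_0^{v(x)}\!\!\bigl(3\tA^{3/2}+\tA^{-1/2}-\tA^{-3/2}\bigr)(|x|,y)\,dy,$$
and the smooth stationary tail $\varphi_{\geq 1/2}/r^3$. Since $1<s-2<3$, interpolation \eqref{interpol-bd} between \eqref{Phi-h1} and \eqref{Phi-lih3} gives $\|\Phi\|_{L^\infty\dot H^{s-2}}\lesssim 1$. The tail is a fixed smooth function with $1/r^3$ decay on $\R^5$, so its Sobolev norm is trivially finite. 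The real work concentrates on $J$.

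For $J$, writing $\|J\|_{\dot H^{s-2}}\sim \|\nabla J\|_{\dot H^{s-3}}$ with $s-3\in(1/2,3/5)$, I differentiate under the integral to obtain
$$\nabla_x J=\tfrac12\,F(r,v)\,\nabla_x v+\tfrac{\hat x}{2}\int_0^v\partial_rF(r,y)\,dy,\qquad F:=3\tA^{3/2}+\tA^{-1/2}-\tA^{-3/2}.$$
For the first summand I would apply the fractional Leibniz inequality \eqref{Lbnz-0}: one factor is controlled by $\|F(r,v)\|_{L^\infty}\lesssim 1$ (from \eqref{decay-A-3} and \eqref{ta-inv-li}) paired with $\|\nabla v\|_{\dot H^{s-3}}$, the latter bounded by interpolation between \eqref{v-h1} and \eqref{nabla-delta-v}; the other factor reduces, via the representation $F=G(\tA)$ for a smooth $G$ (so that $G(\tA)-G(1)$ is a smooth function of $\tA-1$ vanishing at $0$), to the Moser estimate \eqref{Moser} paired with \eqref{nabla-delta-A} plus interpolation, bearing in mind that $\tA-1$ only decays like $1/r^2$ at infinity so $L^p$-based Sobolev norms with $p>5/2$ must be used. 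For the second summand, the pointwise bounds \eqref{ta-r-g1}, \eqref{ta-r-l1} together with \eqref{decay-v-3} produce an integrand that is $O(1/r^4)$ for $r\geq 1$ and $O(r)$ for $r<1$, from which the $\dot H^{s-3}$ bound follows by another fractional Leibniz computation that invokes \eqref{nabla-delta-A} and \eqref{nabla-delta-v}.

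The main obstacle is the balancing of exponents in the fractional Leibniz inequality applied to the composition $F(r,v)$, which mixes spatial-variable dependence through $\tA$ with the unknown $v$; everything must be done within the tight regularity budget $s-2<8/5$ and under the handicap that $\tA-1\notin L^2(\R^5)$, forcing the argument through $L^p$ Sobolev spaces with $p>5/2$ rather than through the $H^\sigma$ scale. Once the exponents close, the estimate is essentially a reprise of the technology already marshalled in Proposition \ref{prop-phit}.
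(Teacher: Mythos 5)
Your opening reduction matches the paper exactly: write $\Delta\Phi=\Phi_{tt}-\Box\Phi$, invoke $\|\Phi_{tt}\|_{L^\infty\dot H^{s-2}}\lesssim 1$ from Proposition \ref{prop-phit}, and reduce to $\|\Box\Phi\|_{L^\infty\dot H^{s-2}}\lesssim 1$. After this point you diverge, and the divergence is where the trouble lies. The paper's key observation, which you miss, is that since $3/2<s-2<8/5<2$, interpolation between $L^2$ and $\dot H^2$ gives $\|\Box\Phi\|_{\dot H^{s-2}}\lesssim\|\Box\Phi\|_{H^2}$, so it suffices to bound $\|\Box\Phi\|_{L^\infty L^2}$ (immediate from the asymptotics \eqref{rll1}, \eqref{rgrt1}, \eqref{decay-v-3}) and $\|\Delta\Box\Phi\|_{L^\infty L^2}$. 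The latter is handled by computing $\Delta\Box\Phi$ explicitly from \eqref{Box-Phi} and estimating each resulting term in $L^2$ by the integer-order bounds \eqref{nabla-delta-v}, \eqref{nabla-delta-A} and the pointwise decay of $v$, $\tA-1$, $\tA_r$, $\Delta\tA$. No fractional Leibniz, no Moser, no product-rule bookkeeping in fractional Sobolev spaces is required anywhere in the proof of this proposition.

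Your plan, by contrast, attacks $\dot H^{s-2}$ of the integral term $J$ directly via $\|\nabla J\|_{\dot H^{s-3}}$, fractional Leibniz, and a Moser-type estimate for the composition $F(r,v)$. This is strictly harder and you do not carry it through: you explicitly leave the exponent bookkeeping as a claim (\textquotedblleft once the exponents close\textquotedblright). That is a genuine gap, not merely an omission of routine detail, because the feasibility of the $L^{p_1}$/$L^{q_1}$ pairing in \eqref{Lbnz-0} for the term $\|D^{s-3}F(r,v)\|_{L^{p_1}}\|\nabla v\|_{L^{q_1}}$ is precisely the point that needs to be settled, and a Moser inequality in $L^p$-based Sobolev spaces with $p\neq 2$ is not among the tools \eqref{Moser} that the paper has assembled. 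Compounding this, your premise that \textquotedblleft $\tA-1$ only decays like $1/r^2$ at infinity so $L^p$-based Sobolev norms with $p>5/2$ must be used\textquotedblright\ is incorrect at this stage of the argument: by \eqref{decay-A-3} (proved in Section 6 and available here) one has $\tA(r,v)-1\lesssim(1+r^4)^{-1}$, which lies in $L^p(\R^5)$ for every $p>5/4$, in particular in $L^2$. The same applies to $\tA(r,y)-1$ for $|y|\leq|v|$, since $|ry|\lesssim r|v|\lesssim 1/r$ forces $\sin^2(ry+\varphi)\lesssim 1/r^2$ when $r$ is large. So the detour through $p>5/2$ that motivates much of the complexity in your sketch is unnecessary; using the improved decay you should work in the $L^2$-based scale, and more to the point, you should instead adopt the paper's interpolation-down step, which eliminates the fractional calculus entirely.
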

\begin{proof}
We start the argument by using \eqref{phit-phitt-hs} and $7/2<s<18/5$ to argue that
\begin{equation}
\aligned
\|\Phi\|_{L^\infty \dot{H}^{s}}\sim \|\Delta\Phi\|_{L^\infty \dot{H}^{s-2}}&\leq \|\Phi_{tt}\|_{L^\infty \dot{H}^{s-2}}+\|\Box\Phi\|_{L^\infty \dot{H}^{s-2}}\\
&\lesssim 1+\|\Box\Phi\|_{L^\infty H^{2}}.
\endaligned
\label{phi-hs+1-v2}
\end{equation}
On one hand, a joint application of \eqref{rll1}, \eqref{rgrt1}, and \eqref{decay-v-3} yields
\begin{equation}
\aligned
\|\Box&\Phi\|_{L^\infty L^{2}}\\&\lesssim \left\||v|^3+v^4\right\|_{L^\infty L^2([0,T)\times \{r<1\})}+\left\|\frac{|\varphi_{\geq 1/2}|}{r^3}+\frac{|v|}{r^2}\right\|_{L^\infty L^2([0,T)\times \{r\geq1\})}\\
&\lesssim \left\|1\right\|_{L^\infty L^2([0,T)\times \{r<1\})}+\left\|\frac{1}{r^3}\right\|_{L^\infty L^2([0,T)\times \{r\geq1\})}\\
&\lesssim 1.
\endaligned
\label{box-phi-l2}
\end{equation}

On the other hand, we observe that
\begin{equation}
\|\Box\Phi\|_{L^\infty \dot{H}^{2}}\sim\|\Delta\Box\Phi\|_{L^\infty L^{2}}
\label{box-phi-h2}
\end{equation}
and, subsequently, a tedious but direct computation based on \eqref{Box-Phi} leads to
\begin{equation*}
\aligned
\Delta\Box\Phi=\,&\frac{1}{4}\int_{0}^{v}\left\{9\tA^{1/2}-3\tA^{-1/2}-\tA^{-3/2}+3\tA^{-5/2}\right\}\Delta \tA\ dy\\
&+\frac{1}{8}\int_{0}^{v}\left\{9\tA^{-1/2}+3\tA^{-3/2}+3\tA^{-5/2}-15\tA^{-7/2}\right\}\tA_r^2\ dy\\
&+\frac{1}{2}\left\{9\tA^{1/2}(r,v)-3\tA^{-1/2}(r,v)-\tA^{-3/2}(r,v)+3\tA^{-5/2}(r,v)\right\}\tA_r(r,v)v_r\\
&+\frac{1}{4}\left\{9\tA^{1/2}(r,v)-3\tA^{-1/2}(r,v)-\tA^{-3/2}(r,v)+3\tA^{-5/2}(r,v)\right\}\tA_y(r,v)v^2_r\\
&+\tA^{1/2}(r,v)(\tA(r,v)-1)\left(\frac{3}{2}+\frac{\tA^{-2}(r,v)}{2}\right)\Delta v\\
&+\frac{\varphi_{\geq 1/2}}{r^3}.
\endaligned
\end{equation*}
From previous calculations, we already know that the last term on the right-hand side has a finite $L^\infty L^2$ norm, while \eqref{decay-A-3}, \eqref{nabla-delta-v}, \eqref{decay-v-3}, \eqref{nabla-delta-A}, and
\begin{equation*}
\left|\tA_y(r,v)\right|=2\frac{|\sin (2u)|}{r}\lesssim \tA^{1/2}(r,v)
\end{equation*}
together imply
\begin{equation*}
\aligned
&\left\|\frac{1}{2}\left\{9\tA^{1/2}(r,v)-3\tA^{-1/2}(r,v)-\tA^{-3/2}(r,v)+3\tA^{-5/2}(r,v)\right\}\tA_r(r,v)v_r\right\|_{L^\infty L^2}\\
&\qquad\qquad\qquad\lesssim \|\tA_r(r,v)\|_{L^\infty L^4}\|v_r\|_{L^\infty L^4}\\
&\qquad\qquad\qquad\lesssim \left(\|\partial_r(\tA_r(r,v))\|_{L^\infty L^4}+(1+\|v\|_{L^\infty_{t,x}})\|v_r\|_{L^\infty L^4}\right)\|v_r\|_{L^\infty L^4}\\
&\qquad\qquad\qquad\lesssim 1,
\endaligned
\end{equation*}
\begin{equation*}
\aligned
&\left\|\frac{1}{4}\left\{9\tA^{1/2}(r,v)-3\tA^{-1/2}(r,v)-\tA^{-3/2}(r,v)+3\tA^{-5/2}(r,v)\right\}\tA_y(r,v)v^2_r\right\|_{L^\infty L^2}\\
&\qquad\qquad\qquad\qquad\qquad\lesssim\|v_r\|^2_{L^\infty L^4}\\
&\qquad\qquad\qquad\qquad\qquad\lesssim 1,
\endaligned
\end{equation*}
and
\begin{equation*}
\left\|\tA^{1/2}(r,v)(\tA(r,v)-1)\left(\frac{3}{2}+\frac{\tA^{-2}(r,v)}{2}\right)\Delta v\right\|_{L^\infty L^2}\lesssim \|\Delta v\|_{L^\infty L^2}\lesssim 1.
\end{equation*}

Therefore, we are left to analyze the two integral terms in the expression for $\Delta\Box\Phi$. For the second one, we can use the obvious bound $\tA\geq 1$ and \eqref{ta-r}-\eqref{ta-r-l1} to infer that
\begin{equation*}
\aligned
&\left|\frac{1}{8}\int_{0}^{v}\left\{9\tA^{-1/2}+3\tA^{-3/2}+3\tA^{-5/2}-15\tA^{-7/2}\right\}\tA_r^2\ dy\right|\\
&\qquad\qquad\qquad\qquad\lesssim \int_{0}^{|v|}\tA_r^2\ dy\\
&\qquad\qquad\qquad\qquad\lesssim \begin{cases}
\frac{|v|+|v|^3}{r^4}, \, &r\geq 1,\\
r^2|v|^9, \, &r< 1.
\end{cases}
\endaligned
\end{equation*}
If we now factor in \eqref{decay-v-3}, then we effortlessly obtain
\begin{equation*}
\left\|\frac{1}{8}\int_{0}^{v}\left\{9\tA^{-1/2}+3\tA^{-3/2}+3\tA^{-5/2}-15\tA^{-7/2}\right\}\tA_r^2\ dy\right\|_{L^\infty L^2}\lesssim 1.
\end{equation*}
For the first integral term, a straightforward calculation using \eqref{ta-r} reveals that
\begin{equation*}
\Delta\tA_r=\frac{-4\sin^2(ry+\varphi)}{r^4}+\frac{2\sin2(ry+\varphi)\cdot \varphi_{rr}+ 4\cos2(ry+\varphi)\cdot(y+\varphi_r)^2}{r^2}.
\end{equation*}
An investigation similar to the one producing \eqref{ta-r-g1} and \eqref{ta-r-l1} yields in this case
\begin{equation*}
\left|\Delta\tA_r\right|\lesssim \begin{cases}
\frac{1+y^2}{r^2}, \, &r\geq 1,\\
y^4, \, &r< 1.
\end{cases}
\end{equation*}
Therefore, we deduce
\begin{equation*}
\aligned
&\left|\frac{1}{4}\int_{0}^{v}\left\{9\tA^{1/2}-3\tA^{-1/2}-\tA^{-3/2}+3\tA^{-5/2}\right\}\Delta \tA\ dy\right|\\
&\qquad\qquad\qquad\qquad\lesssim \int_{0}^{|v|}\tA^{1/2}\left|\Delta \tA\right|\ dy\\
&\qquad\qquad\qquad\qquad\lesssim \begin{cases}
\frac{|v|+|v|^3}{r^2}, \, &r\geq 1,\\
|v|^5+v^6, \, &r< 1.
\end{cases}
\endaligned
\end{equation*}
If we rely yet again on \eqref{decay-v-3}, we derive 
\begin{equation*}
\left\|\frac{1}{4}\int_{0}^{v}\left\{9\tA^{1/2}-3\tA^{-1/2}-\tA^{-3/2}+3\tA^{-5/2}\right\}\Delta \tA\ dy\right\|_{L^\infty L^2}\lesssim 1, 
\end{equation*}
which, jointly with the estimates for the other terms in the formula of $\Delta\Box\Phi$, implies
\begin{equation*}
\|\Delta\Box\Phi\|_{L^\infty L^{2}}\lesssim 1.
\end{equation*}
Together with \eqref{phi-hs+1-v2}, \eqref{box-phi-l2}, and \eqref{box-phi-h2}, this bound shows that \eqref{phi-hs+1} is valid and the argument is finished.
\end{proof}


\section*{Appendix}

As mentioned in the introduction, this appendix is devoted to showing that the Sobolev regularity of the initial data appearing as part of the hypothesis in Theorem \ref{main-th-v-2} (equivalent to the one for data in  Theorem \ref{main-th}) implies the claims made throughout the main argument about various norms evaluated at $t=0$.

The first such quantity is the energy \eqref{tote-0}, for which a direct analysis based on Sobolev embeddings, radial Sobolev estimates, and Hardy-type inequalities proves that it is finite if 
\[
(u_0,u_1)\in \left(\dot{H}^{7/4+\epsilon}\cap\dot{H}^1\right)(\R^3)\times L^2(\R^3),
\]
with $\epsilon>0$ being arbitrarily small. Next, in section \ref{sect-en}, a careful reading shows that the arguments there are valid as long as $v(0)\in H^2(\R^5)$. This was used, for example, in \eqref{phir-l2} and it holds true since $v(0)\in H^s(\R^5)$ with $s>7/2$.

Following this, we have three more claims to argue for:
\begin{equation}
\|\Phi_t(0)\|_{\dot{H}^{1}( \R^5)}+\|\Phi_{tt}(0)\|_{L^2(\R^5)}\lesssim 1,
\label{phit0-h1}
\end{equation}
appearing in the proof of Proposition \ref{prop-Phit-h1},
\begin{equation}
 \|\Phi_{tt}(0)\|_{\dot{H}^{1}( \R^5)}+\|\Phi_{ttt}(0)\|_{L^2(\R^5)}\lesssim 1,
\label{phitt0-h1}
\end{equation}
intrinsically featured in the argument for Proposition \ref{prop-Phitt-h1}, and
\begin{equation}
 \|\Phi_t(0)\|_{\dot{H}^{s-1}( \R^5)}+\|\Phi_{tt}(0)\|_{\dot{H}^{s-2}(\R^5)}\lesssim 1,
\label{phit0-hs}
\end{equation}
which is part of Proposition \ref{prop-phit}.

To start with, we recall \eqref{Phit-final} and \eqref{Phitt-final}, i.e., 
\begin{equation}
\Phi_{t}=\tA^{1/2}(r,v)\,v_{t},\qquad
\Phi_{tt}=\tA^{1/2}(r,v)\,v_{tt}+ \tA^{-1/2}(r,v)\,\frac{\sin(2u)}{r}\,v^2_{t},
\label{phit-phitt}
\end{equation}
and we compute, on the basis of the latter,
\begin{equation}
\aligned
\Phi_{ttt}=\ &\tA^{1/2}(r,v)\,v_{ttt}+ 3\tA^{-1/2}(r,v)\,\frac{\sin(2u)}{r}\,v_{tt}\,v_{t}\\
&+\tA^{-3/2}(r,v)\left(2\cos(2u)-4\,\frac{\sin^4(u)}{r^2}\right)v^3_{t}.
\endaligned
\label{phittt}
\end{equation}
Additionally, we notice that the hypothesis of Theorem \ref{main-th-v-2} on the initial data, i.e.,
\begin{equation}
v(0)\in H^s(\R^5), \qquad v_t(0)\in H^{s-1}(\R^5), \qquad s>\frac{7}{2},
\label{v0-hs}
\end{equation}
together with the classical Sobolev embedding \eqref{Sob-classic}, yields
\begin{equation}
v(0)\in H^{1,\infty}(\R^5), \qquad v_t(0)\in L^{\infty}(\R^5),
\label{dv0-li}
\end{equation}
a fact that will be used extensively in what follows. With these prerequisites out of the way, we can first prove \eqref{phit0-h1}. 

\begin{customthm}{A.1}
Under the assumptions of Theorem \ref{main-th-v-2}, the estimate \eqref{phit0-h1} is valid.
\end{customthm}
\begin{proof}
Given that 
\[
\|\Phi_t(0)\|_{\dot{H}^{1}( \R^5)}\sim \|\partial_r\Phi_t(0)\|_{L^{2}( \R^5)}, 
\]
we rely on \eqref{phit-phitt} to initially calculate 
\begin{equation*}
\partial_r\Phi_t= \tA^{1/2}(r,v)\,\partial_rv_{t}+\frac{1}{2}\tA^{-1/2}(r,v)\left(-\frac{4\sin^2(u)}{r^3}+\frac{2\sin(2u)}{r^2}u_r\right)v_{t},
\end{equation*}
which is analyzed separately in the $r\leq 1$ and $r>1$ regimes. In the former, we easily have
\begin{equation}
1\leq \tA(r,v)\lesssim 1+v^2,
\label{ta-l1}
\end{equation}
while a Maclaurin analysis yields
\begin{equation}
\left|-\frac{4\sin^2(u)}{r^3}+\frac{2\sin(2u)}{r^2}u_r\right|\lesssim |v_r||v| +rv^4.
\label{sin2u-r3-l1}
\end{equation}
When $r>1$, it follows directly that
\begin{equation}
\tA(r,v)\sim 1
\label{ta-g1}
\end{equation}
and 
\begin{equation}
\left|-\frac{4\sin^2(u)}{r^3}+\frac{2\sin(2u)}{r^2}u_r\right|\lesssim \frac{1}{r^3}+\frac{|u_r|}{r^2}\lesssim \frac{1}{r^3}+\frac{|v|+r|v_r|+|\varphi_r|}{r^2}.
\label{sin2u-r3-g1}
\end{equation}
Based on these findings, \eqref{v0-hs}, and \eqref{dv0-li}, we infer that
\begin{equation*}
\aligned
\|\partial_r\Phi_t(0)\|_{L^{2}( \R^5)}&\lesssim \left(1+\|v(0)\|_{L^{\infty}( \R^5)}\right)\|\partial_rv_t(0)\|_{L^{2}( \R^5)}\\
&\quad+\big(\|v_r(0)\|_{L^{\infty}( \R^5)}\|v(0)\|_{L^{\infty}( \R^5)}+\|v(0)\|^4_{L^{\infty}( \R^5)}\\
&\qquad\ +1+\|v_r(0)\|_{L^{\infty}( \R^5)}\big)\|v_t(0)\|_{L^{2}( \R^5)}\\
&\lesssim 1.
\endaligned
\end{equation*}

Thus, we are left to show that
\begin{equation}
\|\Phi_{tt}(0)\|_{L^2(\R^5)}\lesssim 1,
\label{Phitt-0}
\end{equation}
and, for this purpose, we first notice that
\begin{equation}
\frac{|\sin(2u)|}{r}\lesssim\begin{cases}
|v|, \quad r\leq 1,\\
\frac{1}{r}, \quad \ \, r >1.
\end{cases}
\label{sin2u}
\end{equation}
Due to this estimate and the ones used in the argument for $\Phi_t(0)$, we deduce
\begin{equation}
\aligned
\|\Phi_{tt}(0)\|_{L^2(\R^5)}&\lesssim  \left(1+\|v(0)\|_{L^{\infty}( \R^5)}\right)\|v_{tt}(0)\|_{L^{2}( \R^5)}\\
&\quad+ \left(1+\|v(0)\|_{L^{\infty}( \R^5)}\right)\|v_{t}(0)\|_{L^{\infty}( \R^5)}\|v_{t}(0)\|_{L^{2}( \R^5)}\\
&\lesssim \|v_{tt}(0)\|_{L^{2}( \R^5)}+1\\
&\leq \|\Box v(0)\|_{L^{2}( \R^5)}+\|\Delta v(0)\|_{L^{2}( \R^5)}+1\\
&\lesssim \|\Box v(0)\|_{L^{2}( \R^5)}+1.
\endaligned
\label{ptt-Dv}
\end{equation}

It may seem right now that we could just invoke \eqref{vtt-dv} and, consequently,  \eqref{Phitt-0} would follow. However, this is not the case as \eqref{phit0-h1} is required in the main argument at a point that precedes and most likely influences the proof of \eqref{vtt-dv}, making this approach circular. However, parts of the asymptotics used in proving \eqref{vtt-dv} can still be employed here, as they were argued for with facts found prior to \eqref{phit0-h1}. A term-by-term analysis of the right-hand side of \eqref{main-v} evaluated at $t=0$ yields:
\begin{equation*}
\left\| \frac{1}{r}\Delta_3 \varphi\right\|_{L^2(\R^5)}+\left\|\frac{2}{r^2}\varphi_{>1} v(0)\right\|_{L^2(\R^5)} \lesssim \left\|\frac{1}{r}\right\|_{L^2( \{1\leq r\leq 2\})}+\|v(0)\|_{L^2(\R^5)}\lesssim 1,
\end{equation*}
\begin{equation*}
\aligned
&\left\|\varphi_{<1}\left(\frac{1}{r}N(r, rv(0), \nabla(rv)(0))+\frac{2}{r^2}v(0)\right)\right\|_{L^2(\R^5)}\\
&\qquad\lesssim\left\| |\varphi_{<1}|\left(|v(0)|^3+|v(0)|^5+|v(0)||\nabla v(0)|^2+rv^4(0)|v_r(0)|\right)\right\|_{L^2(\R^5)}\\
&\qquad\lesssim \bigg(\|v(0)\|^2_{L^{\infty}( \R^5)}+\|v(0)\|^4_{L^{\infty}( \R^5)}+\|\nabla v(0)\|^2_{L^{\infty}( \R^5)}\\
&\qquad\qquad+\|v(0)\|^3_{L^{\infty}( \R^5)}\|\nabla v(0)\|_{L^{\infty}( \R^5)}\bigg)\|v(0)\|_{L^{2}( \R^5)}\\
&\qquad\lesssim 1,
\endaligned
\end{equation*}
and
\begin{equation*}
\aligned
&\left\|\frac{1}{r}\varphi_{>1}N\left(r, rv+\varphi, \nabla(rv+\varphi)\right)\big|_{t=0}\right\|_{L^2(\R^5)}\\
&\quad\lesssim \left\||\varphi_{>1}|\left(\frac{1}{r^3}+\frac{|\nabla v(0)|^2}{r}+\frac{|v_r(0)|}{r^3}+\frac{|v(0)|^2}{r^3}+\frac{|v(0)|}{r^4}\right)\right\|_{L^2(\R^5)}\\&\quad\lesssim 1+\left(\|\nabla v(0)\|_{L^{\infty}( \R^5)}+1\right)\|\nabla v(0)\|_{L^{2}( \R^5)}+\left(\|v(0)\|_{L^{\infty}( \R^5)}+1\right)\|v(0)\|_{L^{2}( \R^5)}\\
&\quad\lesssim 1.
\endaligned
\end{equation*}
Collectively, these three estimates and \eqref{main-v} show that
\begin{equation}
\|\Box v(0)\|_{L^{2}( \R^5)}\lesssim 1,
\label{boxv-0} 
\end{equation}
which, on the basis of \eqref{ptt-Dv}, implies that \eqref{Phitt-0} holds true. This finishes the proof of this proposition.
\end{proof}

Next, we can build off of this result and prove that \eqref{phitt0-h1} is valid.

\begin{customthm}{A.2}
Under the assumptions of Theorem \ref{main-th-v-2}, the estimate \eqref{phitt0-h1} holds true.
\end{customthm}
\begin{proof}
We start by reducing the argument for \eqref{phitt0-h1} to showing that
\begin{equation}
\|\nabla\Box v(0)\|_{L^{2}( \R^5)}\lesssim 1.
\label{nabla-boxv-0}
\end{equation}
In the case of the $L^2$ norm for $\Phi_{ttt}(0)$, due to \eqref{phittt}, we can directly estimate as follows:
\begin{equation*}
\aligned
\|\Phi_{ttt}(0)\|&_{L^{2}( \R^5)}\\
&\lesssim \|\tA^{1/2}(r,v(0))\|_{L^{\infty}( \R^5)}\|v_{ttt}(0)\|_{L^{2}( \R^5)}\\
&\quad + \left\|\frac{\sin(2u(0))}{r}\right\|_{L^{\infty}( \R^5)}\|v(0)\|_{L^{\infty}( \R^5)}\|v_{tt}(0)\|_{L^{2}( \R^5)}\\
&\quad +\left\|2\cos(2u(0))-4\,\frac{\sin^4(u(0))}{r^2}\right\|_{L^{\infty}( \R^5)}\|v_{t}(0)\|^2_{L^{\infty}( \R^5)}\|v_{t}(0)\|_{L^{2}( \R^5)}.
\endaligned
\end{equation*}
From the previous proposition (i.e., \eqref{ta-l1}, \eqref{ta-g1}, \eqref{sin2u-r3-l1}, \eqref{sin2u-r3-g1}) we already know that
\begin{equation}
\|\tA^{1/2}(r,v(0))\|_{L^{\infty}( \R^5)}+\left\|\frac{\sin(2u(0))}{r}\right\|_{L^{\infty}( \R^5)}\lesssim 1+\|v(0)\|_{L^{\infty}( \R^5)}
\label{ta-sin2u}
\end{equation}
and 
\begin{equation}
\|v_{tt}(0)\|_{L^{2}( \R^5)}\lesssim 1.
\label{vtt-0}
\end{equation}
Moreover, it is easy to see that
\begin{equation*}
\left|2\cos(2u)-4\,\frac{\sin^4(u)}{r^2}\right|\lesssim\begin{cases}
1+r^2v^4, \quad r\leq 1,\\
1, \quad \qquad \quad\, r >1,
\end{cases}
\end{equation*}
and, consequently, 
\begin{equation*}
\left\|2\cos(2u(0))-4\,\frac{\sin^4(u(0))}{r^2}\right\|_{L^{\infty}( \R^5)}\lesssim 1+\|v(0)\|^4_{L^{\infty}( \R^5)}.
\end{equation*}
Therefore, we can infer from these bounds, \eqref{v0-hs}, and \eqref{dv0-li} that
\begin{equation}
\aligned
\|\Phi_{ttt}(0)\|_{L^{2}( \R^5)}&\lesssim \|v_{ttt}(0)\|_{L^{2}( \R^5)}+1\\
&\leq\|\Delta v_{tt}(0)\|_{L^{2}( \R^5)}+\|\partial_t \Box v(0)\|_{L^{2}( \R^5)}+1\\
&\lesssim \|\partial_t \Box v(0)\|_{L^{2}( \R^5)}+1.
\endaligned
\label{dt-boxv-0}
\end{equation}

For the $\dot{H}^1$ norm of $\Phi_{tt}(0)$, as
\[
\|\Phi_{tt}(0)\|_{\dot{H}^{1}( \R^5)}\sim \|\partial_r\Phi_{tt}(0)\|_{L^{2}( \R^5)}, 
\]
we use \eqref{phit-phitt} to compute
\begin{equation*}
\aligned
\partial_r\Phi_{tt}&=\tA^{1/2}(r,v)\,\partial_rv_{tt}\\
&+\frac{1}{2}\left(-\frac{4\sin^2(u)}{r^3}+\frac{2\sin(2u)}{r^2}u_r\right)\left(\tA^{-1/2}(r,v)v_{tt}-\tA^{-3/2}(r,v)\frac{\sin(2u)}{r}v_t^2\right)\\
&+\tA^{-1/2}(r,v)\left(-\frac{\sin(2u)}{r^2}+\frac{2\cos(2u)}{r}u_r\right)v^2_{t}\\
&+2\tA^{-1/2}(r,v)\,\frac{\sin(2u)}{r}\,\partial_r v_t\,v_{t}.
\endaligned
\end{equation*}
In addition to the terms controlled by \eqref{ta-sin2u}, the proof of the previous proposition implies through \eqref{sin2u-r3-l1} and \eqref{sin2u-r3-g1} the estimate 
\begin{equation*}
\aligned
&\left\|-\frac{4\sin^2(u(0))}{r^3}+\frac{2\sin(2u(0))}{r^2}u_r(0)\right\|_{L^\infty(\R^5)}\\
&\qquad\qquad \lesssim\|v_r(0)\|_{L^{\infty}( \R^5)}\|v(0)\|_{L^{\infty}( \R^5)}+\|v(0)\|^4_{L^{\infty}( \R^5)}+1+\|v_r(0)\|_{L^{\infty}( \R^5)}.
\endaligned
\end{equation*}
A similar analysis yields in a straightforward way 
\begin{equation*}
\left|-\frac{\sin(2u)}{r^2}+\frac{2\cos(2u)}{r}u_r\right|\lesssim\begin{cases}
|v_r|+r|v|^3, \qquad\qquad\quad\, r\leq 1,\\
\frac{1}{r^2}+\frac{|v|+r|v_r|+|\varphi_r|}{r}, \quad  \quad\, r >1,
\end{cases}
\end{equation*}
and, subsequently, 
\begin{equation*}
\left\|-\frac{\sin(2u(0))}{r^2}+\frac{2\cos(2u(0))}{r}u_r(0)\right\|_{L^{\infty}( \R^5)}\lesssim \|v_r(0)\|_{L^{\infty}( \R^5)}+\|v(0)\|^3_{L^{\infty}( \R^5)}+1.
\end{equation*}
Hence, collecting all these facts and relying again on \eqref{v0-hs}, \eqref{dv0-li}, and \eqref{vtt-0}, we deduce that
\begin{equation*}
\aligned
&\|\partial_r\Phi_{tt}(0)\|_{L^{2}( \R^5)}\\
&\qquad\lesssim (1+\|v(0)\|_{L^{\infty}( \R^5)})\|\partial_rv_{tt}(0)\|_{L^2(\R^5)}\\
&\quad\qquad+\left(\|v_r(0)\|_{L^{\infty}( \R^5)}\|v(0)\|_{L^{\infty}( \R^5)}+\|v(0)\|^4_{L^{\infty}( \R^5)}+1+\|v_r(0)\|_{L^{\infty}( \R^5)}\right)\\
&\quad\qquad\quad \cdot\left\{\|v_{tt}(0)\|_{L^2(\R^5)}+(1+\|v(0)\|_{L^{\infty}( \R^5)})\|v_t(0)\|_{L^{\infty}( \R^5)}\|v_t(0)\|_{L^2( \R^5)}\right\}\\
&\quad\qquad+\left(\|v_r(0)\|_{L^{\infty}( \R^5)}+\|v(0)\|^3_{L^{\infty}( \R^5)}+1\right)\|v_t(0)\|_{L^{\infty}( \R^5)}\|v_t(0)\|_{L^2( \R^5)}\\
&\quad\qquad+(1+\|v(0)\|_{L^{\infty}( \R^5)})\|v_t(0)\|_{L^{\infty}( \R^5)}\|\partial_rv_{t}(0)\|_{L^2(\R^5)}\\
&\qquad\lesssim \|\partial_rv_{tt}(0)\|_{L^2(\R^5)}+1\\
&\qquad\leq \|\partial_r\Delta v(0)\|_{L^2(\R^5)}+\|\partial_r\Box v(0)\|_{L^2(\R^5)}+1\\
&\qquad\lesssim \|\partial_r\Box v(0)\|_{L^2(\R^5)}+1.
\endaligned
\end{equation*}
Together with \eqref{dt-boxv-0}, this inequality finishes the argument claiming that, in order to prove \eqref{phitt0-h1}, it is sufficient to show \eqref{nabla-boxv-0}.

For proving \eqref{nabla-boxv-0}, we adopt a similar approach to the one leading to \eqref{boxv-0}, in the sense that we estimate the gradient $\nabla_{t,r}=(\partial_t, \partial_r)$ at $t=0$ for each of the terms on the right-hand side of \eqref{main-v}. In fact, one can see relatively easily that most of the corresponding terms in the two analyses share a generic core and, thus, we can just investigate the slight differences appearing in this new framework. First, it is straightforward to argue that
\begin{equation*}
\aligned
&\left\|\nabla_{t,r}\left( \frac{1}{r}\Delta_3 \varphi\right)\right\|_{L^2(\R^5)}+\left\|\nabla_{t,r}\left(\frac{2}{r^2}\varphi_{>1} v\right)(0)\right\|_{L^2(\R^5)}\\ 
&\qquad\qquad\qquad\lesssim 1+\|v(0)\|_{L^2(\R^5)}+\|\nabla_{t,r}v(0)\|_{L^2(\R^5)}\\
&\qquad\qquad\qquad\lesssim 1.
\endaligned
\end{equation*}

Secondly, when we deal with terms involving the cutoff $\varphi_{<1}$, we notice that differentiation of the expressions having the generic profile $\tilde{N}(rv)v^k$ is easy to manage. This is due to the control \eqref{dni-li} we have on $\tilde{N}$ and that the gradient of such an expression is made of terms like
\[
\tilde{N}(rv)v^{k-1}\nabla_{t,r}v, \qquad \tilde{N}'(rv)v^{k}r\nabla_{t,r}v, \qquad \tilde{N}'(rv)v^{k+1}.
\]
Therefore, by comparison to the analysis for \eqref{boxv-0}, we either replace $v(0)$ by $\nabla_{t,r}v(0)$ or we have an extra factor of $r\nabla_{t,r}v(0)$ or $v(0)$, For the former case, we estimate the gradient in the same $L^p$ space (i.e., $L^\infty(\R^5)$ or $L^2(\R^5)$) as we did $v(0)$, while for the latter one we can place both extra factors in  $L^\infty(\R^5)$ due to \eqref{dv0-li} and to the presence of $\varphi_{<1}$, which forces $r\leq 1$. Similar arguments can be done for the terms $N_3(rv)v(v_t^2-v_r^2)$ and $N_4(rv)rv^4v_r$, with slight modifications for when the gradient falls on the derivative terms. In this case, we need to estimate
\[
N_3(rv(0))v(0)(v_t(0)\nabla_{t,r}v_t(0)-v_r(0)\nabla_{t,r}v_r(0))
\]
and
\[
N_4(rv(0))rv^4(0)\nabla_{t,r}v_r(0),
\]
and we place all factors in $L^\infty(\R^5)$, with the exception of the second order derivatives, which are bounded in $L^2(\R^5)$. We control $v_{tt}(0)$ through \eqref{vtt-0} and 
\[
\|\partial_tv_r(0)\|_{L^2(\R^5)}=\|\partial_rv_t(0))\|_{L^2(\R^5)}\sim \|v_t(0))\|_{\dot{H}^1(\R^5)}\lesssim 1.
\]
For $v_{rr}(0)$, we argue using \eqref{rad-Sob-2} and \eqref{dv0-li} to infer that
\begin{equation*}
\aligned
\|v_{rr}(0))\|_{L^2(\R^5)}&\lesssim \|\Delta v(0))\|_{L^2(\R^5)}+\left\|\frac{v_{r}(0)}{r}\right\|_{L^2(\R^5)}\\ &\lesssim \|v(0))\|_{\dot{H}^2(\R^5)}+\left\|\frac{1}{(1+r^2)r}\right\|_{L^2(\R^5)}\\
&\lesssim 1.
\endaligned
\end{equation*}
This finishes the discussion of terms localized by $\varphi_{<1}$.

Finally, we address the gradient for the terms on the right-hand side of \eqref{main-v} involving $N(r,rv+\varphi, \nabla(rv+\varphi))$. We claim the analysis is almost equivalent to the one just above, with one exception. In this case, factors of $r$ introduced by differentiation are not friendly due to the localization induced by $\varphi_{>1}$. However, we claim that in the structure of $N(r,rv+\varphi, \nabla(rv+\varphi))$, there are sufficient negative powers of $r$ to offset this issue and we ask the careful reader to verify this.
\end{proof}

In conclusion of this appendix, we show that \eqref{phit0-hs} holds true.

\begin{customthm}{A.3}
Under the assumptions of Theorem \ref{main-th-v-2}, the estimate \eqref{phit0-hs} is valid.
\end{customthm}
\begin{proof}
We first focus on the finiteness of the $\dot{H}^{s-1}(\R^5)$ norm and, by applying the fractional Leibniz estimate \eqref{Lbnz-0} in the context of \eqref{phit-phitt}, we deduce
\begin{equation*}
\aligned
\|\Phi_t(0)\|_{\dot{H}^{s-1}( \R^5)}\lesssim\, &\|\tA^{1/2}(r,v(0))\|_{\dot{H}^{s-1}( \R^5)}\|v_t(0)\|_{L^\infty( \R^5)}\\ &+ \|\tA^{1/2}(r,v(0))\|_{L^\infty( \R^5)}\|v_t(0)\|_{\dot{H}^{s-1}( \R^5)}.
\endaligned
\end{equation*}
If we rely on \eqref{v0-hs}, \eqref{dv0-li}, and \eqref{ta-sin2u}, it follows that
\begin{equation}
\|\Phi_t(0)\|_{\dot{H}^{s-1}( \R^5)}\lesssim \|\tA^{1/2}(r,v(0))\|_{\dot{H}^{s-1}( \R^5)}+1.
\label{phiths-1}
\end{equation}
Next, we use the Moser inequality \eqref{Moser} for the $C^\infty$ function
\[
F:\R\to \R, \qquad F(x)=(1+2x^2)^{1/2}-1,
\]
to derive that
\begin{equation}
\aligned
 &\|\tA^{1/2}(r,v(0))\|_{\dot{H}^{s-1}( \R^5)}\\
&\qquad\qquad\lesssim  \|\tA^{1/2}(r,v(0))-1\|_{H^{s-1}( \R^5)}\\
&\qquad\qquad\lesssim \gamma\left(\left\|\frac{\sin(u(0))}{r}\right\|_{L^\infty(\R^5)}\right)\left\|\frac{\sin(u(0))}{r}\right\|_{H^{s-1}(\R^5)}\\
 &\qquad\qquad\lesssim \gamma\left(\left\|\frac{\sin(u(0))}{r}\right\|_{L^\infty(\R^5)}\right)\bigg(\left\|\frac{\sin(u(0))}{r}\right\|_{H^{s-1}(\{1\leq r\leq 2\})}\\
 &\qquad\qquad\qquad\qquad\qquad\qquad\qquad\qquad+\left\|\frac{\sin(rv(0))}{r}\right\|_{H^{s-1}(\R^5)}\bigg)\\
 &\qquad\qquad\lesssim \left\|\frac{\sin(rv(0))}{r}\right\|_{H^{s-1}(\R^5)}+1,
\endaligned
\label{phiths-2}
\end{equation}
where the last line is motivated by an argument identical to the one producing \eqref{ta-sin2u}.

Due to $s>\frac{7}{2}$, $H^{s-1}(\R^5)$ is an algebra and, also factoring in \eqref{v0-hs} and \eqref{Moser}, we obtain
\begin{equation}
\aligned
\left\|\frac{\sin(rv(0))}{r}\right\|&_{H^{s-1}(\R^5)}\\
&\leq \|v(0)\|_{H^{s-1}( \R^5)}\left(\left\|\frac{\sin(rv(0))-rv(0)}{rv(0)}\right\|_{H^{s-1}(\R^5)}+1\right)\\
&\lesssim \left\|\frac{\sin(rv(0))-rv(0)}{rv(0)}\right\|_{H^{s-1}(\R^5)}+1\\
&\lesssim \gamma(\|r^2v^2(0)\|_{L^\infty( \R^5)})\,\|r^2v^2(0)\|_{H^{s-1}( \R^5)}+1,
\endaligned
\label{phiths-3}
\end{equation}
since 
\[
\frac{\sin(x)-x}{x}=H(x^2)
\]
for a function $H\in C^\infty(\R;\R)$ with $H(0)=0$. However, as $v(0)\in H^1({\R^5})$, we can infer according to \eqref{rad-Sob-2} that
\begin{equation*}
\|r^2v(0)\|_{L^\infty( \R^5)}+\|D^{s-1}r\, r\,v(0)\|_{L^\infty( \R^5)}\lesssim 1,
\end{equation*}
which, jointly with \eqref{Lbnz-0}, \eqref{v0-hs}, and \eqref{dv0-li}, yields
\[
\|r^2v^2(0)\|_{L^\infty( \R^5)}\lesssim 1
\]
and
\[
\aligned
\|r^2v^2(0)\|_{H^{s-1}( \R^5)}&\sim \|r^2v^2(0)\|_{L^2( \R^5)}+\|D^{s-1}\left(r^2v^2(0)\right)\|_{L^{2}( \R^5)}\\
&\lesssim\|r^2v(0)\|_{L^\infty( \R^5)}\|v(0)\|_{L^2( \R^5)}+\|r^2v(0)\|_{L^\infty( \R^5)}\|v(0)\|_{\dot{H}^{s-1}( \R^5)}\\
&\quad+\|D^{s-1}r\, r\,v(0)\|_{L^\infty( \R^5)}\|v(0)\|_{L^2( \R^5)}\\
&\lesssim 1.
\endaligned
\]
Therefore, by also relying on \eqref{phiths-1}-\eqref{phiths-3}, we have that
\begin{equation}
\|\Phi_t(0)\|_{\dot{H}^{s-1}( \R^5)}\lesssim 1.
\label{phit0-hs-1}
\end{equation}

In order to finish the argument, we need to prove the finiteness of the $\dot{H}^{s-2}( \R^5)$ norm in \eqref{phit0-hs} and, for this purpose, we start by analyzing the second term on the right-hand side of \eqref{phit-phitt}. An application of \eqref{Lbnz-0} coupled with \eqref{v0-hs}, \eqref{dv0-li}, \eqref{ta-sin2u}, and the obvious bound
\[
0<\tA^{-1/2}(r,v(0))\leq 1,
\]
yields 
\begin{equation*}
\aligned
&\left\|\tA^{-1/2}(r,v(0))\,\frac{\sin(2u(0))}{r}\,v^2_{t}(0)\right\|_{\dot{H}^{s-2}( \R^5)}\\
&\qquad\lesssim \left\|\tA^{-1/2}(r,v(0))\right\|_{\dot{H}^{s-2}( \R^5)}\left\|\frac{\sin(2u(0))}{r}\right\|_{L^\infty(\R^5)}\left\|v_{t}(0)\right\|^2_{L^\infty( \R^5)}\\
&\qquad\quad+\left\|\tA^{-1/2}(r,v(0))\right\|_{L^\infty( \R^5)}\left\|\frac{\sin(2u(0))}{r}\right\|_{\dot{H}^{s-2}( \R^5)}\left\|v_{t}(0)\right\|^2_{L^\infty( \R^5)}\\
&\qquad\quad+\left\|\tA^{-1/2}(r,v(0))\right\|_{L^\infty( \R^5)}\left\|\frac{\sin(2u(0))}{r}\right\|_{L^\infty( \R^5)}\left\|v_{t}(0)\right\|_{L^{\infty}( \R^5)}\left\|v_{t}(0)\right\|_{\dot{H}^{s-2}( \R^5)}\\
&\qquad\lesssim \left\|\tA^{-1/2}(r,v(0))\right\|_{\dot{H}^{s-2}( \R^5)}+\left\|\frac{\sin(2u(0))}{r}\right\|_{\dot{H}^{s-2}( \R^5)}+1.
\endaligned
\end{equation*}
However, an approach identical to the one producing \eqref{phit0-hs-1} leads to 
\[
\left\|\tA^{-1/2}(r,v(0))\right\|_{H^{s-1}( \R^5)}+\left\|\frac{\sin(2u(0))}{r}\right\|_{H^{s-1}( \R^5)}\lesssim 1
\]
and, subsequently, 
\[
\left\|\tA^{-1/2}(r,v(0))\,\frac{\sin(2u(0))}{r}\,v^2_{t}(0)\right\|_{\dot{H}^{s-2}( \R^5)}\lesssim 1.
\]
Lastly, we investigate the first term on the right-hand side of \eqref{phit-phitt}, and yet another usage of  \eqref{Lbnz-0} jointly with the Sobolev embeddings \eqref{Sob-gen}, \eqref{ta-sin2u}, \eqref{vtt-0}, and the analysis deriving \eqref{phit0-hs-1} implies
\begin{equation*}
\aligned
\left\|\tA^{1/2}(r,v(0))\,v_{tt}(0)\right\|_{\dot{H}^{s-2}( \R^5)}&\lesssim \left\|\tA^{1/2}(r,v(0))\right\|_{\dot{H}^{s-2, 10/3}( \R^5)}\left\|v_{tt}(0)\right\|_{L^5( \R^5)}\\
&\quad+\left\|\tA^{1/2}(r,v(0))\right\|_{L^\infty( \R^5)}\left\|v_{tt}(0)\right\|_{\dot{H}^{s-2}( \R^5)}\\
&\lesssim \left\|\tA^{1/2}(r,v(0))\right\|_{\dot{H}^{s-1}( \R^5)}\left\|v_{tt}(0)\right\|_{H^{s-2}( \R^5)}\\
&\quad+\left\|v_{tt}(0)\right\|_{\dot{H}^{s-2}( \R^5)}\\
&\lesssim \left\|v_{tt}(0)\right\|_{\dot{H}^{s-2}( \R^5)}+1.
\endaligned
\end{equation*}
Now, we rely on \eqref{v0-hs} to infer
\[
\aligned
 \left\|v_{tt}(0)\right\|_{\dot{H}^{s-2}( \R^5)}&\lesssim \left\|\Delta v(0)\right\|_{\dot{H}^{s-2}( \R^5)}+\left\|\Box v(0)\right\|_{\dot{H}^{s-2}( \R^5)}\\
 &\lesssim \left\|v(0)\right\|_{\dot{H}^{s}( \R^5)}+\left\|\Box v(0)\right\|_{\dot{H}^{s-2}( \R^5)}\\
 &\lesssim \left\|\Box v(0)\right\|_{\dot{H}^{s-2}( \R^5)}+1
 \endaligned
\]
and we argue that, following the blueprint of the analysis for the $\dot{H}^{s-1}( \R^5)$, one also obtains
\[
\left\|\Box v(0)\right\|_{\dot{H}^{s-2}( \R^5)}\lesssim 1.
\]
We let the avid reader fill in the details. The end result is that the last four estimates together give
\begin{equation}
\|\Phi_{tt}(0)\|_{\dot{H}^{s-2}( \R^5)}\lesssim 1
\label{phit0-hs-2}
\end{equation}
and the proof of \eqref{phit0-hs} is finished.
\end{proof}

\bibliographystyle{amsplain}
\bibliography{anwb-recent}

\end{document}